\let\citationorig\citation
\def\citation#1{\citationorig{#1}\@for\@tempa:=#1\do{\@ifundefined{cit@\@tempa}{\global\@namedef{cit@\@tempa}{}}{}}}
\let\bibitemorig\bibitem
\def\bibitem#1{\@ifundefined{cit@#1}{\typeout{LaTeX Warning: Unused bibitem `#1'}}{}\bibitemorig{#1}}
\let\old@setaddresses\@setaddresses
\def\@setaddresses{\bigskip{\parindent 0pt\let\scshape\relax\let\ttfamily\relax\old@setaddresses}}
\renewenvironment{enumerate}{\begin{enumorig}[label=\textup{(\arabic*)}, noitemsep, topsep=\medskipamount]}{\end{enumorig}}
\newenvironment{enumerate'}{\begin{enumorig}[label=\textup{(\arabic*$'$)}, noitemsep, topsep=\medskipamount]}{\end{enumorig}}
\newenvironment{enumerate''}{\begin{enumorig}[label=\textup{(\arabic*$''$)}, noitemsep, topsep=\medskipamount]}{\end{enumorig}}
\newenvironment{enumeratea}{\begin{enumorig}[label=\textup{(\alph*)}, noitemsep, topsep=\medskipamount]}{\end{enumorig}}
\renewenvironment{itemize}{\begin{itemorig}[label=\textup\textbullet, noitemsep, topsep=\medskipamount, labelsep=.6em]}{\end{itemorig}}
\newtheorem{theorem}{Theorem}[section]
\newtheorem{proposition}[theorem]{Proposition}
\newtheorem{lemma}[theorem]{Lemma}
\newtheorem{corollary}[theorem]{Corollary}
\newtheorem{observation}[theorem]{Observation}
\theoremstyle{remark}
\newtheorem{example}[theorem]{Example}
\numberwithin{equation}{section}
\def\setN{\mathbb{N}}
\def\calB{\mathcal{B}}
\def\calC{\mathcal{C}}
\def\calG{\mathcal{G}}
\def\calM{\mathcal{M}}
\def\calS{\mathcal{S}}
\def\size#1{\lvert#1\rvert}
\def\link#1#2{#1\{#2\}}
\def\balance{w^\star}
\newcommand{\superimpose}[2]{\ooalign{$#1\@firstoftwo#2$\cr\hfil$#1\@secondoftwo#2$\hfil\cr}}
\def\widecup{\mathbin{\mathpalette\superimpose{{\hphantom{+}}{\cup}}}}
\def\tbigcup{\mathop{\textstyle\bigcup}\nolimits}
\let\leq\leqslant
\let\geq\geqslant
\let\setminus\smallsetminus
\let\subset\subseteq
\let\supset\supseteq
\let\Theta\varTheta
\let\Omega\varOmega
\title[Graph sharing game and graphs with a forbidden subdivision]{Graph sharing game and the structure of weighted graphs with a forbidden subdivision}
\author{Adam Gągol\and Piotr Micek\and Bartosz Walczak}
\address{Department of Theoretical Computer Science, Faculty of Mathematics and Computer Science, Jagiellonian University, Kraków, Poland}
\email{\href{mailto:gagol@tcs.uj.edu.pl}{gagol@tcs.uj.edu.pl}, \href{mailto:micek@tcs.uj.edu.pl}{micek@tcs.uj.edu.pl}, \href{mailto:walczak@tcs.uj.edu.pl}{walczak@tcs.uj.edu.pl}}
\thanks{A journal version of this paper appeared in \href{http://doi.org/10.1002/jgt.22045}{\emph{J. Graph Theory} 85~(1), 22--50, 2017}.}
\thanks{Adam Gągol and Bartosz Walczak were partially supported by National Science Center of Poland under grant 2011/03/N/ST6/03111.}
\begin{document}

\settowidth\leftmargini{(1$''$)\hskip\labelsep}

\begin{abstract}
In the \emph{graph sharing game}, two players share a connected graph $G$ with non-negative weights assigned to the vertices, claiming and collecting the vertices of $G$ one by one, while keeping the set of all claimed vertices connected through the whole game.
Each player wants to maximize the total weight of the vertices they have gathered by the end of the game, when the whole $G$ has been claimed.
It is proved that for any class $\mathcal{G}$ of graphs with an odd number of vertices and with forbidden subdivision of a fixed graph (e.g., for the class $\mathcal{G}$ of planar graphs with an odd number of vertices), there is a constant $c_{\mathcal{G}}>0$ such that the first player can secure at least the $c_{\mathcal{G}}$ proportion of the total weight of $G$ whenever $G\in\mathcal{G}$.
Known examples show that such a constant does no longer exist if any of the two conditions on the class $\mathcal{G}$ (an odd number of vertices or a forbidden subdivision) is removed.
The main ingredient in the proof is a new structural result on weighted graphs with a forbidden subdivision.
\end{abstract}

\maketitle

\section{Introduction}

The \emph{graph sharing game} is played by two players, Alice and Bob, on a connected graph $G$ with non-negative weights assigned to the vertices.
Starting with Alice, the players alternate in taking the vertices of $G$ one by one until the whole $G$ has been taken.
The rule is that the set of all taken vertices must induce a connected subgraph of $G$ through the whole game.
Each player wants to maximize the total weight of the vertices they have gathered by the end.

The above is one of the two graph sharing games introduced by Cibulka, Kynčl, Mészáros, Stolař, and Valtr \cite{CKM+13} and independently by Micek and Walczak \cite{MiW11,MiW12}.
They called it the graph sharing game with taken part connected or \emph{game T}\@.
The other game, called the graph sharing game with remaining part connected or \emph{game R}, differs from game T in that the remaining (non-taken) part of the graph must be connected instead of the taken part.
Both games originate from the ``pizza sharing game'' popularized by Peter Winkler, which is either game T or game R played on a cycle.
Cibulka et~al.\ \cite{CKM+10} and independently Knauer, Micek, and Ueckerdt \cite{KMU11} proved that Alice has a strategy to collect at least $4/9$ of the total weight of any pizza.
This bound is best possible (see Figure \ref{fig:4/9}).

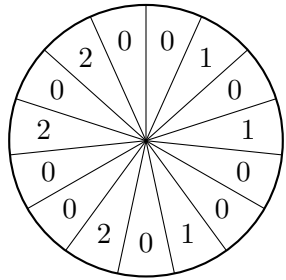
\begin{figure}[t]
\centering
\begin{tikzpicture}[scale=1.8]
  \draw[thick] (0,0) circle (1cm);
  \draw (0,0) -- ( 90:1cm) (102:7.5mm) node {$0$};
  \draw (0,0) -- (114:1cm) (126:7.5mm) node {$2$};
  \draw (0,0) -- (138:1cm) (150:7.5mm) node {$0$};
  \draw (0,0) -- (162:1cm) (174:7.5mm) node {$2$};
  \draw (0,0) -- (186:1cm) (198:7.5mm) node {$0$};
  \draw (0,0) -- (210:1cm) (222:7.5mm) node {$0$};
  \draw (0,0) -- (234:1cm) (246:7.5mm) node {$2$};
  \draw (0,0) -- (258:1cm) (270:7.5mm) node {$0$};
  \draw (0,0) -- (282:1cm) (294:7.5mm) node {$1$};
  \draw (0,0) -- (306:1cm) (318:7.5mm) node {$0$};
  \draw (0,0) -- (330:1cm) (342:7.5mm) node {$0$};
  \draw (0,0) -- (354:1cm) (  6:7.5mm) node {$1$};
  \draw (0,0) -- ( 18:1cm) ( 30:7.5mm) node {$0$};
  \draw (0,0) -- ( 42:1cm) ( 54:7.5mm) node {$1$};
  \draw (0,0) -- ( 66:1cm) ( 78:7.5mm) node {$0$};
\end{tikzpicture}
\caption{Alice can get at most $\frac{4}{9}$ of the pizza above playing against clever Bob.
Numbers stand for slice weights.}
\label{fig:4/9}
\end{figure}

Easy examples show that there is no hope in obtaining a similar result for either game T or game R on general graphs \cite{CKM+13,MiW11,MiW12}.
However, an appropriate restriction on the parity of the number of vertices and the structure of the graph can yield existence of good strategies of Alice.

For game T, Micek and Walczak \cite{MiW12} constructed very simple graphs (caterpillars and subdivided stars) with an even number of vertices and with arbitrarily small guaranteed outcome of Alice (see Example \ref{ex:hedgehog}).
On the other hand, they proved that Alice can always secure at least $1/4$ of the total weight playing on a tree with an odd number of vertices.
They also constructed a family of graphs with an odd number of vertices that are arbitrarily bad for Alice.
These graphs contain subdivisions of arbitrarily large cliques.
As the main result of this paper, we prove that these subdivisions are unavoidable.

\begin{theorem}
\label{thm:intro-game}
For every positive integer\/ $n$, there is\/ $c_n\in(0,1]$ such that if\/ $G$ is a weighted connected graph with an odd number of vertices and with no subdivision of\/ $K_n$, then Alice has a strategy to collect vertices of total weight at least\/ $c_nw(G)$ in the graph sharing game on\/ $G$ with taken part connected, where\/ $w(G)$ denotes the total weight of\/ $G$.
\end{theorem}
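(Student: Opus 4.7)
The plan is to derive Theorem~\ref{thm:intro-game} from a structural decomposition result for weighted graphs without a subdivision of $K_n$, and then to translate that decomposition into an explicit strategy for Alice. The structural statement, which I would prove by induction on $n$ (ultimately leveraging Mader-type bounds on average degree in $K_n$-subdivision-free graphs), should guarantee the existence of a bounded-size vertex set $S \subseteq V(G)$ (with $\size{S}$ depending only on $n$) together with a ``balancing vertex'' $v$, such that every component of $G - S$ carries at most a $(1 - \varepsilon_n)$-fraction of $w(G)$ and $v$ has enough influence on the neighboring components to serve as an entry point from which Alice can control the subgame. The odd-parity hypothesis on $\size{V(G)}$ would be preserved throughout the recursion by a careful choice of how $S$ and $v$ split the vertex count modulo $2$.

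Given such a decomposition, Alice's opening move would be $v$. The taken set then grows as a single connected subgraph, which Bob will naturally try to steer into the heaviest remaining component of $G - S$. Alice's response would follow a pairing strategy: she precomputes a near-perfect matching on $V(G)\setminus\{v\}$ (using the odd parity to leave exactly one designated vertex unmatched) such that, whenever Bob takes a vertex $u$, its partner $u'$ is either already on the boundary of the current taken set or becomes reachable after Alice adds a short connecting path. The matching would be built recursively inside each component of $G - S$ by applying the theorem to a suitably reweighted subinstance; the bounded size of $S$ limits the total loss incurred by unmatched or ``transit'' vertices to a constant fraction of $w(G)$, while the influence of $v$ bounds the loss coming from vertices through which Alice must route to reach $u'$.

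The main obstacle I foresee is the interaction between the pairing and the connectivity constraint inherent in game~T\@. In the classical pairing arguments (the pizza game on a cycle, or Micek--Walczak's tree argument), the ambient graph has a natural notion of ``opposite side'' which makes a vertex's partner automatically accessible once the partner on Bob's side is consumed. In a general graph no such structure exists, and it is precisely the forbidden subdivision of $K_n$ that is meant to supply a weakened version of it, via the existence of small balanced separators at every scale. Quantifying the loss at each level of the recursion, handling the vertices of $S$ (which can become unreachable if the taken set grows in the wrong direction), and ensuring that the recursive subgames inherit the odd-parity property needed to keep Alice's advantage are the delicate points where I expect most of the technical work to concentrate; together they will determine the dependence of the constant $c_n$ on $n$.
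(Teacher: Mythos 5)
Your proposal rests on a structural claim that is false for the class of graphs in question: a set $S$ of size bounded in terms of $n$ alone such that every component of $G\setminus S$ carries at most a $(1-\varepsilon_n)$-fraction of $w(G)$ need not exist in a graph with no subdivision of $K_n$. Take the $k\times k$ grid with unit weights on all vertices: it is planar, hence contains no subdivision of $K_5$, yet deleting any set of bounded size leaves a single component containing almost all of the weight (balanced separators of bounded size would force bounded treewidth, which grids do not have). The paper's structural result, Theorem \ref{thm:intro-struct}, is of a genuinely different shape: the set $S$ may be arbitrarily large (it is only required to be connected, and what is measured is $\balance(G\setminus S)$, the weight outside the heaviest component), and the alternative outcome is not a ``balancing vertex'' but a cycle-like reduction $\link{G}{S}$ carrying substantial weight. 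So the decomposition you plan to induct on cannot be proved, and the recursion built on it does not get off the ground.

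The second gap is that the strategy layer is not actually constructed. You correctly identify the central difficulty---that pairing arguments clash with the connectivity constraint of game T---but you offer no mechanism to resolve it, and the paper indeed does not use a matching at all. Its strategies are: (i) the Micek--Walczak parity lemma (Lemma \ref{lem:strat-comp}), by which Alice secures $\tfrac12\balance(G\setminus T)$ from any position with an even number of taken vertices; (ii) a parity-counting strategy on the cyclic structure (Lemma \ref{lem:strat-cycle-R}); and, crucially, (iii) a reduction to \emph{sparsely weighted} graphs, obtained by contracting the closed neighborhoods of positive-weight vertices into a $1$-shallow minor $G^R$ (which still excludes a subdivision of some $K_N$ by Ne\v{s}et\v{r}il--Ossona de Mendez), precisely so that Bob cannot poach heavy vertices while Alice assembles her connected set; the remaining case, where no sparse set is heavy, is handled by greedy strategies whose analysis depends on bounded arrangeability (R\"odl--Thomas). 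None of these ingredients appears in your sketch, and the loss estimates you appeal to (``bounded size of $S$ limits the total loss,'' ``the influence of $v$ bounds the loss'') have no quantitative content once the bounded-size separator is unavailable. As it stands the proposal is not a proof outline that could be completed along the indicated lines.
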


Our proof of Theorem \ref{thm:intro-game} gives $c_n=\Omega(2^{-p(n)})$ for some polynomial $p$, and we make no effort to optimize this bound.
Gągol \cite{Gag-master} proved that if $G$ is a connected graph with an odd number of vertices, with weights $0$ or $1$ on the vertices, and with no $K_n$ minor, then Alice can guarantee herself at least $\Omega(n^{-3}\log^{-3/2}n)$ of the total weight in game T on $G$.
His argument is tailored for that special case and is unlikely to generalize.
Still, we expect the optimum value of $c_n$ in Theorem \ref{thm:intro-game} to be of order $\Omega(n^{-\alpha})$ for some $\alpha\geq 1$.
The optimum value of $c_3$, which is the maximum proportion of the total weight Alice can secure on odd trees, lies between $1/4$ and $2/5$ \cite{MiW12}.

We also present an example (Example \ref{ex:bounded-exp}) illustrating that the forbidden subdivision condition in Theorem \ref{thm:intro-game} cannot be replaced by bounded expansion, which is the next restriction on a class of graphs (weaker than that of a forbidden subdivision) in the taxonomy of sparse graph classes due to Nešetřil and Ossona de Mendez \cite{NeO-book}.

The main ingredient in the proof of Theorem \ref{thm:intro-game} is the following structural result, which may be of independent interest.

\begin{theorem}
\label{thm:intro-struct}
For every positive integer\/ $n$, there is\/ $c_n\in(0,1]$ such that if\/ $G$ is a weighted connected graph with no subdivision of\/ $K_n$, then at least one of the following holds:
\begin{enumerate}
\item There is a connected set\/ $S\subset V(G)$ such that the total weight of all components of\/ $G\setminus S$ other than the heaviest one is at least\/ $c_nw(G)$.
\item There are a set\/ $S\subset V(G)$ with\/ $w(S)\geq c_nw(G)$ and a cyclic ordering of\/ $S$ such that the neighborhood of every component of\/ $G\setminus S$ consists of either a single vertex in\/ $S$ or two vertices in\/ $S$ consecutive in the cyclic order.
\end{enumerate}
In the above, $w(G)$ and\/ $w(S)$ denote the total weights of\/ $G$ and\/ $S$, respectively.
\end{theorem}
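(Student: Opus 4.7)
The plan is to proceed by induction on $|V(G)|$, first reducing to the 2-connected case and then analysing a 2-connected $G$ via its ear decomposition. I would normalise $w(G)=1$ and write $c$ for a constant depending only on $n$, to be fixed at the end of the argument.

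First I would reduce to 2-connected $G$. Suppose $G$ has a cut vertex $v$: if every component of $G\setminus\{v\}$ has weight at most $1-c$, then the other components together have weight at least $c$, so conclusion~(1) holds with $S=\{v\}$. Otherwise a unique heavy component $H$ of $G\setminus\{v\}$ has weight greater than $1-c$, and I apply the theorem inductively to $G[V(H)\cup\{v\}]$, which is connected, has no subdivision of $K_n$, and carries almost all of the weight. A structure for this smaller graph lifts back to a structure for $G$ with at most a constant-factor loss in $c$. Iterating, we may assume $G$ is 2-connected.

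For 2-connected $G$, I would fix an ear decomposition $G=P_0\cup P_1\cup\cdots\cup P_k$ with $P_0$ a cycle and each $P_i$ ($i\geq 1$) a path whose endpoints lie in the union of the previous ears. The plan is to start with $S=V(P_0)$ equipped with the cyclic order traversing $P_0$, and to process the ears one by one. An ear whose two endpoints are currently consecutive in the cyclic order on $S$ can be absorbed harmlessly: inserting its internal vertices between those endpoints in the cyclic order preserves conclusion~(2) for the enlarged $S$. An ear whose endpoints are not consecutive acts as a chord; absorbing it naively would break the cyclic structure, so such ears have to be handled separately.

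The technical crux is showing that, because $G$ contains no subdivision of $K_n$, the number of non-consecutive ears arising across the process is bounded by a function of $n$. The rough idea is that many disjoint chord-like ears on a cycle can be combined with cycle segments to produce $n$ branch vertices joined pairwise by internally vertex-disjoint paths, giving a forbidden $K_n$ subdivision; a Ramsey-type analysis of the crossing pattern of the chord ears together with extremal results of Bollob\'as--Thomason and Koml\'os--Szemer\'edi on clique subdivisions in dense graphs should make this quantitative. With only boundedly many non-consecutive ears, I would absorb them wholesale into $S$ and argue that the remaining components of $G\setminus S$ --- interiors of the consecutive ears together with the light pieces set aside during the 2-connectivity reduction --- all attach to a single vertex or to a consecutive pair of $S$, yielding conclusion~(2). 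The weight bound $w(S)\geq cw(G)$ holds unless a component of $G\setminus S$ is heavy, in which case conclusion~(1) applies directly or the induction recurses into that heavy component. The main obstacle is this quantitative bound on non-consecutive ears: extracting an explicit $K_n$ subdivision from many chord-like paths requires careful routing through cycle segments and previously absorbed ears, and this routing must be made compatible both with the inductive structures attached beneath the ears and with the weight distribution.
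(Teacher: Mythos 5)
Your plan founders on its stated ``technical crux'': the claim that, because $G$ has no subdivision of $K_n$, the number of ears with non-consecutive endpoints encountered during the process is bounded by a function of $n$. This is false. Take $G=K_{2,m}$ with parts $\{u,v\}$ and $\{x_1,\dots,x_m\}$: it contains no subdivision of $K_4$ (only $u$ and $v$ have degree exceeding $2$), yet in any ear decomposition starting from a cycle $u x_1 v x_2 u$, every one of the remaining $m-2$ ears is a path from $u$ to $v$, and $u,v$ are non-consecutive in the cyclic order from the very start. The Ramsey/extremal route you propose cannot repair this, because a $K_n$ subdivision needs $n$ \emph{distinct} branch vertices, and arbitrarily many chord-like ears may reuse the same two attachment vertices without creating any dense topological structure --- this is exactly the obstruction emphasized in the paper (Jung's observation that $K_{m,m}$ has no $K_n$ subdivision unless $m=\Omega(n^2)$, and the examples $H_n$). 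Note also that in this example, with the weight spread over the $x_i$, the correct outcome is conclusion (1) with a connected set such as $\{u,x_1,v\}$, which your procedure (built around absorbing ears into a cyclically ordered $S$) would never produce. The paper's proof does not count ears at all; the forbidden subdivision enters through Lemma \ref{lem:struct-indsubdiv}, a weighted induction on the number of edges of a target $n$-vertex graph, whose BFS-layer argument is precisely what prevents the attachments from collapsing onto few vertices, and which is combined with the separate Hamiltonian-cycle analysis of Lemmas \ref{lem:struct-hamil1} and \ref{lem:struct-hamil2} via the intermediate ``heavy neighborhood'' alternative.

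A second, independent gap is the weight accounting for conclusion (2). Your $S$ is chosen with no regard to the weight function, and your fallback --- ``the weight bound $w(S)\geq cw(G)$ holds unless a component of $G\setminus S$ is heavy, in which case conclusion (1) applies directly or the induction recurses'' --- is not a valid dichotomy: the weight may be spread over many components of $G\setminus S$, each attached to a consecutive pair, so that $w(S)$ is negligible, no single component is heavy, and (1) does not follow (recall that $\balance$ discounts the heaviest component, so one heavy component never by itself yields (1)). The paper spends Lemmas \ref{lem:struct-cycle}, \ref{lem:struct-hamil1} and \ref{lem:struct-hamil2} on exactly this issue, re-choosing the cycle and the separating set in a weight-aware manner. (A smaller point: your cut-vertex reduction discards a fraction of the weight \emph{of the current graph} at each step, and iterating this over many cut vertices compounds multiplicatively, so ``constant-factor loss'' needs a more careful block-tree argument; but this is repairable, unlike the two issues above.)
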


Our proof of Theorem \ref{thm:intro-struct} gives $c_n=\Omega(2^{-\smash{\binom{n}{2}}}n^{-1})$, and again we make no effort to optimize this bound.

For game R, the two parities of the number of vertices switch their roles.
Even very simple graphs with an odd number of vertices (like a $3$-vertex path with all the weight in the middle) can be very bad for Alice.
There also exist graphs with an even number of vertices and with arbitrarily small guaranteed outcome of Alice \cite{CKM+13,MiW11}.
On the other hand, Micek and Walczak \cite{MiW11} proved that Alice can secure $1/4$ of the total weight in game R played on a tree with an even number of vertices, and they conjectured that she can do as much as $1/2$.
This was proved by Seacrest and Seacrest \cite{SeS12}, who also conjectured that Alice can secure some positive constant proportion of the total weight in game R on all bipartite graphs with an even number of vertices.
However, no result of this kind is known for any natural class of graphs broader than the class of trees with an even number of vertices.

For both variants of the game, Cibulka et~al.\ \cite{CKM+13} described other constructions of graphs that are very bad for Alice, imposing various restrictions on the structure of the graph (e.g., $k$-connectivity).
They also investigated computational aspects of the game.
They proved that finding an optimal strategy in game R is PSPACE-complete in general.
Whether the same is true for game T is open.
Cibulka et~al.\ \cite{CKM+13} also asked about the complexity of finding an optimal strategy in games T and R on trees.
A polynomial-time algorithm for game T on trees was devised by Walczak \cite{Wal-phd}.
The problem for game R on trees remains open.

For the rest of the paper, we focus only on game T, which we simply call the graph sharing game, as it is defined in the first paragraph.
After setting up some graph-theoretic background in Section \ref{sec:background}, we review constructions of graphs with arbitrarily small guaranteed outcome of Alice in Section \ref{sec:examples}.
We prove Theorem \ref{thm:intro-struct} in Section \ref{sec:structural} (see Corollary \ref{cor:struct-subdiv}) and Theorem \ref{thm:intro-game} in Section \ref{sec:strategies}.

\section{Background}
\label{sec:background}

\subsection{Basic terminology and notation}

We let $\setN$ and $\setN^+$ denote the sets of non-negative integers and of positive integers, respectively.
We assume that the reader is familiar with basic terminology of graph theory.
Every graph that we consider is finite and has no loops or multiple edges.
The sets of vertices and edges of a graph $G$ are denoted by $V(G)$ and $E(G)$, respectively.
For a vertex $v\in V(G)$, we define
\begin{itemize}
\item $N_G(v)$ as the neighborhood of $v$ in $G$,
\item $N_G[v]$ as the closed neighborhood of $v$ in $G$, that is, the set $N_G(v)\cup\{v\}$.
\end{itemize}
For a set $S\subset V(G)$, we define
\begin{itemize}
\item $N_G(S)$ as the neighborhood of $S$ in $G$, that is, the set of vertices in $V(G)\setminus S$ adjacent to at least one vertex in $S$,
\item $N_G[S]$ as the closed neighborhood of $S$ in $G$, that is, the set $N_G(S)\cup S$,
\item $G[S]$ as the subgraph of $G$ induced on $S$,
\item $G\cap S=G[V(G)\cap S]$ and $G\setminus S=G[V(G)\setminus S]$.
\end{itemize}
We omit the subscript $G$ in $N_G$ when the graph $G$ is clear from the context.

A \emph{weighted graph} is a graph $G$ equipped with a function $w_G\colon V(G)\to[0,\infty)$ that assigns a \emph{weight} to each vertex of $G$.
If $S\subset V(G)$, then $w_G(S)$ denotes the sum of the weights of the vertices in $S$.
We omit the subscript $G$ in $w_G$ when the graph $G$ is clear from the context.
We define $w(G)=w_G(V(G))$.

A \emph{component} of $G$ is a maximal connected subgraph of $G$.
The family of components of $G$ is denoted by $\calC(G)$.
The sets $V(C)$ over all $C\in\calC(G)$ form a partition of $V(G)$.
For a weighted graph $G$, we define
\begin{equation*}
\balance(G)=w(G)-\max_{C\in\calC(G)}w(C).
\end{equation*}
The quantity $\balance(G)$ is essential for our considerations in Sections \ref{sec:structural} and \ref{sec:strategies}.
It is large unless most of the weight of $G$ lies in only one component of $G$.
When $G$ is connected, $\balance(G)=0$.

If $\calS$ is a partition of $V(G)$ into non-empty connected subsets, then $G/\calS$ denotes the graph with vertex set $\calS$ and edge set defined as follows: $XY\in E(G/\calS)$ if and only if there are $x\in X$ and $y\in Y$ such that $xy\in E(G)$.

The \emph{reduction} of a weighted graph $G$ to a set $S\subset V(G)$, denoted by $\link{G}{S}$, is the graph with vertex set $S$, edge set defined so that $uv\in E(\link{G}{S})$ if and only if there is a path $P$ in $G$ connecting $u$ and $v$ internally disjoint from $S$ (such that $V(P)\cap S=\{u,v\}$), and weight function $w_G$ restricted to $S$.
See Figure \ref{fig:link} for an illustration.
If $G$ is connected, then $\link{G}{S}$ is connected for every $S$.

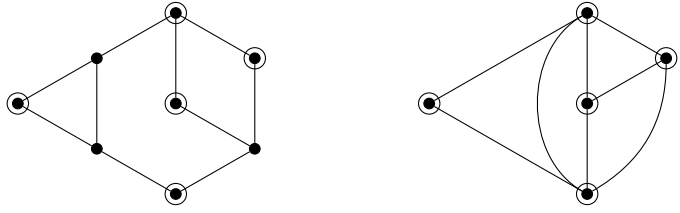
\begin{figure}[t]
\centering
\begin{tikzpicture}[scale=1.2]
  \tikzstyle{every node}=[circle,draw,minimum size=8pt,inner sep=0pt]
  \draw (  0,0  ) node {}
        (  0,1  ) node {}
        ( 30:1cm) node {}
        (  0,-1 ) node {}
       +(150:2cm) node {};
  \tikzstyle{every node}=[circle,draw,fill,minimum size=4pt,inner sep=0pt]
  \draw (   0,0  ) node (a) {}
        (   0,1  ) node (b) {}
        (  30:1cm) node (c) {}
        (   0,-1 ) node (d) {}
       +( 150:2cm) node (e) {}
        ( -30:1cm) node (f) {}
        (-150:1cm) node (g) {}
        ( 150:1cm) node (h) {};
  \path (a) edge (b) edge (f)
        (b) edge (c) edge (h)
        (c) edge (f)
        (d) edge (f) edge (g)
        (e) edge (g) edge (h)
        (g) edge (h);
\end{tikzpicture}\hskip 2cm
\begin{tikzpicture}[scale=1.2]
  \tikzstyle{every node}=[circle,draw,minimum size=8pt,inner sep=0pt]
  \draw (  0,0  ) node {}
        (  0,1  ) node {}
        ( 30:1cm) node {}
        (  0,-1 ) node {}
       +(150:2cm) node {};
  \tikzstyle{every node}=[circle,draw,fill,minimum size=4pt,inner sep=0pt]
  \draw (   0,0  ) node (a) {}
        (   0,1  ) node (b) {}
        (  30:1cm) node (c) {}
        (   0,-1 ) node (d) {}
       +( 150:2cm) node (e) {};
  \path (a) edge (b) edge (c) edge (d)
        (b) edge (c) edge[bend right=60] (d) edge (e)
        (c) edge[bend left=30] (d)
        (d) edge (e);
\end{tikzpicture}
\caption{Left: a graph $G$, a set $S$ of vertices circled; right: $\link{G}{S}$}
\label{fig:link}
\end{figure}

We use a special definition of \emph{cycles}, which differs from the standard one as follows: a graph consisting of a single vertex is a cycle of length $1$, while a graph with two vertices joined by an edge is a cycle of length $2$.
This saves us from considering special degenerate cases in Sections \ref{sec:structural} and \ref{sec:strategies}.

\subsection{Subdivisions and shallow minors}
\label{sec:subdiv-minors}

To \emph{subdivide} an edge $uv$ of a graph $H$ is to replace the edge $uv$ by a path $P_{uv}$ between $u$ and $v$ of any length passing through new vertices that do not belong to $V(H)$.
A graph $G$ is a \emph{subdivision} of a graph $H$ if $G$ arises from $H$ by subdividing edges, that is, replacing edges $uv\in E(H)$ by paths $P_{uv}$ that are internally disjoint from $V(H)$ and from each other.
The relation of being a subdivision is transitive, that is, if $F$ is a subdivision of $G$ and $G$ is a subdivision of $H$ then $F$ is also a subdivision of $H$.
We say that a graph $G$ contains a subdivision of $K_n$ if $G$ has a subgraph that is a subdivision of the complete graph on $n$ vertices.

\begin{theorem}[Mader {\cite[Satz 2]{Mad67}}]
\label{thm:mader}
For every\/ $n\in\setN^+$, there is\/ $d_n\in\setN$ such that every graph\/ $G$ with no subdivision of\/ $K_n$ has a vertex of degree at most\/ $d_n$.
\end{theorem}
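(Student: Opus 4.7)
The plan is to prove the contrapositive by induction on $n$: there exists $d_n\in\setN$ such that every graph $G$ of minimum degree greater than $d_n$ contains a subdivision of $K_n$. The theorem follows at once, for if $G$ has no subdivision of $K_n$, then $\delta(G)\leq d_n$, so some vertex has degree at most $d_n$. The base cases $n\in\{1,2\}$ hold trivially with $d_1=0$ and $d_2=1$.

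The induction rests on two classical tools. The first is the \emph{cleaning lemma}: every graph $G$ with $|E(G)|>d\cdot|V(G)|$ contains a non-empty subgraph of minimum degree greater than $d$, obtained by iteratively deleting vertices of degree at most $d$; each such deletion drops one vertex and at most $d$ edges, so the edge-to-vertex ratio stays above $d$ throughout and the process cannot empty the graph. The second is the fan version of Menger's theorem: in a $k$-connected graph, from any vertex to any set of $k$ other vertices there exist $k$ internally disjoint paths.

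For the inductive step, assume the claim holds at $n-1$ with constant $d_{n-1}$, and take $d_n$ to be a sufficiently large function of $d_{n-1}$ and $n$. Pick any vertex $v_0\in V(G)$ and, by applying the cleaning lemma to $G\setminus\{v_0\}$, extract a subgraph $H$ of minimum degree greater than $d_{n-1}$. By the inductive hypothesis, $H$ contains a subdivision $T$ of $K_{n-1}$ with branch vertices $u_1,\ldots,u_{n-1}$. It remains to attach $v_0$ to $u_1,\ldots,u_{n-1}$ via $n-1$ paths that are pairwise internally disjoint and internally disjoint from $V(T)\setminus\{u_1,\ldots,u_{n-1}\}$; together with $T$ they yield the desired subdivision of $K_n$.

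The heart of the argument, and its main obstacle, is this final fan-routing step: the subdivision $T$ may spread arbitrarily through $G$, and its removal could in principle destroy connectivity around $v_0$. The fix is to choose $d_n$ so large that, even after deleting the internal vertices of $T$, the part of $G$ near $v_0$ still has enough minimum degree to contain an $(n-1)$-connected region spanning $v_0$ and the $u_i$, at which point the fan version of Menger's theorem produces the required paths. Careful bookkeeping through this recursion yields $d_n$ roughly of order $2^{O(n^2)}$; far sharper bounds $d_n=O(n^2)$ are known due to Bollob\'as--Thomason and Koml\'os--Szemer\'edi, but any finite $d_n$ suffices for the applications in Sections~\ref{sec:structural} and~\ref{sec:strategies}.
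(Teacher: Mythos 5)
The paper itself does not prove Theorem \ref{thm:mader}; it cites Mader, so your attempt has to stand on its own, and it does not: the gap is exactly at the step you yourself call the heart of the argument. The inductive hypothesis hands you a subdivision $T$ of $K_{n-1}$ in $H=G\setminus\{v_0\}$ with no control whatsoever over where $T$ sits or how many vertices it uses. Its subdivision paths can be arbitrarily long and can pass through every neighbor of $v_0$, so deleting the interior of $T$ may isolate $v_0$ or separate it from the branch vertices $u_1,\ldots,u_{n-1}$. No choice of $d_n$ as a function of $n$ and $d_{n-1}$ can prevent this, because the size and location of $T$ are not bounded in terms of those quantities; ``choose $d_n$ so large'' has no purchase here. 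The appeal to the fan version of Menger is also unsupported: large minimum degree does not give large connectivity (a graph of huge minimum degree may have a cutvertex), and even granting that some $(n-1)$-connected subgraph survives near $v_0$, nothing guarantees it contains $v_0$ together with all of the specific vertices $u_1,\ldots,u_{n-1}$, which is what the fan lemma would need.

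The standard proofs are engineered precisely to avoid this relinking problem rather than to overpower it with a big constant. In the classical argument giving $d_n=2^{\binom{n}{2}}$ (Mader's bound, as presented in Diestel's book), one inducts on the number of edges of the target graph and, in the step, contracts a carefully chosen maximal connected set $U$ so that the contracted vertex dominates a neighborhood inducing a graph of large average degree; the new branch vertex is then joined to a subdivision found \emph{inside that neighborhood} by single edges and disjoint paths through $U$, so no Menger fan through an uncontrolled remainder is ever needed. The sharper $O(n^2)$ bounds of Bollob\'as--Thomason and Koml\'os--Szemer\'edi likewise fix a highly linked or highly connected subgraph \emph{before} any subdivision is located and do all routing inside it. As written, your inductive step does not go through; to repair it you would need to adopt a device of this kind (or otherwise gain control over where the $K_{n-1}$-subdivision lies) rather than merely increase $d_n$.
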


Komlós and Szemerédi \cite{KoS96} and independently Bollobás and Thomason \cite{BoT98} proved that the above holds with $d_n=O(n^2)$.
Jung \cite{Jun67} observed that the complete bipartite graph $K_{m,m}$ contains no subdivision of $K_n$ unless $m=\Omega(n^2)$.

A set $S\subset V(G)$ is \emph{$r$-shallow} if there is $v\in S$ such that every vertex in $S$ is within distance at most $r$ from $v$ in $G[S]$.
Every $r$-shallow subset of $V(G)$ is non-empty and connected.
A graph $H$ is an \emph{$r$-shallow minor} of $G$ if $H$ is isomorphic to a subgraph of $G/\calS$ for some partition $\calS$ of $V(G)$ into $r$-shallow subsets.
Equivalently, $H$ is an $r$-shallow minor of $G$ if there is a family $\{S(v)\}_{v\in V(H)}$ of pairwise disjoint $r$-shallow subsets of $V(G)$ such that there is an edge between $S(u)$ and $S(v)$ in $G$ whenever $uv\in E(H)$.
A graph $H$ is a \emph{minor} of $G$ if $H$ is an $r$-shallow minor of $G$ for some $r\in\setN$.
If $G$ contains a subdivision of a graph isomorphic to $H$, then $H$ is a minor of $G$ (such a graph $H$ is also called a \emph{topological minor} of $G$).

\begin{theorem}[Nešetřil, Ossona de Mendez {\cite[Lemma 9.8]{NeO06}}]
\label{thm:shallow-subdiv}
For every\/ $n\in\setN^+$, there is\/ $N\in\setN^+$ such that if\/ $G$ is a graph with no subdivision of\/ $K_n$, then every\/ $1$-shallow minor of\/ $G$ contains no subdivision of\/ $K_N$.
\end{theorem}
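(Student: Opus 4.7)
The plan is to argue the contrapositive: if a $1$-shallow minor $H$ of $G$ contains a subdivision of $K_N$, then $G$ contains a subdivision of $K_n$, provided $N$ is large enough in terms of $n$. Let $\{B_v\}_{v \in V(H)}$ be the $1$-shallow branch sets witnessing $H$ as a $1$-shallow minor of $G$, with chosen centers $c(v) \in B_v$ so that $B_v \subseteq N_G[c(v)]$; for each edge $uv \in E(H)$, fix an edge $\hat{e}_{uv}$ in $G$ with endpoints in $B_u$ and $B_v$. Let $t_1, \ldots, t_N$ be the branch vertices of a subdivision of $K_N$ in $H$, with internally disjoint paths $P_{ij}$ for $i \neq j$. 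The natural attempt is to use the centers $c(t_1), \ldots, c(t_N)$ as the branch vertices of a subdivision of $K_N$ in $G$, and for each pair $\{i,j\}$ to follow $P_{ij}$ by a walk $\Pi_{ij}$ in $G$ that makes a length-$\leq 1$ star-hop inside $B_{t_i}$ from $c(t_i)$ to the $B_{t_i}$-endpoint of the first bridge edge, crosses that bridge, then passes through every subsequent internal branch set via its center, and finishes at $c(t_j)$. Internal vertices of $P_{ij}$ belong to only one path each by internal disjointness, so the walks can collide only inside the branch sets $B_{t_i}$ with $i \in [N]$.

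Define the \emph{exit map} $\phi_i \colon [N] \setminus \{i\} \to B_{t_i}$ by sending $j$ to the $B_{t_i}$-endpoint of the first bridge edge of $\Pi_{ij}$. The walks $\Pi_{ij}$ with common endpoint $t_i$ are internally disjoint inside $B_{t_i}$ if and only if $\phi_i$ is injective on the chosen index set. The key observation is the following: since the second vertices of the paths $P_{ij}$ are pairwise distinct in $V(H)$ by internal disjointness, any $x \in B_{t_i}$ with $|\phi_i^{-1}(x)| \geq k$ has at least $k$ pairwise distinct neighbors in $G$, one in each of $k$ distinct branch sets, so $\deg_G(x) \geq k$. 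In particular, a large fiber of $\phi_i$ forces a high-degree vertex of $G$. Fix a parameter $D = D(n)$ and call an index $i \in [N]$ \emph{concentrated} if some fiber of $\phi_i$ has size at least $D$, and \emph{spread} otherwise; in the spread case, the image of $\phi_i$ has size at least $(N-1)/D$.

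If at least half of the indices are concentrated, then $G$ contains $N/2$ pairwise distinct vertices of degree at least $D$ (one witness per $B_{t_i}$, the branch sets being pairwise disjoint); taking $D$ to be a suitable polynomial in $n$ and extracting a subgraph of large average degree from these witnesses and their neighbors, the Koml\'os--Szemer\'edi / Bollob\'as--Thomason bound yields a subdivision of $K_n$ in $G$. Otherwise, at least half of the indices are spread; we iteratively build $T' \subseteq [N]$ of size $n$ by picking indices one by one, after each pick pruning the candidate pool to retain only those $j$ that do not create a collision at any previously chosen index $i_k$ (by avoiding $\phi_{i_k}$-preimages of already used exit vertices), and re-pigeonholing to preserve spreadness at the latest choice. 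Each step shrinks the pool by a factor polynomial in $n$ and $D$, so for $N$ growing as a tower function of $n$ the pool remains non-empty after $n$ steps, giving the desired $T'$ and hence a subdivision of $K_n$ in $G$ with branch vertices $\{c(t_i) : i \in T'\}$. The hardest step is the simultaneous conflict resolution in the spread case; iterated pigeonhole (or equivalently a Ramsey-type argument applied to the $3$-uniform conflict hypergraph on $[N]$ whose edges are triples $\{i,j,k\}$ with $\phi_i(j) = \phi_i(k)$) resolves it at the cost of an enormously suboptimal $N(n)$, which is fine for the qualitative theorem.
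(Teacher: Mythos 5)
First, note that the paper does not prove this statement at all—it is imported verbatim from Ne\v{s}et\v{r}il and Ossona de Mendez \cite{NeO06}—so you are attempting to reprove a cited theorem rather than matching an argument in the text. Your setup (centers of the star-shaped branch sets as prospective branch vertices, routing each path $P_{ij}$ through bridge edges and star centers, observing that all collisions are confined to the branch sets $B_{t_i}$ of the $N$ branch vertices, and that a large fiber of the exit map $\phi_i$ forces a vertex with many neighbors in pairwise disjoint branch sets) is sound and is indeed the natural entry point. But the concentrated case, which is where the real content of the theorem lives, rests on a false implication: having $N/2$ vertices of degree at least $D$ does \emph{not} yield a subgraph of large average degree. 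A disjoint union of stars $K_{1,D}$ has arbitrarily many vertices of degree $D$, average degree below $2$, and not even a subdivision of $K_4$, so the Koml\'os--Szemer\'edi/Bollob\'as--Thomason bound cannot be invoked from "many high-degree witnesses" alone. What concentration actually gives you is, for each concentrated $i$, a spider rooted at the witness $x_i$ with $D$ internally disjoint legs ending at $D$ distinct centers; turning many such spiders into a $K_n$-subdivision (or into a dense shallow \emph{topological} structure contradicting the absence of a $K_n$-subdivision) requires handling collisions between legs of different spiders at shared target branch sets and shared routes, and no argument for this is supplied. This is precisely the step that makes the Ne\v{s}et\v{r}il--Ossona de Mendez result (bounding $\nabla_1$ in terms of topological parameters) nontrivial, so the gap is genuine and load-bearing, not a routine omission.

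Two secondary points. First, collisions at the center $c(t_i)$ are harmless (all paths incident to $t_i$ may pass through their common branch vertex), so the fiber over $c(t_i)$ should be excluded from the definition of "concentrated"; otherwise single-vertex branch sets are declared concentrated although they cause no obstruction, and your dichotomy misroutes exactly the easy instances into the broken case. Second, in the spread case your iterative pruning only forbids collisions at previously chosen indices $i_k$; it never enforces that $\phi_{i_m}$ is injective on the already-chosen set, i.e.\ it ignores collisions occurring at the newly chosen index among the old ones. This part is repairable: restricting to spread indices, the $3$-uniform conflict hypergraph (triples $\{i,j,k\}$ with $\phi_i(j)=\phi_i(k)$ different from $c(t_i)$) has at most roughly $N^2D$ edges, so a standard deletion argument gives an independent set of size about $\tfrac12\sqrt{N/D}$, and $N$ polynomial in $n$ and $D$ suffices—no tower-type growth is needed. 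But fixing the spread case does not rescue the proposal, because the concentrated case still lacks a valid argument.
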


Nešetřil and Ossona de Mendez proved the above with $N=2n^2-6n+8$.

The \emph{greatest reduced average degree} (\emph{grad}) of \emph{rank} $r$ of a graph $G$, denoted by $\nabla_r(G)$, is defined by
\begin{equation*}
\nabla_r(G)=\max_{H\in\calM_r(G)}\frac{\size{E(H)}}{\size{V(H)}},
\end{equation*}
where $\calM_r(G)$ denotes the class of all $r$-shallow minors of $G$.
It is clear that $\nabla_r(G)\leq\nabla_s(G)$ when $r\leq s$.
Theorem \ref{thm:mader} has the following equivalent formulation: for every $n\in\setN^+$, the graphs $G$ with no subdivision of $K_n$ have bounded $\nabla_0(G)$.
Since an $r$-shallow minor of $G$ can be obtained from $G$ by iteratively making a $1$-shallow minor $O(\log r)$ times, the above and Theorem \ref{thm:shallow-subdiv} imply that for any $n\in\setN^+$ and $r\in\setN$, the graphs $G$ with no subdivision of $K_n$ have bounded $\nabla_r(G)$.
A class of graphs $\calG$ has \emph{bounded expansion} if there is a function $f\colon\setN\to[0,\infty)$ such that $\nabla_r(G)\leq f(r)$ for any $G\in\calG$ and $r\in\setN$.
Hence, for every $n\in\setN^+$, the class of graphs with no subdivision of $K_n$ has bounded expansion.

\subsection{Arrangeability}
\label{subsec:arrangeable}

For a linear ordering $\pi$ of a set $S$, we define
\begin{equation*}
\pi^-(x)=\bigl\{y\in S\colon y<_\pi x\bigr\},\qquad\pi^+(x)=\bigl\{y\in S\colon y>_\pi x\bigr\}.
\end{equation*}
For a graph $G$, a linear ordering $\pi$ of $V(G)$, $v\in V(G)$, and $S\subset V(G)$, we define
\begin{gather*}
N_\pi^-(v)=N_G(v)\cap\pi^-(v),\qquad N_\pi^+(v)=N_G(v)\cap\pi^+(v),\\
N_\pi^-(S)=\tbigcup_{v\in S}N_\pi^-(v).
\end{gather*}
A graph $G$ is \emph{$p$-arrangeable} if there is a linear ordering $\pi$ of $V(G)$ with the following property: for every $v\in V(G)$, we have $\size{N_\pi^-(N_\pi^+(v))\cap\pi^-(v)}\leq p$.
The \emph{arrangeability} of $G$ is the minimum $p$ such that $G$ is $p$-arrangeable.
This parameter has been introduced by Chen and Schelp \cite{ChS93}, who proved that graphs with bounded arrangeability have linearly bounded Ramsey numbers.

\begin{observation}
\label{obs:arrangeable}
If\/ $G$ is a\/ $p$-arrangeable graph and\/ $\pi$ is a linear ordering of\/ $V(G)$ that satisfies the condition of the definition of\/ $p$-arrangeability above, then
\begin{enumerate}
\item\label{item:arrangeable-1} for every\/ $v\in V(G)$, we have\/ $\size{N_\pi^-(v)}\leq p+1$,
\item\label{item:arrangeable-2} for every\/ $v\in V(G)$, we have\/ $\size{N_\pi^-(N[v])\cap\pi^-(v)}\leq p^2+4p+2$.
\end{enumerate}
\end{observation}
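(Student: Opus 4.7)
The overall plan is to prove (1) first and then deduce (2) from (1) together with the $p$-arrangeability condition itself via a straightforward decomposition of $N[v]$.

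For part (1), the natural attempt of applying the $p$-arrangeability condition at $v$ fails, because it bounds only predecessors of $v$ reachable through a \emph{later} neighbor of $v$, not $N_\pi^-(v)$ itself. The fix is to apply the condition at the latest earlier neighbor instead. Concretely, assuming $N_\pi^-(v)$ is nonempty, I would let $u$ be the maximum of $N_\pi^-(v)$ with respect to $\pi$. Then $uv\in E(G)$ and $u<_\pi v$ give $v\in N_\pi^+(u)$, while every $w\in N_\pi^-(v)\setminus\{u\}$ satisfies $w<_\pi u$ by the maximality of $u$, hence $w\in N_\pi^-(N_\pi^+(u))\cap\pi^-(u)$. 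The $p$-arrangeability condition applied at $u$ then yields $\size{N_\pi^-(v)\setminus\{u\}}\leq p$, i.e., $\size{N_\pi^-(v)}\leq p+1$.

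For part (2), I would use the decomposition $N[v]=\{v\}\cup N_\pi^-(v)\cup N_\pi^+(v)$ to write
\[
N_\pi^-(N[v])\cap\pi^-(v)=N_\pi^-(v)\cup\bigl(N_\pi^-(N_\pi^-(v))\cap\pi^-(v)\bigr)\cup\bigl(N_\pi^-(N_\pi^+(v))\cap\pi^-(v)\bigr),
\]
and bound the three terms separately. The first has size at most $p+1$ by (1). The third has size at most $p$ directly from the definition of $p$-arrangeability at $v$. For the second, part (1) gives $\size{N_\pi^-(v)}\leq p+1$ and $\size{N_\pi^-(w)}\leq p+1$ for each $w\in N_\pi^-(v)$; since $N_\pi^-(w)\subseteq\pi^-(w)\subseteq\pi^-(v)$ for such $w$, the union $N_\pi^-(N_\pi^-(v))$ has at most $(p+1)^2$ elements and is automatically contained in $\pi^-(v)$. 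Adding the three contributions gives $(p+1)+p+(p+1)^2=p^2+4p+2$.

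The only real obstacle is the small trick in (1): the $p$-arrangeability condition must be applied at the latest vertex of $N_\pi^-(v)$ rather than at $v$ itself, so that the other earlier neighbors of $v$ land in $\pi^-(u)$ and can be captured. Once (1) is secured, (2) is pure bookkeeping over the partition of $N[v]$.
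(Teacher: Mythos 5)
Your proof is correct and follows essentially the same route as the paper: part (1) by applying the arrangeability condition at the $\pi$-greatest vertex of $N_\pi^-(v)$, and part (2) by the same decomposition of $N_\pi^-(N[v])\cap\pi^-(v)$ into the three pieces bounded by $p+1$, $(p+1)^2$, and $p$. No gaps.
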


\begin{proof}
Let $v\in V(G)$ and suppose $N_\pi^-(v)\neq\emptyset$.
Let $v'$ be the vertex in $N_\pi^-(v)$ that is greatest with respect to $\pi$.
The condition on $\pi$ applied to $v'$ yields $\size{N_\pi^-(v)\cap\pi^-(v')}\leq p$, which implies \ref{item:arrangeable-1}.
Now, for every $v\in V(G)$, we have
\begin{gather*}
\begin{alignedat}{7}
&N_\pi^-(N[v])\cap\pi^-(v)&&=&&N_\pi^-(N_\pi^-(v))&&\widecup{}&&N_\pi^-(v)&&\widecup{}&\mskip-1mu\bigl(&N_\pi^-(N_\pi^+(v))\cap\pi^-(v)\bigr),\\
\lvert&N_\pi^-(N[v])\cap\pi^-(v)\rvert&&\leq&\lvert&N_\pi^-(N_\pi^-(v))\rvert&&+{}&\lvert&N_\pi^-(v)\rvert&&+{}&\lvert&N_\pi^-(N_\pi^+(v))\cap\pi^-(v)\rvert\\
&&&\leq\hbox to 0pt{$(p+1)^2+(p+1)+p=p^2+4p+2,$\hss}
\end{alignedat}
\end{gather*}
where the first term is bounded using an iteration of \ref{item:arrangeable-1}.
This proves \ref{item:arrangeable-2}.
\end{proof}

\begin{theorem}[Rödl, Thomas \cite{RoT97}]
\label{thm:arrangeable}
For every\/ $n\in\setN^+$, there is\/ $p_n\in\setN$ such that every graph with no subdivision of\/ $K_n$ is\/ $p_n$-arrangeable.
\end{theorem}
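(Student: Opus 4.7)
My plan is to derive $p_n$-arrangeability from the bounded grad at rank~$1$ of graphs with no subdivision of~$K_n$, via a greedy forward construction of the ordering~$\pi$. First, combining Theorem~\ref{thm:shallow-subdiv} and Theorem~\ref{thm:mader}, I obtain a constant $D = D(n)$ with $\nabla_1(G) \leq D$ for every such $G$: by Theorem~\ref{thm:shallow-subdiv}, every $1$-shallow minor $H$ of $G$ has no subdivision of some $K_N = K_{N(n)}$, and since every subgraph of $H$ is itself a $1$-shallow minor of a subgraph of $G$ and hence also $K_N$-subdivision-free, Theorem~\ref{thm:mader} produces a vertex of degree at most $d_N$ in each such subgraph; this makes $H$ a $d_N$-degenerate graph, so $\lvert E(H)\rvert \leq d_N\lvert V(H)\rvert$ and $\nabla_1(G) \leq d_N$.

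Second, I build $\pi$ by forward greedy placement. At step~$i$, with $R = \{v_1,\ldots,v_{i-1}\}$ already placed, I pick $v_i \in V(G)\setminus R$ minimizing the \emph{forward cost}
\begin{equation*}
c(v;R) = \bigl\lvert\bigl\{u \in R \colon \text{there is } w \in V(G)\setminus(R\cup\{v\}) \text{ with } uw, vw \in E(G)\bigr\}\bigr\rvert.
\end{equation*}
Unwinding the definitions confirms that $c(v_i;R)$ equals the arrangeability count $\lvert N^-_\pi(N^+_\pi(v_i)) \cap \pi^-(v_i)\rvert$ in the final ordering, so $G$ is $p$-arrangeable whenever $\min_{v\in V(G)\setminus R}c(v;R) \leq p$ for every proper $R\subsetneq V(G)$.

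The central lemma is therefore: if $G$ has no subdivision of $K_n$, then for every proper $R\subsetneq V(G)$ some $v \in V(G)\setminus R$ has $c(v;R)$ bounded by a polynomial $p(D)$. I would argue by contradiction: if every $v \notin R$ had $c(v;R) > p$, summation yields very many cherries $u$--$w$--$v$ with $u\in R$ and $v,w\in V(G)\setminus R$. I would convert these into a dense $1$-shallow minor of $G$ by partitioning $V(G)$ so that each witness $w\in V(G)\setminus R$ together with a carefully chosen $R$-neighbor $\phi(w)\in N(w)\cap R$ forms a $1$-shallow part (with the remaining vertices as singletons); many of the cherries then yield distinct edges of the quotient, contradicting $\nabla_1(G)\leq D$.

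The main obstacle will be the contradiction step, because a single witness~$w$ may mediate many cherries but can belong to only one part of a $1$-shallow minor, so a naive contraction realizes only a limited fraction of the cherries as distinct minor edges. I would overcome this by splitting $V(G)\setminus R$ into witnesses of small and of large $R$-degree: low-$R$-degree witnesses can be handled by a Hall-type matching producing many distinct minor edges, while high-$R$-degree witnesses are controlled separately by applying the $\nabla_0$ bound to an auxiliary subgraph formed by them together with their $R$-neighborhoods. Combining the two cases should give $p(D)$ polynomial in~$D$, hence a $p_n$ polynomial in~$n$ via the existing bounds on $d_N$ and $N$.
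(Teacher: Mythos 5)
The paper does not actually prove this statement---it is quoted from R\"odl and Thomas, with the grad-based route you are attempting attributed to Ne\v{s}et\v{r}il and Ossona de Mendez---so I judge your plan on its own merits. Your first step is sound: by Theorem \ref{thm:shallow-subdiv} every $1$-shallow minor of $G$ (and each of its subgraphs, which are again $1$-shallow minors) has no subdivision of $K_N$, so by Theorem \ref{thm:mader} it is $d_N$-degenerate and $\nabla_1(G)\leq d_N$. Your identity $c(v_i;R)=\lvert N^-_\pi(N^+_\pi(v_i))\cap\pi^-(v_i)\rvert$ is also correct. The genuine gap is that your central lemma is \emph{false}, so no contradiction argument (Hall-type matching, auxiliary subgraph, or otherwise) can establish it. Take the double star: adjacent hubs $w_1,w_2$, with $w_1$ adjacent to leaves $a_1,\dots,a_m$ and $w_2$ adjacent to leaves $b_1,\dots,b_m$, and let $R$ be the set of all $2m$ leaves. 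Then $V(G)\setminus R=\{w_1,w_2\}$, and $c(w_1;R)=m$ (every $b_j$ is reached through the middle vertex $w_2$), likewise $c(w_2;R)=m$; yet $G$ is a tree, so $\nabla_1(G)\leq 1$ and $G$ has no subdivision of $K_3$, while ordering the two hubs before the leaves shows $G$ is in fact $0$-arrangeable. Hence no bound $p(D)$, indeed no bound depending on $n$ at all, can hold for arbitrary proper $R$.

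Nor is this failure an artifact of quantifying over all $R$: the forward greedy itself can reach such a prefix. In the double star a leaf is at every step among the vertices of minimum cost (leaves and hubs stay tied), so with unlucky tie-breaking the greedy places all $2m$ leaves first and is then forced to incur cost $m$. So $c(\cdot\,;R)$ is simply not a quantity that a local forward greedy can control; the hub phenomenon you flagged as ``the main obstacle'' does not merely complicate the density count, it makes the statement you reduce to untrue. A workable repair must change what is minimized greedily to a parameter that is robust against a single hub mediating many cherries---for instance the $2$-admissibility (counting \emph{internally disjoint} paths of length at most $2$) or the strong $2$-coloring number, both of which are bounded in terms of $\nabla_1(G)$ and in turn bound arrangeability; this is essentially the approach of Ne\v{s}et\v{r}il and Ossona de Mendez mentioned in Section \ref{subsec:arrangeable}, whereas R\"odl and Thomas argue directly from the forbidden subdivision. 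As it stands, your argument has a fatal gap at its key reduction.
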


Rödl and Thomas proved the above with $p_n=O(n^8)$.
This was improved by Dvořák \cite{Dvo08} to $O(n^6)$.
Nešetřil and Ossona de Mendez \cite[Theorem 4.4]{NeO09} proved that the arrangeability of a graph $G$ is bounded in terms of $\nabla_1(G)$.
In particular, arrangeability is bounded in classes of graphs with bounded expansion.

\section{Examples}
\label{sec:examples}

First, we recall some known constructions of weighted connected graphs on which Alice's guaranteed outcome in the graph sharing game can be an arbitrarily small proportion of the total weight.
Then, we show how to modify such constructions so as to obtain graphs with the same property that are very sparse (in particular, have bounded expansion).

\begin{example}[\cite{MiW12}]
\label{ex:hedgehog}
Let $G_n$ be a weighted graph with vertex set $\{a_1,\ldots,a_n$, $b_1,\ldots,b_n\}$ such that the subgraph of $G_n$ induced on $\{b_1,\ldots,b_n\}$ is connected and the only neighbor of each $a_i$ is $b_i$.
Each $a_i$ has weight $1$, and each $b_i$ has weight $0$.
The total weight is $n$.
See Figure \ref{fig:hedgehog} for an illustration.

\begin{figure}[t]
\centering
\begin{tikzpicture}[scale=0.851]
  \tikzstyle{every node}=[circle,draw,inner sep=3pt]
  \node (a1) at ( 90:1cm    ) {$0$};
  \node (b1) at ( 90:2.175cm) {$1$};
  \node (a2) at (162:1cm    ) {$0$};
  \node (b2) at (162:2.175cm) {$1$};
  \node (a3) at (234:1cm    ) {$0$};
  \node (b3) at (234:2.175cm) {$1$};
  \node (a4) at (306:1cm    ) {$0$};
  \node (b4) at (306:2.175cm) {$1$};
  \node (a5) at ( 18:1cm    ) {$0$};
  \node (b5) at ( 18:2.175cm) {$1$};
  \path (a1) edge (b1) edge (a2) edge (a5)
        (a2) edge (b2) edge (a3)
        (a3) edge (b3)
        (a4) edge (b4) edge (a5)
        (a5) edge (b5);
\end{tikzpicture}\hspace{1.5cm}
\begin{tikzpicture}[scale=0.851]
  \tikzstyle{every node}=[circle,draw,inner sep=3pt]
  \node (a1) at ( 90:1cm    ) {$0$};
  \node (b1) at ( 90:2.175cm) {$1$};
  \node (a2) at (162:1cm    ) {$0$};
  \node (b2) at (162:2.175cm) {$1$};
  \node (a3) at (234:1cm    ) {$0$};
  \node (b3) at (234:2.175cm) {$1$};
  \node (a4) at (306:1cm    ) {$0$};
  \node (b4) at (306:2.175cm) {$1$};
  \node (a5) at ( 18:1cm    ) {$0$};
  \node (b5) at ( 18:2.175cm) {$1$};
  \path (a1) edge (b1) edge (a2) edge (a3) edge (a4) edge (a5)
        (a2) edge (b2)
        (a3) edge (b3)
        (a4) edge (b4)
        (a5) edge (b5);
\end{tikzpicture}
\caption{Examples of $G_5$}
\label{fig:hedgehog}
\end{figure}
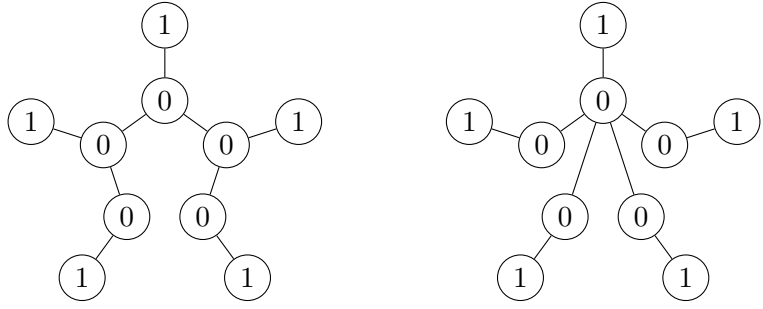

Alice has no strategy to gather more than $1$ from $G_n$.
Indeed, she starts with some $a_i$ (collecting $1$) or $b_i$, and clever Bob responds by taking the other of $a_i$, $b_i$.
In all subsequent moves Alice is forced to take some vertex of $b_1,\ldots,b_n$, say $b_j$, and Bob responds by playing $a_j$.
\end{example}

\begin{example}[\cite{MiW12}]
\label{ex:odd-construction}
Let $H_n$ be a weighted graph with vertex set $\{a_1,\ldots,a_n$, $b_1,\ldots,b_n\}\cup\{c_X\colon X\subset\{1,\ldots,n\}$ and $X\neq\emptyset\}$, which has size $2n+2^n-1$.
The neighborhoods of the vertices are $N(a_i)=\{b_i\}$, $N(b_i)=\{a_i\}\cup\{c_X\colon i\in X\}$, and $N(c_X)=\{b_i\colon i\in X\}$.
Each $a_i$ has weight $1$, and all other vertices have weight $0$.
The total weight is again $n$.
See Figure \ref{fig:odd-construction} for an illustration.

\begin{figure}
\centering
\begin{tikzpicture}[scale=1,xscale=2]
  \tikzstyle{every node}=[circle,draw,inner sep=3pt]
  \tikzstyle{every label}=[rectangle,draw=none]
  \node[label=right:$a_1$]           (a1)   at (   0,0) {$1$};
  \node[label=right:$a_2$]           (a2)   at (   1,0) {$1$};
  \node[label=right:$a_3$]           (a3)   at (   2,0) {$1$};
  \node[label=right:$b_1$]           (b1)   at (   0,1) {$0$};
  \node[label=right:$b_2$]           (b2)   at (   1,1) {$0$};
  \node[label=right:$b_3$]           (b3)   at (   2,1) {$0$};
  \node[label=left:$c_{\{1\}}$]      (c1)   at (   0,2) {$0$};
  \node                              (c2)   at (   1,2) {$0$};
  \node[label=right:$c_{\{3\}}$]     (c3)   at (   2,2) {$0$};
  \node[label=left:$c_{\{1,2\}}$]    (c12)  at (0.42,3) {$0$};
  \node                              (c13)  at (   1,3) {$0$};
  \node[label=right:$c_{\{2,3\}}$]   (c23)  at (1.58,3) {$0$};
  \node[label=right:$c_{\{1,2,3\}}$] (c123) at (   1,4) {$0$};
  \path (b1) edge (a1) edge (c1) edge (c12) edge (c13) edge (c123);
  \path (b2) edge (a2) edge (c2) edge (c12) edge (c23) edge[bend left=14] (c123);
  \path (b3) edge (a3) edge (c3) edge (c13) edge (c23) edge (c123);
\end{tikzpicture}
\caption{$H_3$;\enspace to obtain $H_3'$, subdivide each edge $b_ic_X$ by a large even number of new vertices of zero weight}
\label{fig:odd-construction}
\end{figure}
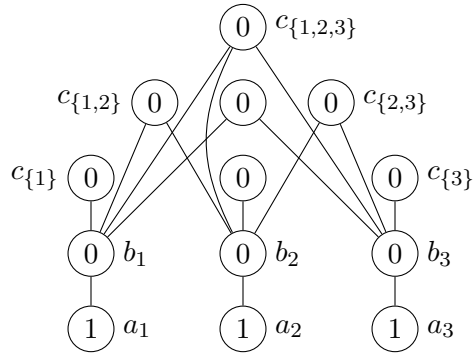

Again, Alice has no strategy to gather more than $1$ from $H_n$.
Let $V=V(H_n)$.
Suppose that Alice starts with $a_{i_1}$ or $b_{i_1}$.
Bob responds by taking the other of $a_{i_1}$, $b_{i_1}$.
Let $V_1=\{a_j,b_j\colon j\neq i_1\}\cup\{c_X\colon i_1\notin X\}$.
If $n-1>0$, then the subgraph induced on $V_1$ is isomorphic to $H_{n-1}$.
In particular, $\size{V_1}$ is odd and $\size{V\setminus V_1}$ is even.
Since $b_{i_1}$ has been taken, all vertices in $V\setminus V_1$ are available.
Therefore, as long as Alice plays in $V\setminus V_1$, Bob can respond also in $V\setminus V_1$.
Alice is eventually forced to enter $V_1$, which is possible only by taking some $b_{i_2}$, and Bob immediately follows with $a_{i_2}$.
If $n-2>0$, then we define $V_2=\{a_j,b_j\colon j\neq i_1,i_2\}\cup\{c_X\colon i_1,i_2\notin X\}$ and continue with the same argument, and so on.
This way Bob wins all of $a_1,\ldots,a_n$ except $a_{i_1}$.
If Alice starts with some $c_X$, then Bob takes any available $b_{i_1}$, and the same argument shows that Bob can take all of $a_1,\ldots,a_n$ except $a_{i_1}$.
\end{example}

Example \ref{ex:hedgehog} shows that very simple trees (caterpillars or subdivided stars) with an even number of vertices can be arbitrarily bad for Alice.
Example \ref{ex:odd-construction} shows that there are also (quite dense) graphs with an odd number of vertices that are arbitrarily bad for Alice.
In particular, the graph $H_n$ contains a subdivision of $K_n$ in which every edge is subdivided by one new vertex.
We can obtain much sparser examples with the help of the following proposition.

\begin{proposition}
\label{prop:subdivision}
Let\/ $G$ be a weighted connected graph such that any two vertices with positive weight are at distance at least\/ $3$ in\/ $G$.
If\/ $H$ is a graph obtained from\/ $G$ by subdividing every edge\/ $uv$ with\/ $w(u)=w(v)=0$ by an even number of vertices of zero weight, then the guaranteed outcome of Alice on\/ $H$ is equal to the one on\/ $G$.
\end{proposition}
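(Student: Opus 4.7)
The plan is to prove the two inequalities comparing the guaranteed outcomes separately, by translating strategies between the games on $G$ and $H$. For each edge $uv$ of $G$ with $w(u)=w(v)=0$ that is subdivided in $H$ into a path $u=y_0,y_1,\ldots,y_{2k+1}=v$, I pair the $2k$ internal vertices as $\{y_1,y_2\},\{y_3,y_4\},\ldots,\{y_{2k-1},y_{2k}\}$; paired vertices are always adjacent in $H$, and an even number of internal vertices ensures each path is exhausted by an integer number of pairs.

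To show that Alice can achieve on $H$ at least as much as on $G$, I fix a strategy $\sigma$ for Alice on $G$ and construct a strategy on $H$. Alice maintains a \emph{shadow} game on $G$ whose moves are exactly those $H$-moves that land on $V(G)$; moves onto internal subdivision vertices are \emph{traversal moves}. On each of her $H$-turns, Alice proceeds as follows: if Bob's immediately previous move was a traversal move whose pair partner is not yet taken, she plays the partner, closing the pair; otherwise she consults $\sigma$ on the shadow history for her next shadow move $v^*$ and either plays $v^*$ (if it is adjacent in $H$ to the taken set) or plays the first internal vertex on the subdivided path from some shadow-taken neighbor of $v^*$ in $G$ toward $v^*$.

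The core invariant to verify by induction on rounds is that after every complete Alice-Bob round the set of taken subdivision vertices on every subdivided path is a union of complete pairs. This invariant has two consequences. First, it enforces parity alignment: since every completed pair contributes exactly two $H$-turns, the shadow moves of each player always occur on that player's $H$-turns. Second, it makes every shadow move legal in $G$: whenever a $V(G)$-vertex is played in $H$ it is adjacent to the taken set through either a preserved edge or a fully traversed subdivided path, each of which corresponds to an edge of $G$. Since $\sigma$ guarantees Alice at least a $c$-fraction of $w(G)$ against any legal opponent in $G$, subdivision vertices carry zero weight, and $w(H)=w(G)$, Alice obtains the same fraction in the $H$-game.

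The reverse direction, converting Bob's $G$-strategy into an $H$-strategy with the same guarantee against Alice, is entirely symmetric. The main technical obstacle is verifying the invariant under all of Bob's possible disruptions of the pairing, most notably when he makes a shadow move while Alice is mid-traversal; in that case Alice uses her next turn to close the open pair herself, and the evenness of each subdivision length ensures the invariant is restored within one Alice-Bob round. The distance-at-least-3 hypothesis is used throughout: since positive-to-zero edges are never subdivided, positive-weight vertices retain their original neighborhoods in $H$, and since no two positive-weight vertices share a zero-weight neighbor, the pairing on distinct subdivided paths remains independent and the translation between moves in $H$ and moves in $G$ is unambiguous.
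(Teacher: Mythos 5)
The paper does not actually contain a proof to compare against (it states that the ``easy but technical'' proof of Proposition \ref{prop:subdivision} is omitted), so your proposal has to stand on its own --- and as written it has a genuine gap at exactly the point you flag as ``the main technical obstacle.'' Your parity-alignment claim does not follow from the pairing invariant. Suppose the shadow history is an alternating play ending with a Bob move, $\sigma$ tells Alice to play a vertex $v^*$ that lies behind a subdivided path, and Alice plays the first interior vertex $y_1$. Bob is under no obligation to answer with $y_2$: he may instead play some vertex of $V(G)$ that is available in $H$. Your fix --- Alice closes the pair by taking $y_2$ herself --- does restore the ``complete pairs after each round'' invariant, but it costs Alice a tempo: the shadow sequence now contains two consecutive Bob moves (equivalently, Alice has passed in the simulated game on $G$), so it is no longer a legal alternating history, $\sigma$ is simply undefined on it, and the strategy description (``otherwise she consults $\sigma$ on the shadow history'') breaks down. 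Nor can this be dismissed as harmless: in game T an extra move for Bob is not innocuous --- he can use the tempo gained while Alice is stuck traversing a weight-zero corridor to race ahead and capture heavy vertices he could not have reached in the game on $G$ --- so invoking $\sigma$'s guarantee after such insertions requires an argument (a re-synchronization scheme, a charging/potential argument, or an induction on subdivided edges) that the proposal does not supply. Note also that traversal is only ever needed towards weight-zero targets (a positive-weight $\sigma$-move is always directly reachable, since its incident edges are never subdivided), which is where the distance-$3$ hypothesis should really be exploited; your proposal uses that hypothesis only for well-definedness of the pairing, not to control Bob's options during a traversal.

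The reverse inequality is likewise not ``entirely symmetric'': there Bob simulates a $G$-strategy, and his dictated move may itself lie behind a subdivided path, during whose traversal Alice can interleave shadow moves, so the same alternation problem reappears with the added asymmetry that Alice moves first. In short, the pairing idea and the shadow-game framework are a reasonable start, but the heart of the proposition is precisely the case you dispose of in one sentence, and the argument given there does not close it.
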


\begin{proof}[Proof sketch]
Since the distance between any two vertices with positive weight is at least $3$, taking a vertex with positive weight makes no other vertex with positive weight available, and taking a vertex with zero weight makes at most one vertex with positive weight available.
It follows that an optimal strategy of either player satisfies the following: whenever a vertex with positive weight is available, take it; otherwise, if possible, take a vertex with zero weight that makes no vertex with positive weight available.
Therefore, when both players play optimally, the game on $H$ differs from the game on $G$ only by an additional even number of vertices of zero weight taken between some pairs of vertices with positive weight.
\end{proof}

Proposition \ref{prop:subdivision} allows us to turn the graphs from Example \ref{ex:odd-construction} (or any other family of graphs with similar properties) into a family of much sparser graphs with an odd number of vertices and with the same guaranteed outcome of Alice.

\begin{example}
\label{ex:bounded-exp}
Fix a non-decreasing function $f\colon\setN\to(1,\infty)$ with $f(r)\to\infty$ as $r\to\infty$.
Let $H_n$ for $n\in\setN^+$ denote the graphs constructed in Example \ref{ex:odd-construction}.
Let $z_n=\max_{r\in\setN}\nabla_r(H_n)$.
This is a finite number, as $H_n$ has a bounded number of minors.
It follows that $\nabla_r(H_n')\leq z_n$ for every subdivision $H_n'$ of $H_n$ and every $r\in\setN$.
For any fixed $r\in\setN$, if we subdivide each edge $b_ic_X$ of $H_n$ by a large enough number of new vertices, thus obtaining a graph $H_n'$, then the ratio of the number of vertices of degree $2$ to the number of all vertices in every $r$-shallow minor of $H_n'$ is high enough to guarantee $\nabla_r(H_n')\leq f(r)$.
Let $H_n'$ be a graph obtained from $H_n$ by subdividing every edge $b_ic_X$ of $H_n$ by an even number of new vertices large enough to guarantee $\nabla_r(H_n')\leq\min\{z_n,f(r)\}$ for every $r\in\setN$.
It follows that the graphs $H_n$ have expansion bounded by $f$.
Since $\size{V(H_n)}$ is odd, $\size{V(H_n')}$ is odd as well.
The weight of all subdividing vertices is set to zero, so that $w(H_n')=n$.

Alice has no strategy to gather more than $1$ from $H_n'$.
This follows from Proposition \ref{prop:subdivision} or can be proved directly by a modification of the argument for $H_n$ used in Example \ref{ex:odd-construction}.
\end{example}

Proposition \ref{prop:subdivision} and Example \ref{ex:bounded-exp} motivate the assumption that $G$ contains no subdivision of a fixed complete graph in Theorem \ref{thm:intro-game}.
Indeed, if we want to exclude graphs on which Alice's guaranteed outcome is not greater than $1/n$ of the total weight, then we need to exclude at least the graph $H_n$ from Example \ref{ex:odd-construction} and all its subdivisions of the special kind considered in Proposition \ref{prop:subdivision}, including the graphs $H_n'$ from Example \ref{ex:bounded-exp}.

\section{Structural properties of weighted connected graphs}
\label{sec:structural}

Our goal in this section is to prove Theorem \ref{thm:intro-struct}.
Namely, for a suitable constant $c_n>0$, we show that every weighted connected graph $G$ with no subdivision of $K_n$ contains at least one of the following structures:
\begin{itemize}
\item a connected set $S$ of vertices such that $\balance(G\setminus S)\geq c_nw(G)$,
\item a set $S$ of vertices such that $\link{G}{S}$ is a cycle and $w(S)\geq c_nw(G)$,
\end{itemize}
see Corollary \ref{cor:struct-subdiv}.
This provides a base for the strategies of Alice developed in the next section.

First, we show that every weighted connected graph $G$ contains at least one of the structures above or a connected set $S$ of vertices with $w(N(S))\geq c_nw(G)$, see Corollaries \ref{cor:struct-hamil} and \ref{cor:struct-full}.
Then, we reduce the latter case to the first two for graphs with forbidden subdivision of $K_n$.

\subsection{Hamiltonian graphs}

The following lemma deals with \emph{oriented graphs}, that is, graphs in which every edge is assigned an orientation.
An \emph{oriented path} or \emph{cycle} is a path or cycle in which the orientations of edges agree with the order of vertices along the path or cycle.

For an oriented path $P$, we let $<_P$ denote the order of vertices along $P$.
For vertices $u$ and $v$ of an oriented path $P$, we define
\begin{align*}
[u,v]_P&=\bigl\{x\in V(P)\colon u\leq_Px\leq_Pv\bigr\},\\
[u,v)_P&=\bigl\{x\in V(P)\colon u\leq_Px<_Pv\bigr\},\\
(u,v)_P&=\bigl\{x\in V(P)\colon u<_Px<_Pv\bigr\}.
\end{align*}
An oriented path $P$ in an oriented graph $G$ is \emph{Hamiltonian} if $V(P)=V(G)$.
A vertex $x$ of an oriented graph $G$ with a Hamiltonian path $P$ is \emph{$P$-covered} by an edge $uv\in E(G)$ if $x\in(u,v)_P$.
A vertex of such a graph $G$ is \emph{$P$-covered} if it is $P$-covered by at least one edge of $G$.

\begin{lemma}
\label{lem:oriented-path}
Let\/ $G$ be an acyclic oriented graph with at least two vertices and with a Hamiltonian path\/ $P$ starting at\/ $s$ and ending at\/ $t$.
If every vertex in\/ $V(G)\setminus\{s,t\}$ is\/ $P$-covered, then there are two oriented paths\/ $Q_0$ and\/ $Q_1$ in\/ $G$ starting at\/ $s$, ending at\/ $t$, and having no other vertices in common.
\end{lemma}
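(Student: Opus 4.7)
My plan is to reduce the statement to the directed vertex-form of Menger's theorem. First I would observe that, because $G$ is acyclic and $P$ is a directed Hamiltonian path in $G$, every edge $uv\in E(G)$ must satisfy $u<_P v$: otherwise, appending the $P$-subpath from $v$ to $u$ to the edge $uv$ would produce a directed cycle in $G$. Thus every edge of $G$ is ``forward'' along $P$, and every directed walk from $s$ is actually a directed path (no vertex can be revisited).

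To exhibit two internally vertex-disjoint directed $s$-to-$t$ paths $Q_0, Q_1$, it suffices by Menger's theorem to verify that no single internal vertex cuts $s$ from $t$ in $G$; that is, for each $x\in V(G)\setminus\{s,t\}$, the digraph $G-x$ still contains a directed path from $s$ to $t$. This is exactly where the covering hypothesis enters: given such an $x$, select an edge $uv\in E(G)$ with $u<_P x<_P v$ witnessing that $x$ is $P$-covered, and form the walk obtained by concatenating the $P$-subpath from $s$ to $u$, the edge $uv$, and the $P$-subpath from $v$ to $t$. This walk jumps over the whole interval $(u,v)_P$ and, in particular, misses $x$; by the observation of the previous paragraph, it is a directed $s$-$t$ path in $G-x$.

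I do not foresee a serious obstacle. The only point that deserves a small amount of care is invoking the correct form of Menger's theorem (directed, internal-vertex version), which says that the existence of two internally disjoint directed $s$-to-$t$ paths is equivalent to the nonexistence of a single internal vertex cut. If one wished to avoid the theorem altogether, one could instead proceed by induction on $\size{V(G)}$: use a covering edge incident to $v_1$ (or symmetrically to $v_{k-1}$) to detour around $v_1$, remove it, and reduce to a smaller instance in which the covering property is easily re-established. The endpoint cases in that constructive variant are what make it more verbose than the Menger-based argument sketched above, which is why I would favor the latter.
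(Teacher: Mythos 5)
Your proposal is correct in substance, but it takes a genuinely different route from the paper. The paper argues constructively: it builds a sequence $v_0,\ldots,v_n$ by repeatedly jumping, via covering edges, to the $<_P$-greatest vertex reachable from the interval already spanned, and then forms $Q_0$ and $Q_1$ by alternately replacing the intervals $[u_i,v_i]_P$ of $P$ with the single covering edges $u_iv_i$; the disjointness comes from the fact that the intervals $[s,u_1]_P$, $[v_{i-2},u_i]_P$, $[v_{n-1},t]_P$ are pairwise disjoint. You instead reduce the statement to the directed internal-vertex form of Menger's theorem, and your two supporting observations are sound: acyclicity forces every edge of $G$ to go forward along $P$, and for any internal vertex $x$ a covering edge $uv$ yields the forward path $s\to\cdots\to u\to v\to\cdots\to t$ in $G-x$, so no single internal vertex separates $s$ from $t$. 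The one point you should patch is the adjacency caveat in Menger's theorem: the vertex form presupposes $st\notin E(G)$, since when $st\in E(G)$ no set of internal vertices separates $s$ from $t$ and yet two \emph{distinct} internally disjoint paths need not exist (the extreme case being $V(G)=\{s,t\}$, where the lemma is satisfied only by taking $Q_0=Q_1$ equal to the edge $st$, exactly as the paper's construction does). This is a one-line fix: if $st\in E(G)$, take $Q_0$ to be the edge $st$ and $Q_1=P$, which share no internal vertices, and invoke Menger only when $st\notin E(G)$. With that adjustment your argument is complete and noticeably shorter; what the paper's proof buys in exchange is self-containedness (no appeal to Menger) and an explicit description of the two paths, but nothing in the later application (Lemma \ref{lem:struct-hamil1}) requires the explicit form, so either proof would serve.
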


\begin{proof}
For every $v\in V(G)\setminus\{s,t\}$, since $v$ is $P$-covered, there is an oriented path in $G$ starting at $s$, ending at $t$, and avoiding $v$.
The assertion of the lemma follows by the directed version of Menger's theorem (see, e.g., \cite{Dir66}).
\end{proof}

For vertices $u$ and $v$ of an oriented cycle $H$, we let $(u,v)_H$ denote
\begin{itemize}
\item the set of internal vertices of the path in $H$ from $u$ to $v$ if $u\neq v$,
\item the set $V(H)\setminus\{u\}$ if $u=v$.
\end{itemize}
We also define
\begin{equation*}
[u,v)_H=\{u\}\cup(u,v)_H,\qquad(u,v]_H=(u,v)_H\cup\{v\}.
\end{equation*}
An (oriented) cycle $H$ in an (oriented) graph $G$ is \emph{Hamiltonian} if $V(H)=V(G)$.
A vertex $x$ of an oriented graph $G$ with a Hamiltonian cycle $H$ is \emph{$H$-covered} by an edge $uv\in E(G)$ if $x\in(u,v)_H$.

Recall that we consider a single-vertex graph and a graph consisting of two vertices joined by an edge as (unoriented) cycles of length $1$ and $2$, respectively.

\begin{lemma}
\label{lem:struct-hamil1}
There is\/ $c\in(0,1]$ such that every weighted graph\/ $G$ containing a Hamiltonian cycle\/ $H$ satisfies at least one of the following conditions:
\begin{enumerate}
\item\label{item:struct-hamil1-1} There is a connected set\/ $S\subset V(G)$ such that
\begin{equation*}
\balance(H\setminus S)\geq cw(G).
\end{equation*}
\item\label{item:struct-hamil1-2} There is a set\/ $S\subset V(G)$ such that\/ $\link{G}{S}$ is a cycle and
\begin{equation*}
w(S)\geq cw(G).
\end{equation*}
\end{enumerate}
\end{lemma}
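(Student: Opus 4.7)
The plan is first to try the simplest candidate for case \ref{item:struct-hamil1-1} --- a single chord of $H$ --- and, if no such chord works, to extract a large cyclic structure for case \ref{item:struct-hamil1-2} by iteratively peeling off the ``light sides'' of chords.

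I would first dispose of the trivial cases $\size{V(G)} \leq 2$: here $G$ is itself a cycle of length at most $2$, so case \ref{item:struct-hamil1-2} holds with $S = V(G)$ and $c = 1$. So assume $\size{V(G)} \geq 3$ and fix a constant $c$, say $c = 1/3$, whose precise value can be optimized at the end. For each chord $e = uv$ of $H$ (that is, $uv \in E(G) \setminus E(H)$ with $u, v$ non-adjacent on $H$), the set $\{u, v\}$ is connected in $G$ via $e$, and $H \setminus \{u, v\}$ decomposes into two arcs $A_e, B_e$. If $\min(w(A_e), w(B_e)) \geq cw(G)$ for some chord $e$, then $S = \{u, v\}$ immediately witnesses case \ref{item:struct-hamil1-1}. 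So I may assume every chord $e$ has a \emph{light arc} $A_e$ with $w(A_e) < cw(G)$.

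To obtain case \ref{item:struct-hamil1-2} in the remaining situation, I would construct $S$ by iterative peeling. Start with $S_0 = V(G)$ and $H_0 = H$. While $G[S_i]$ contains an edge outside $H_i$, pick such a chord $e = uv$ of $H_i$ whose light arc $A_e$ (with respect to $H_i$) is minimal under set inclusion; set $S_{i+1} = S_i \setminus A_e$ and let $H_{i+1}$ be the cycle obtained from $H_i$ by replacing the subpath through $A_e$ with the edge $uv$. The procedure terminates since $\size{S_i}$ strictly decreases, and at the end $G[S]$ equals the final Hamiltonian cycle $H'$, which has no chord.

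It remains to verify that $\link{G}{S} = H'$ (so the cyclic-ordering condition of case \ref{item:struct-hamil1-2} holds) and that $w(S) \geq cw(G)$. The \textbf{main obstacle} is handling chords that ``cross'' a peeled arc $A_e$, that is, chords $xy$ with $x \in A_e$ but $y$ outside $A_e \cup \{u, v\}$. Such a chord survives the peeling of $A_e$ with one endpoint still in $S$, producing a component of $G \setminus S$ attached to a third $S$-vertex $y$ on $H'$ and violating the consecutive-attachments condition. Resolving this requires either refining the peeling to also remove such crossing endpoints (with the extra removed weight controlled inductively, perhaps via Lemma \ref{lem:oriented-path} applied to an orientation of $H$ with chords as forward edges), or a direct case analysis showing that the presence of many crossing chords forces some single chord to witness case \ref{item:struct-hamil1-1} directly. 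For the weight accounting, the peeled arcs are pairwise disjoint and each has weight less than $c$ times the weight of the current reduced cycle; stopping the peeling before the remaining cycle's total weight drops below $cw(G)$ ensures that either $w(S) \geq cw(G)$ in the end, or a chord found along the way itself delivers case \ref{item:struct-hamil1-1}.
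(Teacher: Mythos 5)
Your opening reduction (if some chord $uv$ has both arcs of weight at least $cw(G)$, take $S=\{u,v\}$ for case (1); otherwise every chord has a light arc) coincides with the paper's first step, but from there the argument has two unclosed gaps, the first of which you flag yourself. The crossing-chord problem is not a deferrable technicality but the crux: a chord $xy$ with $x$ in a peeled arc and $y\in S$ makes some component of $G\setminus S$ adjacent to non-consecutive vertices of the final cycle, so $\link{G}{S}$ acquires extra edges and need not be a cycle; writing ``refine the peeling or do a case analysis'' names two possible repairs without carrying either out. Second, the weight accounting fails as stated. The peeled arcs are individually of weight less than $cw(G)$, but there can be $\Omega(\size{V(G)})$ of them and their union can carry essentially all of $w(G)$: on the cycle $v_1v_2\ldots v_{2m}$ with chords $v_{2i-1}v_{2i+1}$ and all weight on the even-indexed vertices, every light arc is a single heavy vertex and your peeling ends with $w(S)$ negligible. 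The proposed fallback does not rescue this: if you stop peeling before the remaining weight drops below $cw(G)$, then $G[S]$ still contains chords, and any chord of $G[S]$ is itself an edge of $\link{G}{S}$, so case (2) is not obtained; and the claim that ``a chord found along the way itself delivers case (1)'' is unsupported---in the example above no single chord does, since each chord cuts off only one vertex of weight $1/m$, so $\balance(H\setminus\{u,v\})=1/m$ for every chord $uv$.

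For comparison, the paper avoids both problems by not insisting on a chordless remainder. It orients every chord toward its light arc and considers the set $U$ of vertices covered by no chord: $\link{G}{U}$ is automatically a cycle, so either $w(U)\geq cw(G)$ and case (2) holds, or $U$ is light, and then Lemma \ref{lem:oriented-path} (applied to the segments between consecutive vertices of $U$) produces two cycles $C_0,C_1$ running through the chords whose only common vertices lie in $U$. One of them has weight at most $\tfrac{1}{2}(1+c)w(G)$, while every component of $H\setminus V(C_k)$ lies under a single chord and is therefore of weight less than $cw(G)$, so $S=V(C_k)$ witnesses case (1) with $c=1/5$. In other words, the connected set for case (1) is built to absorb the crossing chords rather than to avoid them, and the weight loss is controlled by the single intersection set $U$ rather than by summing over peeled arcs; in your example this construction returns $S=\{v_1,v_3,\ldots,v_{2m-1}\}$, for which $\balance(H\setminus S)$ is nearly $w(G)$. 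That mechanism is exactly what your peeling scheme is missing.
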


\begin{proof}
We show that it is enough to set
\begin{equation}
\label{eq:struct-hamil1-c}
c=1/5.
\end{equation}

Let $G$ be a weighted graph containing a Hamiltonian cycle $H$.
If $G$ has no more than $3$ vertices, then \ref{item:struct-hamil1-2} holds for $S=V(G)$.
If some edge $uv\in E(G)$ satisfies $\balance(H\setminus\{u,v\})\geq cw(G)$, then \ref{item:struct-hamil1-1} holds for $S=\{u,v\}$.
Therefore, for the remainder of the proof, assume that $G$ has at least $4$ vertices and every edge $uv\in E(G)$ satisfies
\begin{equation}
\label{eq:H-balance}
\balance(H\setminus\{u,v\})<cw(G).
\end{equation}

Suppose that we find two connected sets $S_0,S_1\subset V(G)$ such that
\begin{gather}
\label{eq:S-def-1}
w(S_0\cap S_1)<cw(G),\\
\label{eq:S-def-2}
w(C)<cw(G)\quad\text{for }C\in\calC(H\setminus S_k)\text{ and }k\in\{0,1\}.
\end{gather}
By \eqref{eq:S-def-1}, we have
\begin{equation*}
w(S_0)+w(S_1)=w(S_0\cup S_1)+w(S_0\cap S_1)<(1+c)w(G).
\end{equation*}
Without loss of generality, we have $w(S_0)\leq w(S_1)$, and then the above yields
\begin{equation}
\label{eq:w(S_0)}
w(S_0)<\tfrac{1}{2}(1+c)w(G).
\end{equation}
To conclude, we have
\begin{gather*}
\begin{alignedat}{2}
\balance(H\setminus S_0)&=w(H\setminus S_0)-\max_{C\in\calC(H\setminus S_0)}w(C)\\
&>w(G)-\tfrac{1}{2}(1+c)w(G)-cw(G)&\quad&\text{by \eqref{eq:w(S_0)} and \eqref{eq:S-def-2}}\\
&=cw(G)&\quad&\text{by \eqref{eq:struct-hamil1-c}}.
\end{alignedat}
\end{gather*}
This shows that \ref{item:struct-hamil1-1} holds for $S=S_0$.

To complete the proof, we show how to find two connected sets $S_0,S_1\subset V(G)$ satisfying \eqref{eq:S-def-1} and \eqref{eq:S-def-2} or a set $S\subset V(G)$ satisfying the conclusion \ref{item:struct-hamil1-2} of the lemma.

We orient the edges of $G$ as follows.
First, we orient the cycle $H$ in any of the two directions.
Then, we assign to every edge in $E(G)\setminus E(H)$ an orientation $uv$ so that $w((u,v)_H)\leq w((v,u)_H)$.
This and \eqref{eq:H-balance} imply that every oriented edge $uv\in E(G)\setminus E(H)$ satisfies
\begin{equation}
\label{eq:w(H_uv)}
w((u,v)_H)=\balance(H\setminus\{u,v\})<cw(G).
\end{equation}
From now on, we consider $G$ as an oriented graph and $H$ as an oriented Hamiltonian cycle in $G$.

Let $U$ be the set of vertices of $G$ that are not $H$-covered.
Suppose $U\neq\emptyset$.
Let $u_0,\ldots,u_{n-1}$ be the vertices in $U$ in the order they occur along $H$, and let $u_n=u_0$.
For every edge $xy\in E(G)$, there is an index $i\in\{0,\ldots,n-1\}$ such that $x\in[u_i,u_{i+1})_H$ and $y\in(u_i,u_{i+1}]_H$, as otherwise $xy$ would cover a vertex from $U$.
It follows that $\link{G}{U}$ is the cycle consisting of $u_0,\ldots,u_{n-1}$ in this order.
If $w(U)\geq cw(G)$, then \ref{item:struct-hamil1-2} holds for $S=U$.
Thus assume
\begin{equation}
\label{eq:w(U)}
w(U)<cw(G).
\end{equation}
For every $u_i$, create a new vertex $u'_i$ and redirect all edges of $G$ that end at $u_i$ sending them to $u'_i$.
Thus a new oriented graph $G'$ is obtained.
It splits into $n$ pairwise disjoint acyclic oriented graphs $G'_0,\ldots,G'_{n-1}$, each $G'_i$ containing a Hamiltonian path from $u_i$ to $u'_{i+1}$ going through all vertices in $(u_i,u_{i+1})_H$.
Every vertex $x\in V(G'_i)\setminus\{u_i,u'_{i+1}\}$ is $H$-covered by an edge corresponding to the one that originally $H$-covers $x$ in $G$.
Therefore, by Lemma \ref{lem:oriented-path}, each $G'_i$ has two paths $Q^0_i$ and $Q^1_i$ from $u_i$ to $u'_{i+1}$ containing no other common vertices.
For $k\in\{0,1\}$, let $C_k$ be the cycle in $G$ obtained by taking the union of all $Q^k_i$ and gluing each pair $u_i,u'_i$ back into the single vertex $u_i$.
Let $S_k=V(C_k)$.
It follows that $S_0\cap S_1=U$, and hence \eqref{eq:S-def-1} follows from \eqref{eq:w(U)}.
Moreover, every component of $H\setminus S_k$ is entirely contained in $(u,v)_H$ for some edge $uv\in E(C_k)$, and hence \eqref{eq:S-def-2} follows from \eqref{eq:w(H_uv)}.
Since $S_0$ and $S_1$ are the vertex sets of cycles in $G$, they are connected in $G$.

Now, suppose $U=\emptyset$.
Choose any vertex $v\in V(G)$.
Redirect all edges that $H$-cover $v$ sending them to $v$ (removing duplicates).
Thus a new oriented graph $G^\star$ with the same Hamiltonian cycle $H$ is obtained.
Every redirected edge $uv$ still satisfies \eqref{eq:w(H_uv)}.
The vertex $v$ is not $H$-covered in $G^\star$.
Moreover, all vertices not $H$-covered in $G^\star$ are $H$-covered in $G$ by a common edge.
Therefore, by \eqref{eq:w(H_uv)}, the set $U^\star$ of vertices that are not $H$-covered in $G^\star$ satisfies
\begin{equation}
\label{eq:w(U*)}
w(U^\star)<cw(G).
\end{equation}
We apply the same argument as for the case $U\neq\emptyset$, but with $G^\star$ and $U^\star$ in place of $G$ and $U$ and using \eqref{eq:w(U*)} instead of \eqref{eq:w(U)}.
This gives us two cycles $C_0$ and $C_1$ in $G^\star$ with vertex sets $S_0$ and $S_1$, respectively, which satisfy \eqref{eq:S-def-1} and \eqref{eq:S-def-2}.
Moreover, each $C_k$ can contain only one edge from $E(G^\star)\setminus E(G)$, namely, the one entering $v$.
In $G$, that edge ends at a vertex from $U^\star$, which shows that each $S_k$ is connected in $G$.
\end{proof}

Lemma \ref{lem:struct-hamil1} gives a lower bound on $\balance(H\setminus S)$.
The next lemma will allow us to replace it by a lower bound on $\balance(G\setminus S)$.

\begin{lemma}
\label{lem:struct-hamil2}
There is\/ $c\in(0,1]$ such that every weighted graph\/ $G$ containing a Hamiltonian cycle\/ $H$ has the property that every connected set\/ $A\subset V(G)$ satisfies at least one of the following conditions:
\begin{enumerate}
\item\label{item:struct-hamil2-1} There is a connected set\/ $S\subset V(G)$ such that\/ $A\subset S$ and
\begin{equation*}
\balance(G\setminus S)\geq c\balance(H\setminus A).
\end{equation*}
\item\label{item:struct-hamil2-2} There is a connected set\/ $S\subset V(G)$ such that\/ $A\subset S$ and
\begin{equation*}
w(N(S))\geq c\balance(H\setminus A).
\end{equation*}
\end{enumerate}
\end{lemma}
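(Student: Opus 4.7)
The plan is to reduce Lemma \ref{lem:struct-hamil2} to Lemma \ref{lem:struct-hamil1} via the reduction obtained by collapsing $A$ together with a heaviest arc of $H\setminus A$. Write $P_1,\dots,P_m$ for the components of $H\setminus A$ and assume $m\geq 2$, since otherwise $\balance(H\setminus A)=0$ and the statement is trivial. Let $P_{\max}$ be one of maximum weight, set $B=A\cup V(P_{\max})$ and $Y=V(G)\setminus B$. The set $B$ is connected in $G$ because $P_{\max}$ joins to $A$ through its two $H$-neighbors, and $w(Y)=w(H\setminus A)-w(P_{\max})=\balance(H\setminus A)$. The reduction $G'=\link{G}{Y}$, with weights inherited from $G$, has total weight $w(Y)=\balance(H\setminus A)$ and carries a Hamiltonian cycle $H'$ built by arranging the remaining arcs $P_i$ ($i\neq\max$) in the cyclic order inherited from $H$ and joining consecutive arcs by $G'$-edges coming from $G$-paths through $B$ (which exist because the endpoints of each $P_i$ lie in $N_G(A)$).

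Apply Lemma \ref{lem:struct-hamil1} to $(G',H')$, obtaining a constant $c_1\in(0,1]$ and a set $S^*\subseteq Y$ satisfying either~\textup{(i)}~$S^*$ is connected in $G'$ with $\balance_{G'}(H'\setminus S^*)\geq c_1\,\balance(H\setminus A)$, or~\textup{(ii)}~$\link{G'}{S^*}$ is a cycle with $w(S^*)\geq c_1\,\balance(H\setminus A)$. In both cases set $S=S^*\cup B$. Then $A\subseteq S$, and $S$ is connected in $G$: $B$ is connected, and every edge of the connected subgraph $G'[S^*]$ lifts to either a direct $G$-edge within $S^*$ or a $G$-path whose internal vertices lie in $B\subseteq S$, so $S^*$ is tethered to $B$ inside $G[S]$.

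In case~\textup{(i)}, the target is conclusion~(1), $\balance(G\setminus S)\geq c\,\balance(H\setminus A)$. The delicate point is that $G[Y\setminus S^*]$ and $H'[Y\setminus S^*]$ share the arc-edges of $H\setminus A$ restricted to $Y\setminus S^*$, but $H'$ contains ``connecting edges'' through $B$ absent from $G[Y]$, while $G[Y]$ contains non-$H$ chords absent from $H'$. Both graphs embed into the common supergraph $\link{G}{Y\setminus S^*}$, and a comparison through it shows that each non-$H$ $G$-chord that merges two $H'$-components of $Y\setminus S^*$ contributes at least one endpoint to $N_G(S)$; consequently, any deficit of $\balance(G\setminus S)$ relative to $\balance_{G'}(H'\setminus S^*)$ is compensated by a proportional gain in $w(N_G(S))$, so one ends in conclusion~(1) or conclusion~(2). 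In case~\textup{(ii)}, transitivity of reductions gives $\link{G'}{S^*}=\link{G}{S^*}$, a cycle with $w(S^*)\geq c_1\balance(H\setminus A)$. Since $B$ is connected and disjoint from $S^*$, it lies in a single component $C_B$ of $G\setminus S^*$ whose $S^*$-neighborhood is contained in at most two consecutive cycle vertices. Walking around the cycle of $\link{G}{S^*}$ starting at $C_B$ and enlarging a connected superset of $B$ by one component of $G\setminus S^*$ and one cycle vertex at a time, one argues that either a single cycle vertex already carries weight $\geq c\,\balance(H\setminus A)$ and sits in $N_G(V(C_B))$, or the weight of the ``between-vertex'' components accumulated in $N_G(S)$ for some consecutive arc of the cycle reaches a constant fraction of $w(S^*)$. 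Either way conclusion~(2) holds.

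The main obstacle is the case-\textup{(i)} reconciliation between the connecting edges of $H'$ (present in $H'$ but not in $G$) and the non-$H$ chords of $G$ (present in $G$ but not in $H'$): these two perturbations pull the component structure of $Y\setminus S^*$ in opposite directions, and the proof must show that they cancel out—either $\balance(G\setminus S)$ survives as a constant fraction of $\balance_{G'}(H'\setminus S^*)$, or the non-$H$ chords force $w(N_G(S))$ to be large. Pushing the constants carefully through both cases yields a constant $c>0$ depending only on $c_1$, which is the constant of Lemma \ref{lem:struct-hamil1}.
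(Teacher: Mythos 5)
Your route---collapsing $A\cup V(P_{\max})$ and invoking Lemma \ref{lem:struct-hamil1} on the reduction $\link{G}{Y}$---is genuinely different from the paper's proof (which works directly in $G$, cutting $V(G)\setminus A$ into blocks along a Hamiltonian path, splitting them into two families $\calB_0,\calB_1$, and finishing with the sets $A'_k,F_k,A''_k$ and a two-heavy-components argument), but both of your lifting steps fail, and they fail exactly where the real content of Lemma \ref{lem:struct-hamil2} lies: controlling the edges of $G$ that are not edges of $H$. In case (i), the claim that every $G$-chord merging two $H'$-components of $Y\setminus S^*$ has an endpoint in $N_G(S)$ is false. Concretely, let $H$ be the cycle $v_0v_1\dots v_{2m+1}$ with unit weights, let $G$ be $H$ plus the chord $v_0v_{m+1}$, and $A=\{v_0,v_{m+1}\}$, so $\balance(H\setminus A)=m$; take $P_{\max}=v_1\dots v_m$, $Y=\{v_{m+2},\dots,v_{2m+1}\}$, so that $G'=\link{G}{Y}$ carries the $m$-cycle $H'$. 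Add to $G$ two more chords: $xy$ between two antipodal interior vertices of $Y$, and $x'y'$ between interior vertices of the two sub-arcs of $Y\setminus\{x,y\}$, with $x',y'$ adjacent to neither $\{x,y\}$ nor $B$. The set $S^*=\{x,y\}$ satisfies the first alternative of Lemma \ref{lem:struct-hamil1} (it is connected in $G'$ and $\balance(H'\setminus S^*)\approx m/2$), and your argument, which uses that lemma as a black box, must handle it; but for $S=S^*\cup B$ the chord $x'y'$ reconnects $G\setminus S$, so $\balance(G\setminus S)=0$, while $N_G(S)$ consists of at most six unit-weight vertices, far below any constant fraction of $m$. (In this example $S=S^*\cup B$ is not even connected, since the only lift of the $G'$-edge $xy$ is the direct chord, so the ``tethering'' claim also fails.) Converting ``$H$ minus a set is unbalanced'' into ``$G$ minus a connected superset is unbalanced or has heavy neighborhood'' is precisely what Lemma \ref{lem:struct-hamil2} is for; your case (i) re-poses that problem on the reduced graph and dismisses it with a false claim, so the reduction is circular in spirit.

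Case (ii) is wrong as well: you assert that conclusion (2) always holds there, but it can fail for every connected $S\supset A$. Take the same $G$ without the extra chords $xy,x'y'$: then $G'$ is a plain $m$-cycle with uniform weights, so the first alternative of Lemma \ref{lem:struct-hamil1} is unavailable (removing a connected set, i.e.\ an arc, from a cycle leaves a connected remainder, giving balance $0$), and the lemma must output a cycle set, e.g.\ $S^*=Y$ with $w(S^*)=m$. But in $G$, a cycle plus one chord, every connected $S$ has $\size{N_G(S)}\leq 6$, hence $w(N_G(S))=O(1)$ while $\balance(H\setminus A)=m$; the lemma is satisfied only through conclusion (1), e.g.\ with $S=A$, which your walk-around-the-cycle dichotomy never produces---and indeed neither of its branches triggers, since no cycle vertex is heavy and the only component of $G\setminus S^*$ is the one containing $B$. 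Both halves of the proposal therefore need to be replaced by an argument that genuinely confronts the non-$H$ edges of $G$, which is what the paper's block decomposition accomplishes.
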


\begin{proof}
We show that it is enough to set
\begin{equation}
\label{eq:struct-hamil2-c}
c=1/5.
\end{equation}

\begin{figure}[t]
\vspace*{-2.5ex}
\centering
\begin{tikzpicture}[scale=0.8]
  \tikzstyle{every node}=[circle,draw,minimum size=5pt,inner sep=0pt]
  \tikzstyle{every label}=[rectangle,draw=none,label distance=4pt]
  \foreach\i in {0,...,6} \node (a\i) at (\i,0) {};
  \node[fill,label=below:$u$] (a7) at (7,0) {};
  \foreach\i in {8,...,15} \node (a\i) at (\i,0) {};
  \node[fill,label=below:$v$] (a16) at (16,0) {};
  \path (a0) edge (a1) edge[bend left=30] (a2) edge[bend left=30] (a16);
  \path (a1) edge (a2) edge[bend left=35] (a5);
  \path (a2) edge (a3);
  \path (a3) edge (a4) edge[bend left=35] (a7);
  \path (a4) edge (a5);
  \path (a5) edge (a6) edge[bend left=35] (a8) edge[bend left=40] (a11);
  \path (a6) edge (a7) edge[bend left=30] (a11);
  \path (a7) edge (a8) edge[bend left=30] (a16);
  \path (a8) edge (a9) edge[bend left=30] (a10);
  \path (a9) edge (a10) edge[bend left=35] (a13);
  \path (a10) edge (a11);
  \path (a11) edge (a12);
  \path (a12) edge (a13) edge[bend left=30] (a14);
  \path (a13) edge (a14);
  \path (a14) edge (a15);
  \path (a15) edge (a16);
  \tikzstyle{every node}=[below,inner sep=3pt]
  \draw (-0.3,-0.2)--(-0.3,-0.4)--node {$B_1$}(2.3,-0.4)--(2.3,-0.2);
  \draw (2.7,-0.2)--(2.7,-0.4)--node {$B_2$}(4.3,-0.4)--(4.3,-0.2);
  \draw (4.7,-0.2)--(4.7,-0.4)--node {$B_3$}(5.3,-0.4)--(5.3,-0.2);
  \draw (5.7,-0.2)--(5.7,-0.4)--node {$B_4$}(6.3,-0.4)--(6.3,-0.2);
  \draw (7.7,-0.2)--(7.7,-0.4)--node {$B_5$}(10.3,-0.4)--(10.3,-0.2);
  \draw (10.7,-0.2)--(10.7,-0.4)--node {$B_6$}(12.3,-0.4)--(12.3,-0.2);
  \draw (12.7,-0.2)--(12.7,-0.4)--node {$B_7$}(13.3,-0.4)--(13.3,-0.2);
  \draw (13.7,-0.2)--(13.7,-0.4)--node {$B_8$}(14.3,-0.4)--(14.3,-0.2);
  \draw (14.7,-0.2)--(14.7,-0.4)--node {$B_9$}(15.3,-0.4)--(15.3,-0.2);
\end{tikzpicture}
\caption{Illustration for the proof of Lemma \ref{lem:struct-hamil2}: $A=\{u,v\}$, $\calB_0=\{B_1,B_3,B_4,B_6,B_8,B_9\}$, $\calB_1=\{B_2,B_5,B_7\}$}
\label{fig:struct-hamil2}
\end{figure}
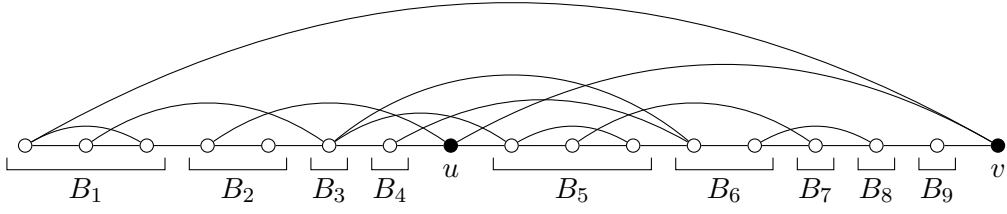

Let $G$ be a weighted graph containing a Hamiltonian cycle $H$, and let $A$ be a connected subset of $V(G)$.
If $H\setminus A$ is connected, then $\balance(H\setminus A)=0$ and the conclusion holds trivially.
Thus assume that $H\setminus A$ has at least two components.
Let $P$ be an oriented Hamiltonian path in $G$ obtained by orienting $H$ in any of the two directions and then removing the edge going out of an arbitrarily chosen vertex of $A$.
Thus the last vertex of $P$ belongs to $A$.
We partition the set $V(G)\setminus A$ into \emph{blocks} $B_1,\ldots,B_n$, which are intervals in the order $<_P$.
We construct them one by one in the order of their indices as follows.
Let $B_i$ be the interval $[f_i,u_i)_P$ such that
\begin{itemize}
\item $f_i$ is the least vertex in $<_P$ with $f_i\notin A\cup B_1\cup\cdots\cup B_{i-1}$,
\item $u_i$ is the least vertex in $<_P$ with $f_i<_Pu_i$ and $u_i\in N[A\cup B_1\cup\cdots\cup B_{i-1}]$.
\end{itemize}
Notice that if $u_i\notin A$, then $u_i=f_{i+1}$.
Now, we partition the family $\{B_1,\ldots,B_n\}$ of all blocks into two subfamilies $\calB_0$ and $\calB_1$ as follows.
We process the blocks in the order of their indices.
We put $B_i$ into $\calB_0$ if $u_i\in N[A]$ or $u_i$ is adjacent to at least one of $B_1,\ldots,B_{i-1}$ that has been already put into $\calB_1$.
Otherwise, we put $B_i$ into $\calB_1$.
See Figure \ref{fig:struct-hamil2}.
It follows from the presented construction that
\begin{enumeratea}
\item\label{item:block-a} $f_i\in N(A)$ or $f_i$ is adjacent to at least one block of $\{B_1,\ldots,B_{i-1}\}\cap\calB_0$ and at least one block of $\{B_1,\ldots,B_{i-1}\}\cap\calB_1$;
\item\label{item:block-b} no vertex in $B_i\setminus\{f_i\}$ is adjacent to $A\cup B_1\cup\cdots\cup B_{i-1}$.
\end{enumeratea}
For $k\in\{0,1\}$, define
\begin{equation*}
A'_k=A\cup\tbigcup\calB_{1-k},\qquad F_k=\{f_i\colon B_i\in\calB_k\},\qquad A''_k=A'_k\cup F_k.
\end{equation*}
It follows from \ref{item:block-a} that $G[A'_k]$ is connected---every block $B_i\in\calB_{1-k}$ is connected to $A$ directly or through blocks from $\{B_1,\ldots,B_{i-1}\}\cap\calB_{1-k}$.
It also follows from \ref{item:block-a} that $F_k\subset N(A'_k)$.
Therefore, if
\begin{equation*}
w(F_k)\geq c\balance(H\setminus A),
\end{equation*}
then 
\begin{equation*}
w(N(A'_k))\geq c\balance(H\setminus A)
\end{equation*}
and hence \ref{item:struct-hamil2-2} holds for $S=A'_k$.
Thus assume
\begin{equation}
\label{eq:w(F_k)}
w(F_k)<c\balance(H\setminus A)\quad\text{for }k\in\{0,1\}.
\end{equation}
Since $F_k\subset N(A'_k)$ and $G[A'_k]$ is connected, $G[A''_k]$ is connected too.
If
\begin{equation*}
\balance(G\setminus A''_k)\geq c\balance(H\setminus A),
\end{equation*}
then \ref{item:struct-hamil2-1} holds for $S=A''_k$.
Thus assume
\begin{equation}
\label{eq:comps}
\balance(G\setminus A''_k)<c\balance(H\setminus A)\quad\text{for }k\in\{0,1\}.
\end{equation}
By \ref{item:block-b}, the components of $G\setminus A''_k$ are precisely the subgraphs $G[B_i\setminus\{f_i\}]$ for $B_i\in\calB_k$.
Let $C_k$ be a maximum weight set of the form $B_i\setminus\{f_i\}$ with $B_i\in\calB_k$.
Since $\{F_0,V(G)\setminus A''_0,F_1,V(G)\setminus A''_1\}$ is a partition of $V(G)\setminus A$, we have
\begin{gather*}
\begin{aligned}
&w(F_0)+\balance(G\setminus A''_0)+w(C_0)+w(F_1)+\balance(G\setminus A''_1)+w(C_1)\\
&\quad=w(F_0)+w(G\setminus A''_0)+w(F_1)+w(G\setminus A''_1)\\
&\quad=w(G\setminus A)=\balance(H\setminus A)+\gamma,
\end{aligned}
\end{gather*}
where
\begin{equation*}
\gamma=\max_{C\in\calC(H\setminus A)}w(C).
\end{equation*}
The above together with \eqref{eq:w(F_k)} and \eqref{eq:comps} implies
\begin{gather}
\label{eq:w(C_0)+w(C_1)}
\begin{alignedat}{2}
w(C_0)+w(C_1)&>(1-4c)\balance(H\setminus A)+\gamma\\
&=c\balance(H\setminus A)+\gamma&&\text{by \eqref{eq:struct-hamil2-c}}.
\end{alignedat}
\end{gather}
Since each $C_k$ is contained in one component of $H\setminus A$, we have
\begin{equation}
\label{eq:w(C_k)}
w(C_k)\leq\gamma\quad\text{for }k\in\{0,1\},
\end{equation}
If $C_0$ and $C_1$ are contained in the same component of $H\setminus A$, then we have
\begin{equation*}
w(C_0)+w(C_1)\leq\gamma,
\end{equation*}
which contradicts \eqref{eq:w(C_0)+w(C_1)}.
Thus $C_0$ and $C_1$ are contained in distinct components of $H\setminus A$.
It follows that $H\setminus(C_0\cup C_1)$ consists of two components each containing a vertex from $A$.
This implies that $G\setminus(C_0\cup C_1)$ is connected, as it contains the whole connected set $A$.
Moreover, $G[C_0]$ and $G[C_1]$ are precisely the components of $G[C_0\cup C_1]$, and thus we have
\begin{gather*}
\begin{alignedat}{2}
\balance(G[C_0\cup C_1])&=w(C_0)+w(C_1)-\max\{w(C_0),w(C_1)\}\\
&\geq w(C_0)+w(C_1)-\gamma&&\text{by \eqref{eq:w(C_k)}}\\
&>c\balance(H\setminus A)&&\text{by \eqref{eq:w(C_0)+w(C_1)}}.
\end{alignedat}
\end{gather*}
This shows that \ref{item:struct-hamil2-1} is satisfied for $S=V(G)\setminus(C_0\cup C_1)$.
\end{proof}

\begin{corollary}
\label{cor:struct-hamil}
There is\/ $c\in(0,1]$ such that every weighted graph\/ $G$ containing a Hamiltonian cycle satisfies at least one of the following conditions:
\begin{enumerate}
\item\label{item:struct-hamil-1} There is a connected set\/ $S\subset V(G)$ such that
\begin{equation*}
\balance(G\setminus S)\geq cw(G).
\end{equation*}
\item\label{item:struct-hamil-2} There is a connected set\/ $S\subset V(G)$ such that
\begin{equation*}
w(N(S))\geq cw(G).
\end{equation*}
\item\label{item:struct-hamil-3} There is a set\/ $S\subset V(G)$ such that\/ $\link{G}{S}$ is a cycle and
\begin{equation*}
w(S)\geq cw(G).
\end{equation*}
\end{enumerate}
\end{corollary}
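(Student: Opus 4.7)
The plan is to chain Lemmas \ref{lem:struct-hamil1} and \ref{lem:struct-hamil2} directly. First I would apply Lemma \ref{lem:struct-hamil1} to the weighted graph $G$ together with its Hamiltonian cycle $H$, obtaining some constant $c_1\in(0,1]$. If the second alternative of that lemma is returned---a set $S$ with $\link{G}{S}$ a cycle and $w(S)\geq c_1w(G)$---then that very $S$ witnesses condition \ref{item:struct-hamil-3} of the corollary, and I am done.

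Otherwise Lemma \ref{lem:struct-hamil1} yields a connected set $A\subset V(G)$ with $\balance(H\setminus A)\geq c_1w(G)$. The key observation is that connectedness of $A$ is precisely the hypothesis that Lemma \ref{lem:struct-hamil2} places on its input set, so I would feed this $A$ (with the same $G$ and $H$) into Lemma \ref{lem:struct-hamil2}, whose constant I denote $c_2\in(0,1]$. Its two alternatives produce a connected set $S\supset A$ satisfying either
\[
\balance(G\setminus S)\geq c_2\balance(H\setminus A)\geq c_1c_2\,w(G)
\]
or
\[
w(N(S))\geq c_2\balance(H\setminus A)\geq c_1c_2\,w(G),
\]
which yield conditions \ref{item:struct-hamil-1} and \ref{item:struct-hamil-2} of the corollary, respectively.

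Setting $c=c_1c_2$ (concretely $c=1/25$ using the explicit values from the two preceding lemmas) completes the argument. I do not expect any real obstacle: all the substantive work has already been carried out inside the two lemmas, and this corollary is essentially bookkeeping. The only point to verify is that the connected set produced in case \ref{item:struct-hamil1-1} of Lemma \ref{lem:struct-hamil1} has exactly the shape required to serve as the input set $A$ of Lemma \ref{lem:struct-hamil2}, which it does by construction, and that the lower bound $\balance(H\setminus A)\geq c_1w(G)$ propagates through the $c_2$ factor into a bound proportional to $w(G)$, which it does trivially.
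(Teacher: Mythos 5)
Your proposal is correct and follows exactly the paper's own argument: apply Lemma \ref{lem:struct-hamil1} first, pass its cycle case directly to condition \ref{item:struct-hamil-3}, and otherwise feed the connected set $A$ with $\balance(H\setminus A)\geq c_1w(G)$ into Lemma \ref{lem:struct-hamil2}, taking $c=c_1c_2$. Nothing is missing; this matches the paper's proof of the corollary step for step.
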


\begin{proof}
We show that it is enough to set $c=c'c''$, where $c'$ and $c''$ are the constants claimed by Lemmas \ref{lem:struct-hamil1} and \ref{lem:struct-hamil2}, respectively.

Let $G$ be a weighted graph containing a Hamiltonian cycle $H$.
By Lemma \ref{lem:struct-hamil1}, at least one of the following holds:
\begin{enumerate'}
\item\label{item:struct-hamil-1'} There is a connected set $A\subset V(G)$ such that
\begin{equation*}
\balance(H\setminus A)\geq c'w(G).
\end{equation*}
\item\label{item:struct-hamil-2'} There is a set $S\subset V(G)$ such that $\link{G}{S}$ is a cycle and
\begin{equation*}
w(S)\geq c'w(G).
\end{equation*}
\end{enumerate'}
If \ref{item:struct-hamil-2'} holds, then it directly implies \ref{item:struct-hamil-3}.
Thus assume \ref{item:struct-hamil-1'}.
By Lemma \ref{lem:struct-hamil2}, at least one of the following holds:
\begin{enumerate''}
\item There is a connected set $S\subset V(G)$ such that $A\subset S$ and
\begin{equation*}
\balance(G\setminus S)\geq c''\balance(H\setminus A)\geq cw(G).
\end{equation*}
\item There is a connected set $S\subset V(G)$ such that $A\subset S$ and
\begin{equation*}
w(N(S))\geq c''\balance(H\setminus A)\geq cw(G).
\end{equation*}
\end{enumerate''}
Hence \ref{item:struct-hamil-1} or \ref{item:struct-hamil-2} follows, respectively.
\end{proof}

\subsection{General graphs}

The following lemma, first proved by Kao \cite{Kao88}, allows us to reduce the problem for general graphs to the problem for Hamiltonian graphs.
We include a proof for the reader's convenience.

\begin{lemma}[Kao {\cite[Theorem 3]{Kao88}}]
\label{lem:struct-cycle}
Every weighted connected graph\/ $G$ contains a cycle\/ $H$ such that every component of\/ $G\setminus V(H)$ has weight at most\/ $\tfrac{1}{2}w(G)$.
\end{lemma}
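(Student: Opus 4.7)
The proof naturally proceeds by induction on $|V(G)|$. The base cases $|V(G)|\leq 2$ are immediate: take $H=G$, a cycle of length $1$ or $2$.

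For the inductive step, I would form the block-cut tree $T_{BC}$ of $G$, weighted so that each cut-vertex node $c$ carries weight $w(c)$ and each block node $B$ carries $\sum\{w(v)\colon v\in V(B),\,v\text{ not a cut vertex of }G\}$; then the total weight of $T_{BC}$ equals $w(G)$. The classical centroid theorem for weighted trees furnishes a node $v^\star$ such that every component of $T_{BC}\setminus\{v^\star\}$ has weight at most $\tfrac12 w(G)$. If $v^\star$ represents a cut vertex $c$, the components of $T_{BC}\setminus\{v^\star\}$ are in weight-preserving bijection with the components of $G\setminus\{c\}$, so $H=\{c\}$ works. If $v^\star$ represents a block $B$, each subtree of $T_{BC}\setminus\{v^\star\}$ is rooted at some cut vertex $c\in V(B)$ and has weight $w(c)+\sum_D w(D)$, the sum running over components $D$ of $G\setminus V(B)$ attached at $c$; in particular every such $D$ satisfies $w(D)\leq\tfrac12 w(G)$. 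When $B$ is a single edge, take $H=B$. Otherwise $B$ is $2$-connected with $|V(B)|\geq 3$, and I would assign to $V(B)$ the virtual weights $w^\star(v)=w(v)+\sum_{D\colon c_D=v}w(D)$, making $w^\star(V(B))=w(G)$. Any cycle $H\subseteq B$ whose removal leaves only components of $w^\star$-weight at most $\tfrac12 w(G)$ then solves the original problem, because the components of $G\setminus V(H)$ arise from the components of $B\setminus V(H)$ pooled with their attached external branches and carry matching total weights. Provided $B\neq G$, the induction hypothesis applied to $B$ with $w^\star$ furnishes the required cycle.

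The remaining case is when $G$ itself is $2$-connected, where the block-cut reduction does not shrink the problem. If some vertex $v$ has $w(v)\geq\tfrac12 w(G)$ then $H=\{v\}$ works, since $G\setminus\{v\}$ is connected of weight at most $\tfrac12 w(G)$. Otherwise I would take $H$ to be a cycle in $G$ maximizing $w(V(H))$ and derive a contradiction from the existence of a component $D$ of $G\setminus V(H)$ with $w(D)>\tfrac12 w(G)$: by $2$-connectivity, $D$ has at least two attachment vertices $u,v\in V(H)$, and a carefully chosen $u$-$v$ path $P$ internal to $D$ can be used to replace one of the two $u$-$v$ arcs of $H$, yielding a cycle of strictly larger weight and contradicting the maximality of $H$. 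Making this rerouting genuinely increase the weight---through a careful choice of the pair $u,v$ and of the replacement path, exploiting the hypothesis $w(D)>\tfrac12 w(G)$---is the most delicate point of the argument and I expect it to be the principal obstacle.
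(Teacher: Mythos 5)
Your block-cut-tree reduction with the virtual weights $w^\star$ is sound as far as it goes (and the degenerate cycles of length $1$ and $2$ are exactly what the paper's convention on cycles is for), but it only postpones the real difficulty to the $2$-connected case, and there your plan has a genuine gap: a maximum-weight cycle need not satisfy the conclusion, so the contradiction you aim for cannot be derived by any choice of attachment pair or replacement path. Concretely, take the $4$-cycle $uxvy$ with $w(x)=w(y)=3$ and $w(u)=w(v)=0$, add a vertex $d$ of weight $0$ and vertices $\ell_1,\ldots,\ell_8$ of weight $1$, all adjacent to $d$, with $\ell_1,\ldots,\ell_4$ also adjacent to $u$ and $\ell_5,\ldots,\ell_8$ also adjacent to $v$. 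This graph is $2$-connected, $w(G)=14$, and every vertex has weight less than $7$, so your earlier cases do not apply. Every cycle meeting $D=\{d,\ell_1,\ldots,\ell_8\}$ must pass through $d$ and exactly two of the $\ell_i$, hence has weight at most $1+0+1+3=5$, while $uxvy$ has weight $6$; thus the unique maximum-weight cycle is $H=uxvy$, and the single component of $G\setminus V(H)$ has weight $8>\tfrac{1}{2}w(G)$. No rerouting through $D$ yields a heavier cycle, because no cycle through $D$ is heavier at all. (The lemma still holds for this graph, but only via a non-maximum cycle such as $u\,\ell_1\,d\,\ell_5\,v\,x\,u$.) So the step you flagged as the principal obstacle is not merely delicate: with ``maximum weight'' as the selection rule it is false, and a different rule for choosing the cycle is required.

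For comparison, the paper needs no $2$-connectivity reduction and no extremal choice of cycle. It takes a depth-first-search spanning tree $T$ of $G$ rooted at an arbitrary vertex $r$, so that every edge of $G$ joins a vertex to one of its ancestors or descendants in $T$, and picks a vertex $v$ such that every component of $T\setminus\{v\}$ has weight at most $\tfrac{1}{2}w(G)$. If $v=r$, the DFS property shows that no edge of $G$ joins two components of $T\setminus\{v\}$, so $H=\{v\}$ works. Otherwise, among the edges $xy$ joining the component of $T\setminus\{v\}$ containing $r$ to the union of the other components, it picks one with $x$ on the tree path from $r$ to $v$ and as close to $r$ as possible, and lets $H$ be the cycle formed by $xy$ together with the tree path between $x$ and $y$. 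Since $H$ contains the whole tree path from $x$ to $v$, every component of $G\setminus V(H)$ lies inside a single component of $T\setminus\{v\}$ and hence has weight at most $\tfrac{1}{2}w(G)$. You could substitute an argument of this kind for your $2$-connected case, or drop the block decomposition altogether.
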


\begin{proof}
Let $G$ be a weighted connected graph.
A tree $T$ is a \emph{spanning tree} of $G$ if $V(T)=V(G)$ and $E(T)\subset E(G)$.
For a spanning tree $T$ of $G$ and an edge $uv$ of $G$, let $T_{uv}$ denote the set of vertices of the unique path in $T$ connecting $u$ and $v$.
Choose an arbitrary vertex $r$ of $G$.
A standard depth-first search procedure started from $r$ constructs a spanning tree $T$ of $G$ with the following property:
\begin{equation}
\label{eq:dfs}
\text{for every edge $xy$ of $G$, either $x\in T_{ry}$ or $y\in T_{rx}$.}
\end{equation}

We find a vertex $v$ in $T$ such that every component of $T\setminus\{v\}$ has weight at most $\tfrac{1}{2}w(T)$.
This can be done as follows.
Orient every edge $uw$ of $T$ as $u\to w$ so that after removing the edge $uw$ from $T$, the total weight of the component containing $u$ is at most the total weight of the component containing $w$ (picking the orientation arbitrarily if the two weights are equal).
The oriented tree obtained this way must have a sink (a vertex of out-degree $0$).
If $v$ is a sink, then every component of $T\setminus\{v\}$ has weight at most $\tfrac{1}{2}w(T)$, otherwise the edge connecting $v$ with the component with weight greater than $\tfrac{1}{2}w(T)$ would have been oriented outwards $v$.

If $v=r$, then no edge of $G$ connects two distinct components of $T\setminus\{v\}$, as such an edge would contradict \eqref{eq:dfs}.
In this case, the conclusion follows by choosing $H=\{v\}$.
Now, suppose $v\neq r$.
Let $C$ be the component of $T\setminus\{v\}$ containing $r$, and let $C^\star$ be the union of all other components of $T\setminus\{v\}$.
By \eqref{eq:dfs}, all edges of $G$ connecting two distinct components of $T\setminus\{v\}$ go between $C$ and $C^\star$.
Let $xy$ be an edge of $G$ connecting $x\in V(C)$ and $y\in V(C^\star)$ and minimizing the distance between $r$ and $x$ in $T$.
It follows that $x\in T_{rv}$, otherwise $xy$ would contradict \eqref{eq:dfs}.
Every other edge $x'y'\in E(G)$ connecting $x'\in V(C)$ and $y'\in V(C^\star)$ also satisfies $x'\in T_{rv}$, and therefore, by the choice of $x$, it satisfies $x'\in T_{xv}$.
This shows that the vertex set of every component of $G\setminus T_{xv}$ is entirely contained in one component of $T\setminus\{v\}$.
Let $H$ be the cycle formed by the edge $xy$ and the unique path in $T$ between $x$ and $y$.
Since $T_{xv}\subset V(H)$, the vertex set of every component of $G\setminus V(H)$ is entirely contained in one component of $T\setminus\{v\}$.
Therefore, since every component of $T\setminus\{v\}$ has weight at most $\tfrac{1}{2}w(T)$, every component of $G\setminus V(H)$ has weight at most $\tfrac{1}{2}w(G)$.
\end{proof}

\begin{corollary}
\label{cor:struct-full}
There is a constant $c\in(0,1]$ such that every weighted connected graph $G$ satisfies at least one of the following conditions:
\begin{enumerate}
\item\label{item:struct-full-1} There is a connected set $S\subset V(G)$ such that
\begin{equation*}
\balance(G\setminus S)\geq cw(G).
\end{equation*}
\item\label{item:struct-full-2} There is a connected set $S\subset V(G)$ such that
\begin{equation*}
w(N(S))\geq cw(G).
\end{equation*}
\item\label{item:struct-full-3} There is a set $S\subset V(G)$ such that $\link{G}{S}$ is a cycle and
\begin{equation*}
w(S)\geq cw(G).
\end{equation*}
\end{enumerate}
\end{corollary}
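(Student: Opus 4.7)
The plan is to reduce to the Hamiltonian setting of Corollary \ref{cor:struct-hamil} by means of Lemma \ref{lem:struct-cycle}. First I would fix, by Lemma \ref{lem:struct-cycle}, a cycle $H$ in $G$ such that every component of $G\setminus V(H)$ has weight at most $\tfrac{1}{2}w(G)$. Depending on the weight of $V(H)$, I would then proceed in one of two ways.

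If $w(V(H)) < \tfrac{1}{4}w(G)$, the connected set $S := V(H)$ alone witnesses condition (1): the total weight of $G\setminus V(H)$ exceeds $\tfrac{3}{4}w(G)$ while each of its components weighs at most $\tfrac{1}{2}w(G)$, giving $\balance(G\setminus V(H))\geq\tfrac{1}{4}w(G)$. Otherwise $w(V(H))\geq\tfrac{1}{4}w(G)$, and I would apply Corollary \ref{cor:struct-hamil} to the weighted graph $G' := \link{G}{V(H)}$ with the inherited weights $w|_{V(H)}$; since $G'$ has $H$ as a Hamiltonian cycle and total weight at least $\tfrac{1}{4}w(G)$, this produces $S\subseteq V(H)$ satisfying one of conditions (1$'$), (2$'$), or (3$'$) of Corollary \ref{cor:struct-hamil} with right-hand side of order $w(G)$.

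It remains to lift each of the three conditions. Condition (3$'$) transfers immediately, because $\link{G}{S} = \link{\link{G}{V(H)}}{S} = \link{G'}{S}$ whenever $S\subseteq V(H)$, which I would verify as an easy consequence of the definition of reduction. For (1$'$) and (2$'$), where $S$ is connected in $G'$, I would take $\hat{S} := S\cup\bigcup\{V(D):D\in\calC(G\setminus V(H)),\ N_G(V(D))\cap S\neq\emptyset\}$. Its connectedness in $G$ follows from the fact that every edge of $G'$ inside $S$ is realized by some component of $G\setminus V(H)$ attached to both endpoints, and hence fully contained in $\hat{S}$. For (2$'$), any $u\in N_{G'}(S)$ is linked to some $v\in S$ via a component $D$ attached to both $u$ and $v$; since $V(D)\subseteq\hat{S}$ and $u\in V(H)\setminus\hat{S}$ is adjacent in $G$ to a vertex of $V(D)$, I get $u\in N_G(\hat{S})$, hence $w(N_G(\hat{S}))\geq w(N_{G'}(S))$.

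The main technical obstacle is the lift of (1$'$). I plan to establish a component correspondence: letting $G_0$ denote the subgraph of $G'$ retaining only those edges realized by components of $G\setminus V(H)$ whose neighborhoods are disjoint from $S$, each component of $G\setminus\hat{S}$ is the union of a component $C^*$ of $G_0[V(H)\setminus S]$ with all the components of $G\setminus V(H)$ whose neighborhoods lie entirely inside $V(C^*)$. Writing the weight of each such extended component as the sum of an internal part $a_i$ (from $V(H)$) and a nonnegative extension part $b_i$, the elementary inequality $\max_i(a_i+b_i)\leq\max_i a_i + \max_i b_i$ yields $\balance(G\setminus\hat{S})\geq\balance_{G_0}(G_0[V(H)\setminus S])\geq\balance_{G'}(G'[V(H)\setminus S])$, where the second inequality uses that $G_0[V(H)\setminus S]$ partitions $V(H)\setminus S$ into finer pieces than $G'[V(H)\setminus S]$. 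Setting up this correspondence cleanly requires care in tracking how components of $G\setminus V(H)$ attach to vertices in $S$, in $V(H)\setminus S$, or in both, but the remaining inequalities are routine.
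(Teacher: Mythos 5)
Your proposal is correct and takes essentially the same route as the paper: Lemma \ref{lem:struct-cycle} to obtain $H$, a weight threshold on $V(H)$ that settles condition (1) directly, Corollary \ref{cor:struct-hamil} applied to $\link{G}{V(H)}$, and the same lifting of the three outcomes (your $\hat S$ coincides with the paper's set of vertices reachable from $S$ by paths containing no other vertex of $\link{G}{V(H)}$). The only differences are cosmetic — a cruder threshold $\tfrac14$ instead of the optimized $(\tfrac12-c)$, and a more explicit component correspondence in the lift of (1$'$), which is if anything more careful than the paper's one-line claim; just make sure your $G_0$ bookkeeping also retains edges of $\link{G}{V(H)}$ that are plain edges of $G$, a trivial case.
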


\begin{proof}
Let $c'\in(0,1]$ be the constant claimed by Corollary \ref{cor:struct-hamil}.
Set
\begin{equation*}
c=\frac{c'}{2(1+c')}.
\end{equation*}
It follows that
\begin{equation}
\label{eq:struct-full-c}
c'(\tfrac{1}{2}-c)=c.
\end{equation}
We show that this is enough for the conclusion of the lemma.

Let $G$ be a weighted connected graph.
By Lemma \ref{lem:struct-cycle}, there is a cycle $H$ in $G$ such that every component of $G\setminus V(H)$ has weight at most $\tfrac{1}{2}w(G)$.
If $w(H)\leq(\tfrac{1}{2}-c)w(G)$, then the conclusion \ref{item:struct-full-1} with $S=V(H)$ follows:
\begin{equation*}
\balance(G\setminus S)\geq w(G\setminus S)-\tfrac{1}{2}w(G)=\tfrac{1}{2}w(G)-w(H)\geq cw(G).
\end{equation*}
Thus assume $w(H)>(\tfrac{1}{2}-c)w(G)$.
Let $G'=\link{G}{V(H)}$.
By \eqref{eq:struct-full-c}, we have
\begin{equation*}
c'w(G')=c'w(H)>c'(\tfrac{1}{2}-c)w(G)=cw(G).
\end{equation*}
By Corollary \ref{cor:struct-hamil} and by the above, at least one of the following holds:
\begin{enumerate'}
\item\label{item:struct-full-1'} There is a connected set $S'\subset V(G')$ such that
\begin{equation*}
\balance(G'\setminus S')\geq c'w(G')>cw(G).
\end{equation*}
\item\label{item:struct-full-2'} There is a connected set $S'\subset V(G')$ such that
\begin{equation*}
w(N_{G'}(S'))\geq c'w(G')>cw(G).
\end{equation*}
\item\label{item:struct-full-3'} There is a set $S'\subset V(G')$ such that $\link{G'}{S'}$ is a cycle and
\begin{equation*}
w(S')\geq c'w(G')>cw(G).
\end{equation*}
\end{enumerate'}
We show that each of the statements \ref{item:struct-full-1'}--\ref{item:struct-full-3'} above implies the corresponding statement \ref{item:struct-full-1}--\ref{item:struct-full-3} in the conclusion of the lemma.

Suppose that \ref{item:struct-full-1'} holds.
Let $S$ be the set of all vertices of $G$ reachable in $G$ by a path starting in $S'$ and containing no other vertex of $G'$.
Clearly, $S\cap V(G')=S'$.
If $uv$ is an edge of $G'[S']$, then the whole path from $u$ to $v$ in $G$ witnessing the edge $uv$ in $G'$ belongs to $S$.
Therefore, since $G'[S']$ is connected, $G[S]$ is connected too.
Moreover, any two vertices of $G'\setminus S'$ are connected by a path in $G'\setminus S'$ if and only if they are connected by a path in $G\setminus S$.
Consequently, if $C$ is a component of $G\setminus S$, then $C\cap V(G')$ is a component of $G'\setminus S'$ and hence
\begin{gather*}
\begin{aligned}
w(C)&=w(C\cap V(G'))+w(C\setminus V(G'))\\
&\leq\max_{C'\in\calC(G'\setminus S')}w(C')+w\bigl(G\setminus(S\cup V(G'))\bigr).
\end{aligned}
\end{gather*}
We conclude that \ref{item:struct-full-1} holds:
\begin{gather*}
\begin{aligned}
\balance(G\setminus S)&=w(G\setminus S)-\max_{C\in\calC(G\setminus S)}w(C)\\
&\geq w(G'\setminus S')+w\bigl(G\setminus(S\cup V(G'))\bigr)\\
&\qquad-\max_{C'\in\calC(G'\setminus S')}w(C')-w\bigl(G\setminus(S\cup V(G'))\bigr)\\
&=\balance(G'\setminus S')>cw(G).
\end{aligned}
\end{gather*}

Now, suppose that \ref{item:struct-full-2'} holds.
Again, let $S$ be the set of all vertices in $G$ reachable in $G$ by a path starting in $S'$ and containing no other vertex of $G'$.
As before, $S\cap V(G')=S'$ and $G[S]$ is connected.
Moreover, if $uv$ is an edge in $G'$ such that $u\in S'$ and $v\in N_{G'}(S')$, then $v\in N_G(S)$, as the entire path from $u$ to $v$ in $G$ witnessing the edge $uv$ in $G'$ except the vertex $v$ is included in $S$.
Therefore, $N_{G'}(S')\subset N_G(S)$ and hence \ref{item:struct-full-2} follows:
\begin{equation*}
w(N_G(S))\geq w(N_{G'}(S'))>cw(G).
\end{equation*}

Finally, suppose that \ref{item:struct-full-3'} holds.
Let $S=S'$.
We have
\begin{equation*}
w(S)=w(S')>cw(G).
\end{equation*}
Moreover, $\link{G}{S}=\link{G'}{S'}$ and hence \ref{item:struct-full-3} follows.
\end{proof}

\subsection{Graphs with a forbidden subdivision}

Recall that a graph $G$ contains a subdivision of a graph $H$ if $G$ has a subgraph $F$ that arises from $H$ by replacing every edge $uv\in E(H)$ by a path $F_{uv}$ between $u$ and $v$ so that the paths $F_{uv}$ are internally disjoint from $V(H)$ and from each other.
Hence the vertices of $H$ maintain their identity in the subdivision and, in particular, $V(H)\subset V(G)$.
This subtlety is important for the following lemma.

\begin{lemma}
\label{lem:struct-indsubdiv}
For any\/ $n\in\setN^+$ and\/ $m\in\{0,\ldots,\tbinom{n}{2}\}$, there is\/ $c_{n,m}\in(0,1]$ such that every weighted connected graph\/ $G$ and every connected non-empty proper subset\/ $A$ of\/ $V(G)$ satisfy at least one of the following conditions:
\begin{enumerate}
\item\label{item:struct-indsubdiv-1} There is a connected set\/ $S\subset V(G)$ such that\/ $A\subset S$ and
\begin{equation*}
w(N(A)\setminus S)-\max_{C\in\calC(G\setminus S)}w(C\cap N(A))\geq c_{n,m}w(N(A)).
\end{equation*}
\item\label{item:struct-indsubdiv-2} There is a vertex\/ $v\in N(A)$ such that
\begin{equation*}
w(v)\geq c_{n,m}w(N(A)).
\end{equation*}
\item\label{item:struct-indsubdiv-3} There is a graph\/ $H$ with\/ $n$ vertices and\/ $m$ edges such that\/ $V(H)\subset N(A)$ and\/ $G\setminus A$ contains a subdivision of\/ $H$.
\end{enumerate}
\end{lemma}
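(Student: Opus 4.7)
The plan is to prove the lemma by induction on $m$. For the base case $m=0$, a subdivision of the edgeless graph on $n$ vertices is just the $n$ vertices themselves, so condition (3) reduces to finding $n$ distinct vertices in $N(A)$. If $\size{N(A)} \geq n$, (3) holds trivially. Otherwise, since $N(A)$ is non-empty (as $G$ is connected and $A$ is a proper non-empty subset), pigeonhole gives some $v \in N(A)$ with $w(v) \geq w(N(A))/(n-1)$, so (2) holds with $c_{n,0}=1/(n-1)$.

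For the inductive step with $m \geq 1$, I apply the inductive hypothesis for $(n, m-1)$ with constant $c_{n,m-1}$. Conclusions (1) or (2) obtained there transfer directly to (1) or (2) for $m$ once $c_{n,m}\leq c_{n,m-1}$. The interesting case is when the inductive hypothesis yields a subdivision $F_0\subset G\setminus A$ of some graph $H_0$ on $n$ branch vertices $\{v_1,\dots,v_n\}\subset N(A)$ with $m-1$ edges. Setting $Y = V(G)\setminus(A\cup V(F_0))$, I attempt to extend $F_0$ by adding one more internally-disjoint path from some $v_i$ to some $v_j$ with $(i,j)\notin E(H_0)$. Such a path exists iff either $v_iv_j\in E(G)$ directly, or some component $C$ of $G[Y]$ has both $v_i,v_j$ in its $G$-neighborhood. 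If an extension exists, (3) is established for $m$ by taking $H=H_0+v_iv_j$.

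Otherwise every component $C$ of $G[Y]$ has $N(C)\cap V(H_0)$ inducing a clique in $H_0$. I set $S_0=A\cup V(F_0)$, which is connected in $G$ since $A$ is connected, each $v_i$ is adjacent to $A$, and every internal vertex of $F_0$ lies on a path joining two branch vertices. Writing $W=w(N(A))$, $W_0=w(N(A)\cap V(F_0))$, and $M=\max_C w(N(A)\cap C)$ over components $C$ of $G[Y]$, condition (1) with $S=S_0$ reads $W-W_0-M\geq c_{n,m}W$. If this holds, we are done. If it fails, then $W_0+M\geq(1-c_{n,m})W$, so either (i)~$W_0\geq(1-c_{n,m})W/2$, or (ii) some component $C^*$ is dominant with $w(N(A)\cap C^*)\geq(1-c_{n,m})W/2$. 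In case (i), either some vertex of $N(A)$ carries weight at least $c_{n,m}W$ (yielding (2)), or many vertices of $N(A)\cap V(F_0)$ must be internal to the paths of $F_0$; a rerouting argument promoting such an internal vertex to a branch vertex yields an alternative subdivision permitting an extension, hence (3) for $m$. In case (ii), I set $A'=V(G)\setminus C^*$ (connected and strictly containing $A$ since $A\cup V(F_0)\subset A'$, with $A\cup V(F_0)$ connected as above and each other component of $G[Y]$ adjacent to $A\cup V(F_0)$) and recursively apply the lemma to $(G,A')$; the inner recursion terminates because $\size{V(G)\setminus A}$ strictly decreases, and its conclusions are lifted back using the inclusion $N(A)\cap C^*\subset N(A')$ together with the fact that $T^*=N(C^*)\cap V(H_0)$ is a clique in $H_0$ of size at most $n-1$ (since $m\leq\binom{n}{2}$ prevents $H_0$ from being complete).

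The main obstacle is handling sub-case (i) cleanly via the rerouting argument, and tracking constants through the nested recursion in sub-case (ii) so that $c_{n,m}$ can be chosen uniformly (independent of $\size{V(G)}$). Multiplicative loss at each of the at most $\binom{n}{2}$ inductive steps will yield the final order $c_n=\Omega(2^{-\binom{n}{2}}n^{-1})$ asserted in the paper.
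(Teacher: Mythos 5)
Your base case and the overall plan (induction on $m$, adding one path to pass from $m-1$ to $m$ edges) agree with the paper, but the way you run the inductive step is different and it has two genuine gaps, both of which you yourself flag as ``obstacles''. First, in your sub-case (i) the weight of $N(A)$ may sit on many internal (non-branch) vertices of the subdivision $F_0$, each individually light; then condition (2) does not follow, condition (1) with $S_0=A\cup V(F_0)$ does not follow because that weight is inside $S_0$, and by assumption no extension of $F_0$ exists. The ``rerouting argument promoting an internal vertex to a branch vertex'' is not an argument: replacing a branch vertex by an internal vertex of one of its incident paths does not create the $m$-th edge, and there is no reason the modified branch set admits an extension when the original one does not. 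Second, in your sub-case (ii) the recursive call on $(G,A')$ with $A'=V(G)\setminus C^*$ does not lift back: its conclusions are stated relative to $N(A')$, which is \emph{not} contained in $N(A)$ (it contains vertices of $C^*$ adjacent only to $V(F_0)$ or to other components of $G[Y]$), so a heavy vertex of $N(A')$, a separating guarantee measured in $w(N(A'))$, or a branch set $V(H)\subset N(A')$ need not say anything about $N(A)$; the clique bound on $N(C^*)\cap V(H_0)$ does not repair this. Moreover this inner recursion stays at the same $(n,m)$, loses a factor of roughly $\tfrac12$ of the tracked weight at each level, and its depth is bounded only by $\size{V(G)}$, so as structured it cannot yield a constant $c_{n,m}$ independent of the size of $G$.

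The paper avoids both problems by never trying to extend an arbitrary given subdivision. Inside the dominant component $C$ it takes $B=V(C)\cap N(A)$, layers $\link{C}{B}$ by distance from a vertex $v_0$, and either finds a light layer $B_j$ (giving condition (1) with $S=A\cup B_j$) or a heavy layer $B_j$; in the latter case it applies the induction hypothesis (with $m-1$) to the enlarged set $A'$ of vertices reachable from $A$ avoiding $B_j$. The point of this construction is twofold: $N(A')=B_j\subseteq N(A)$, so all three conclusions for $(G,A')$ lift back to $(G,A)$ with only the constant-factor loss encoded in $\beta_{n,m}$; and every two vertices of $B_j$ are joined, via $v_0$, by paths in $G\setminus A$ whose interiors lie in $A'$, so \emph{any} missing edge of the returned $(m-1)$-edge graph $H'$ with $V(H')\subset B_j$ can be added by a path internally disjoint from the subdivision, giving (3). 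The recursion depth is thus exactly $m\leq\tbinom{n}{2}$, which is what produces a uniform $c_{n,m}=\Theta(2^{-m}n^{-1})$. In short, your proposal is a different route whose two critical steps (the rerouting in (i) and the lift plus constant control in (ii)) are missing, and the paper's layer construction is precisely the device that makes those steps unnecessary.
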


\begin{proof}
Fix $n\in\setN^+$.
If $n=1$, then \ref{item:struct-indsubdiv-3} holds trivially, so assume $n\geq 2$.
The proof goes by induction on $m$.
If $m=0$, then it is enough to set
\begin{equation*}
c_{n,0}=\frac{1}{n-1}.
\end{equation*}
Indeed, let $G$ be a weighted connected graph and $A$ be a connected non-empty proper subset of $V(G)$.
If $\size{N(A)}\leq n-1$, then the heaviest vertex in $N(A)$ satisfies \ref{item:struct-indsubdiv-2}.
If $\size{N(A)}\geq n$, then $G\setminus A$ contains a subgraph $H$ with $n$ vertices and no edges.

Thus assume $1\leq m\leq\tbinom{n}{2}$.
We show that it is enough to set
\begin{gather}
\label{eq:struct-ind-beta}
\beta_{n,m}=\frac{c_{n,m-1}}{2(1+c_{n,m-1})},\\
\label{eq:struct-ind-c}
c_{n,m}=\frac{\beta_{n,m}}{1+\beta_{n,m}}.
\end{gather}

Let $G$ be a weighted connected graph and $A$ be a connected non-empty proper subset of $V(G)$.
If every component $C$ of $G\setminus A$ satisfies
\begin{equation*}
w(C\cap N(A))\leq(1-c_{n,m})w(N(A)),
\end{equation*}
then \ref{item:struct-indsubdiv-1} holds for $S=A$.
Thus assume there is a component $C$ of $G\setminus A$ with
\begin{equation*}
w(C\cap N(A))>(1-c_{n,m})w(N(A)).
\end{equation*}
Define $B=V(C)\cap N(A)$.
It follows that
\begin{gather}
\label{eq:singlecomp}
\begin{aligned}
w(B)&>(1-c_{n,m})w(N(A))\\
&=\frac{1}{1+\beta_{n,m}}w(N(A))\qquad\text{by \eqref{eq:struct-ind-c}}.
\end{aligned}
\end{gather}
Choose any $v_0\in B$.
For $i\in\setN$, let $B_i$ be the set of vertices at distance $i$ from $v_0$ in $\link{C}{B}$.
Let $k$ be the greatest index for which $B_k\neq\emptyset$.
Clearly, the sets $B_0,\ldots,B_k$ form a partition of $B$.
Moreover, $B_i$ and $B_j$ are not adjacent in $\link{C}{B}$ whenever $\size{i-j}\geq 2$.
Choose $j\in\{0,\ldots,k\}$ so that
\begin{gather}
\label{eq:levels}
\begin{aligned}
w(B_0\cup\cdots\cup B_{j-1})&\leq\tfrac{1}{2}w(B),\\
w(B_{j+1}\cup\cdots\cup B_k)&\leq\tfrac{1}{2}w(B).
\end{aligned}
\end{gather}
Suppose
\begin{equation*}
w(B_j)\leq(\tfrac{1}{2}-\beta_{n,m})w(B).
\end{equation*}
It follows from the above and \eqref{eq:levels} that
\begin{gather}
\label{eq:levels2}
\begin{alignedat}{2}
w(B_0\cup\cdots\cup B_{j-1})&=w(B)-w(B_j\cup B_{j+1}\cup\cdots\cup B_k)&&\geq\beta_{n,m}w(B),\\
w(B_{j+1}\cup\cdots\cup B_k)&=w(B)-w(B_0\cup\cdots\cup B_{j-1}\cup B_j)&&\geq\beta_{n,m}w(B).
\end{alignedat}
\end{gather}
Since there is no path in $G$ connecting $B_0\cup\cdots\cup B_{j-1}$ and $B_{j+1}\cup\cdots\cup B_k$ that avoids $A\cup B_j$, the vertex set of every component of $G\setminus(A\cup B_j)$ is disjoint from $B_0\cup\cdots\cup B_{j-1}$ or $B_{j+1}\cup\cdots\cup B_k$.
Thus \ref{item:struct-indsubdiv-1} follows for $S=A\cup B_j$:
\begin{gather*}
\begin{alignedat}{2}
w(N(A)\setminus S)-\smash[b]{\max_{C\in\calC(G\setminus S)}w(C\cap N(A))}&\geq\beta_{n,m}w(B)&\quad&\text{by \eqref{eq:levels2}}\\
&>c_{n,m}w(N(A))&\quad&\text{by \eqref{eq:singlecomp} and \eqref{eq:struct-ind-c}}.
\end{alignedat}
\end{gather*}
It remains to consider the case
\begin{equation}
\label{eq:level}
w(B_j)>(\tfrac{1}{2}-\beta_{n,m})w(B).
\end{equation}

Every vertex in $B_j$ is reachable in $C$ from $v_0$ by a path avoiding all other vertices from $B_j$.
Let $A'$ be the set of vertices of $G$ reachable in $G$ from $A$ by a path entirely disjoint from $B_j$.
It follows that $A\subset A'\subset V(G)\setminus B_j$ and
\begin{equation}
\label{eq:N(A')}
N(A')=B_j=N(A)\setminus A'.
\end{equation}
Since $G[A]$ is connected, $G[A']$ is connected too.
By the induction hypothesis applied to $G$ and the proper subset $A'$ of $V(G)$, at least one of the following holds:
\begin{enumerate'}
\item\label{item:struct-indsubdiv-1'} There is a connected set $S\subset V(G)$ such that $A'\subset S$ and
\begin{equation*}
w(B_j\setminus S)-\max_{C\in\calC(G\setminus S)}w(C\cap B_j)\geq c_{n,m-1}w(B_j).
\end{equation*}
\item\label{item:struct-indsubdiv-2'} There is a vertex $v\in B_j$ such that
\begin{equation*}
w(v)\geq c_{n,m-1}w(B_j).
\end{equation*}
\item\label{item:struct-indsubdiv-3'} There is a graph $H'$ with $n$ vertices and $m-1$ edges such that $V(H')\subset B_j$ and $G\setminus A'$ contains a subdivision of $H'$.
\end{enumerate'}
Moreover, we have
\begin{gather}
\label{eq:subdiv-aux}
\begin{alignedat}{2}
c_{n,m-1}w(B_j)&>\beta_{n,m}w(B)&\quad&\text{by \eqref{eq:level} and \eqref{eq:struct-ind-beta}}\\
&>c_{n,m}w(N(A))&\quad&\text{by \eqref{eq:singlecomp} and \eqref{eq:struct-ind-c}}.
\end{alignedat}
\end{gather}
If \ref{item:struct-indsubdiv-1'} holds, then by \eqref{eq:N(A')} we have $B_j\setminus S=N(A)\setminus S$ and $V(C)\cap B_j=V(C)\cap N(A)$ for every $C\in\calC(G\setminus S)$.
This and \eqref{eq:subdiv-aux} imply \ref{item:struct-indsubdiv-1} for the same set $S$.
If \ref{item:struct-indsubdiv-2'} holds, then by \eqref{eq:subdiv-aux} we have \ref{item:struct-indsubdiv-2} for the same vertex $v$.
So suppose that \ref{item:struct-indsubdiv-3'} holds.

Let $u$ and $v$ be any two vertices of $H'$ such that $uv\notin E(H')$.
To prove \ref{item:struct-indsubdiv-3}, we show that $G\setminus A$ contains a subdivision of the graph $H$ with $V(H)=V(H')$ and $E(H)=E(H')\cup\{uv\}$.
Let $F'$ be a subdivision of $H'$ in $G\setminus A'$ claimed by \ref{item:struct-indsubdiv-3'}.
Since $u,v\in B_j$, the vertices $u$ and $v$ are reachable in $G\setminus A$ from $v_0$ by paths $P_u$ and $P_v$, respectively, avoiding all other vertices from $B_j$.
Let $P$ be the path connecting $u$ and $v$ in $P_u\cup P_v$.
Since $v_0\in N(A)$, it follows from the definition of $A'$ that $V(P_u)\setminus\{u\}\subset A'$ and $V(P_v)\setminus\{v\}\subset A'$, and thus $V(P)\setminus\{u,v\}\subset A'$.
In particular, $P$ is internally disjoint from $F'$.
This shows that $F'\cup P$ is a subdivision of $H$ in $G\setminus A$.
\end{proof}

\begin{corollary}[Theorem \ref{thm:intro-struct} rephrased]
\label{cor:struct-subdiv}
For every\/ $n\in\setN^+$, there is\/ $c_n\in(0,1]$ such that every weighted connected graph\/ $G$ containing no subdivision of\/ $K_n$ satisfies at least one of the following conditions:
\begin{enumerate}
\item\label{item:struct-subdiv-1} There is a connected set\/ $S\subset V(G)$ such that
\begin{equation*}
\balance(G\setminus S)\geq c_nw(G).
\end{equation*}
\item\label{item:struct-subdiv-2} There is a set\/ $S\subset V(G)$ such that\/ $\link{G}{S}$ is a cycle and
\begin{equation*}
w(S)\geq c_nw(G).
\end{equation*}
\end{enumerate}
\end{corollary}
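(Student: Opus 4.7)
The proof plan is to combine Corollary \ref{cor:struct-full}, which dichotomizes arbitrary weighted connected graphs into three structural alternatives, with Lemma \ref{lem:struct-indsubdiv}, whose third alternative is a subdivision. The point is that for $m=\binom{n}{2}$ the lemma's third alternative produces a subdivision of $K_n$, which is forbidden by hypothesis, so only the first two alternatives of the lemma remain, and these can be matched directly against the two target conclusions.

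I would start by applying Corollary \ref{cor:struct-full} to $G$ with its constant $c'$. Outcomes (1) and (3) there are verbatim target outcomes (1) and (2), so the only nontrivial case is outcome (2): a connected set $A\subset V(G)$ with $w(N(A))\geq c'w(G)$. Since $w(G)>0$ may be assumed, $A$ is a proper non-empty subset of $V(G)$, so Lemma \ref{lem:struct-indsubdiv} applies. Taking $m=\binom{n}{2}$, outcome (3) of the lemma would exhibit a subdivision of $K_n$ inside $G\setminus A\subset G$, contradicting the hypothesis, so only outcomes (1) and (2) of the lemma survive.

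If the lemma yields outcome (1), with a connected $S\supset A$, then writing $C^*$ for a heaviest component of $G\setminus S$, the estimate
\[
\balance(G\setminus S)=\sum_{C\neq C^*}w(C)\geq\sum_{C\neq C^*}w(C\cap N(A))=w(N(A)\setminus S)-w(C^*\cap N(A))\geq w(N(A)\setminus S)-\max_{C}w(C\cap N(A))
\]
bounds the left-hand side below by $c_{n,\binom{n}{2}}w(N(A))\geq c'c_{n,\binom{n}{2}}w(G)$, which is target outcome (1). If the lemma yields outcome (2), with a vertex $v\in N(A)$ of weight $w(v)\geq c_{n,\binom{n}{2}}w(N(A))$, set $S=\{v\}$: by the convention adopted at the end of Section \ref{sec:background}, the single-vertex graph $\link{G}{S}$ is a cycle of length $1$, and $w(S)=w(v)\geq c'c_{n,\binom{n}{2}}w(G)$ gives target outcome (2).

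Setting $c_n=c'\cdot c_{n,\binom{n}{2}}$ closes the argument, and the recursion for $c_{n,m}$ in the proof of Lemma \ref{lem:struct-indsubdiv} produces the claimed $\Omega(2^{-\binom{n}{2}}n^{-1})$ dependence. The main obstacle I anticipate is outcome (2) of Corollary \ref{cor:struct-full}, because a connected set with heavy neighborhood does not directly translate into either target structure; Lemma \ref{lem:struct-indsubdiv} is precisely the bridge that the forbidden-subdivision hypothesis is designed to activate, and the slightly surprising move is to use the nonstandard ``single vertex is a cycle'' convention to absorb a heavy isolated vertex as a degenerate instance of target outcome (2).
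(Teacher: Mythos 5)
Your proposal is correct and follows essentially the same route as the paper: apply Corollary \ref{cor:struct-full}, handle the heavy-neighborhood case via Lemma \ref{lem:struct-indsubdiv} with $m=\binom{n}{2}$ (whose subdivision alternative is killed by the hypothesis), convert its first outcome using the same component-wise weight estimate, and absorb its second outcome via the length-one cycle convention, with $c_n=c'c_{n,\binom{n}{2}}$ exactly as in the paper.
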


\begin{proof}
Fix $n\in\setN^+$.
We show that it is enough to set $c_n=c'c''_n$, where $c'$ is the constant claimed by Corollary \ref{cor:struct-full}, and $c''_n$ is the constant claimed by Lemma \ref{lem:struct-indsubdiv} for $m=\tbinom{n}{2}$.

Let $G$ be a weighted connected graph containing no subdivision of $K_n$.
It follows from Corollary \ref{cor:struct-full} that \ref{item:struct-subdiv-1} or \ref{item:struct-subdiv-2} holds or there is a connected set $A\subset V(G)$ such that
\begin{equation*}
w(N(A))\geq c'w(G).
\end{equation*}
In the latter case, by Lemma \ref{lem:struct-indsubdiv}, at least one of the following holds:
\begin{enumerate'}
\item\label{item:struct-subdiv-1'} There is a connected set $S\subset V(G)$ such that $A\subset S$ and
\begin{equation*}
w(N(A)\setminus S)-\max_{C\in\calC(G\setminus S)}w(C\cap N(A))\geq c''_nw(N(A))\geq c_nw(G).
\end{equation*}
\item\label{item:struct-subdiv-2'} There is a vertex $v\in N(A)$ such that
\begin{equation*}
w(v)\geq c''_nw(N(A))\geq c_nw(G).
\end{equation*}
\end{enumerate'}
The third case of Lemma \ref{lem:struct-indsubdiv} is excluded by the assumption that $G$ contains no subdivision of $K_n$.
If \ref{item:struct-subdiv-1'} holds, then \ref{item:struct-subdiv-1} follows for the same set $S$:
\begin{gather*}
\begin{aligned}
\balance(G\setminus S)&=w(G\setminus S)-\max_{C\in\calC(G\setminus S)}w(C)\\
&\geq w(N(A)\setminus S)-\max_{C\in\calC(G\setminus S)}w(C\cap N(A))\\
&\geq c_nw(G).
\end{aligned}
\end{gather*}
If \ref{item:struct-subdiv-2'} holds, then \ref{item:struct-subdiv-2} follows for $S=\{v\}$.
\end{proof}

Solving the recurrence formula for $c_{n,m}$ in the proof of Lemma \ref{lem:struct-indsubdiv} gives $c_{n,m}=\Theta(2^{-m}n^{-1})$.
Consequently, the formula for $c_n$ in the proof of Corollary \ref{cor:struct-subdiv} gives $c_n=\Theta(2^{-\smash{\binom{n}{2}}}n^{-1})$.

\section{Strategies}
\label{sec:strategies}

This final section is devoted to the proof of Theorem \ref{thm:intro-game}.
Namely, for a suitable constant $c_n>0$, we show that Alice can secure at least $c_nw(G)$ in the graph sharing game played on any weighted connected graph $G$ with an odd number of vertices and with no subdivision of $K_n$.
For the entire section, we assume that $G$ is a fixed weighted connected graph with vertex set $V$.
The additional conditions that $G$ has no subdivision of $K_n$ or $\size{V}$ is odd will be explicitly stated wherever they are required.

We call a set $S\subset V$ \emph{sparse} if the distance in $G$ between any two vertices in $S$ is at least $3$.
Equivalently, $S$ is sparse if the closed neighborhoods of the vertices in $S$ are pairwise disjoint.
We call $G$ \emph{sparsely weighted} if the set of vertices of $G$ with positive weight is sparse.

First, we prove that Alice has a strategy to gather at least $c_nw(G)$ if $G$ is a sparsely weighted graph with an odd number of vertices and with no subdivision of $K_n$, for a suitable constant $c_n>0$.
This strategy can be as well applied when $G$, instead of being sparsely weighted, contains a sparse set of vertices with substantial weight (at least a constant proportion of $w(G)$).
Then, to prove the theorem for any graph $G$ with an odd number of vertices and with no subdivision of $K_n$, we present complementary strategies of Alice that work when no sparse set of vertices has substantial weight.

\subsection{Strategies on sparsely weighted graphs}

The idea behind the proof of Theorem \ref{thm:intro-game} for sparsely weighted graphs is to devise a strategy for each of the two cases resulting from Corollary \ref{cor:struct-subdiv}.
The following lemma is the heart of the strategy for the case \ref{item:struct-subdiv-1}.

\begin{lemma}[{\cite[Lemma 3.1]{MiW12}}]
\label{lem:strat-comp}
Assume\/ $\size{V}$ is odd.
Consider an intermediate position in the graph sharing game on\/ $G$ at which a set\/ $T$ of vertices has been taken and Alice is to move (\/$\size{T}$ is even).
Starting from that position, Alice has a strategy to collect vertices of total weight at least\/ $\tfrac{1}{2}\balance(G\setminus T)$.
\end{lemma}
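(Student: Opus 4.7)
Let $C^*$ be a heaviest component of $G\setminus T$ and set $R=V(G)\setminus(T\cup V(C^*))$, so that $w(R)=\balance(G\setminus T)$. Since $|T|$ is even and $|V(G)|$ is odd, $|V(G)\setminus T|$ is odd, so Alice makes one more move than Bob in what remains of the game. I plan to prove the lemma by induction on $|V(G)\setminus T|$; the base case $|V(G)\setminus T|=1$ is immediate, since then $\balance(G\setminus T)=0$.

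For the inductive step, Alice chooses an opening vertex $v\in N(T)\setminus T$ and Bob responds with some $u\in N(T\cup\{v\})\setminus(T\cup\{v\})$. The inductive hypothesis applied at the position $T\cup\{v,u\}$, whose taken set has even size and whose remainder has odd size, guarantees Alice a future gain of at least $\tfrac{1}{2}\balance(G\setminus(T\cup\{v,u\}))$. Her overall total is therefore at least $w(v)+\tfrac{1}{2}\balance(G\setminus(T\cup\{v,u\}))$, and the task becomes choosing $v$ so that for every legal response $u$ this quantity is at least $\tfrac{1}{2}\balance(G\setminus T)$. Unpacking $\balance(G\setminus S)=w(G\setminus S)-\max_{C\in\calC(G\setminus S)}w(C)$, the inequality reduces to the weight comparison
\begin{equation*}
w(v)+w(C^*)\ \geq\ w(u)+H,
\end{equation*}
where $H$ is the weight of a heaviest component of $G\setminus(T\cup\{v,u\})$.

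Alice has two natural candidate openings: (a) a vertex of maximum weight in $V(C^*)\cap N(T)$, which is nonempty because $T$ is connected and $C^*$ is a component of $G\setminus T$; and (b), when $R\cap N(T)\neq\emptyset$, a vertex of maximum weight in $R\cap N(T)$, which already credits Alice with some of the balance $w(R)$ from the outset. The plan is to let Alice pick whichever of (a) and (b) dominates, case-splitting by the component that contains Bob's response $u$. When $u\in V(C^*)$, candidate (a) is used together with the choice of $v$ as a non-cut vertex of $C^*$ adjacent to $T$ whenever available, so that $C^*\setminus\{v\}$ stays connected and remains heaviest; the piece surviving $u$ then bounds $H$ above. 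When $u$ lies in a light component $C_j$, candidate (b) is used and the required estimate follows from $w(C_j)\leq w(C^*)$ together with $w(v)\geq w(u)$.

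The main obstacle is verifying the weight comparison when Bob plays a heavy vertex that changes which component is heaviest: either a heavy neighbour of $v$ in $C^*$ (which may shatter $C^*$ into light pieces and promote some $C_j$ to heaviest) or a heavy vertex inside a light component (which similarly promotes a subcomponent). Exactly here one uses the freedom to choose between candidates (a) and (b): whenever candidate (a) fails because Bob's move inflates $H$ or $w(u)$ beyond what $w(v)$ can absorb, candidate (b) wins the comparison since opening directly in $R$ makes $w(v)$ compete with $w(u)$ on equal footing in two components of comparable weight. A short auxiliary argument covers the degenerate subcase $R\cap N(T)=\emptyset$, where every vertex of $R$ is reachable only through $V(C^*)$; in that subcase $\balance(G\setminus T)$ is controlled by the total weight available in $V(C^*)\cap N(T)$, and candidate (a) alone closes the induction.
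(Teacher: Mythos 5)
The paper itself does not prove this lemma---it is imported from \cite{MiW12}---so there is no internal argument to compare with; judged on its own, your induction has a genuine gap. The whole plan rests on finding an opening vertex $v\in N(T)$ such that \emph{every} reply $u$ of Bob satisfies $w(v)+w(C^*)\geq w(u)+H$, so that a single application of the inductive hypothesis after one Alice--Bob exchange closes the argument. No such $v$ need exist. Take $T$ to be two adjacent (already taken) vertices $t_1,t_2$, and attach to $t_1$ two paths $a\,b\,c$ and $x\,y\,z\,z'$ with $w(b)=w(y)=W>0$ and all other weights $0$. Then $\size{V}=9$ is odd, $\size{T}=2$ is even, $G\setminus T$ has two components of weight $W$ each, and the target is $\tfrac12\balance(G\setminus T)=W/2$. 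Alice's only legal openings are $a$ and $x$, both of weight $0$ (by the tie, it is irrelevant which component is labelled $C^*$). If she opens $a$, Bob replies $u=b$: the components of $G\setminus(T\cup\{a,b\})$ are $\{c\}$ and the $x$-path, so $H=W$ and $w(v)+w(C^*)=W<2W=w(u)+H$; moreover $\balance$ of the new position is $0$, so the inductive hypothesis promises Alice nothing for the rest of the game. If she opens $x$, Bob replies $u=y$, which is adjacent to $v$ but not to $T$, so the inequality $w(v)\geq w(u)$ you invoke for a reply inside a light component fails ($0$ versus $W$), and again $w(v)+w(C^*)=W<2W=w(u)+H$. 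Thus for every legal opening Bob has a reply defeating the one-step comparison, so the induction as set up cannot close; note also that Alice must commit to one opening before seeing $u$, so it does not help that for each reply \emph{some} candidate would have worked.

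The lemma is nevertheless true in this example, and seeing why shows what is missing: after $a$ and $b$ are taken, Alice takes the worthless vertex $c$---a pure waiting move---which by parity forces Bob to be the first to enter the other component, after which Alice grabs $y$ and collects $W\geq W/2$. The quantity $\tfrac12\balance$ is therefore not a potential that is preserved across a single exchange; the correct proof must exploit the odd parity over several moves (tracking who will be forced to open the next component, and letting Alice burn zero-weight vertices to pass that obligation to Bob), i.e.\ it needs a stronger inductive invariant or an explicit multi-move strategy rather than charging everything to the first round. Your candidate (a)/(b) dichotomy does not repair this, since, as the example shows, both candidates can fail simultaneously against a single adversary.
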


For the rest of this subsection, we assume that $G$ is sparsely weighted and $\size{V}$ is odd.
The problem with applying Corollary \ref{cor:struct-subdiv} and then Lemma \ref{lem:strat-comp} directly to $G$ is that when Alice is taking vertices from the separating set $S$ in order to reach a position at which the whole $S$ has been taken, Bob can steal some valuable vertices from $V\setminus S$.
This can be prevented if $S\cap N[v]=\emptyset$ for every vertex $v\in V\setminus S$ with positive weight.
To ensure the latter whenever we are in the case \ref{item:struct-subdiv-1} of Corollary \ref{cor:struct-subdiv}, we are going to contract the closed neighborhood $N[v]$ of every vertex $v$ with positive weight, thus obtaining a $1$-shallow minor $G^R$ of $G$, and apply Corollary \ref{cor:struct-subdiv} to $G^R$ instead of $G$.

Let $V^+$ denote the set of vertices of $G$ with positive weight.
Hence $V^+$ is sparse.
Define
\begin{equation*}
V^R=\bigl\{N[v]\colon v\in V^+\bigr\}\cup\bigl\{\{v\}\colon v\in V\setminus N[V^+]\bigr\}.
\end{equation*}
It follows that $V^R$ is a partition of $V$ into $1$-shallow subsets.
Define
\begin{equation*}
G^R=G/V^R.
\end{equation*}
Thus $G^R$ is a $1$-shallow minor of $G$.
It is weighted by the weight function $w$ inherited from $G$ as follows: for $v^R\in V^R$, we have
\begin{equation*}
w(v^R)=\begin{cases}
w(v)&\text{if }v^R=N[v]\text{ for some }v\in V^+,\\
0&\text{if }v^R=\{v\}\text{ for some }v\in V\setminus N[V^+].
\end{cases}
\end{equation*}
In particular, we have $w(G^R)=w(G)$.

\begin{lemma}
\label{lem:strat-comp-R}
For every set\/ $S^R\subset V^R$ that is connected in\/ $G^R$, Alice has a strategy in the graph sharing game on\/ $G$ to collect vertices of total weight at least\/ $\tfrac{1}{2}\balance(G^R\setminus S^R)$.
\end{lemma}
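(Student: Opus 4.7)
The plan is to reduce to Lemma~\ref{lem:strat-comp} via a two-phase strategy that first secures the vertex set $S := \bigcup S^R$ before invoking Lemma~\ref{lem:strat-comp}. Two preliminary facts pin down the reduction. First, because each piece of $V^R$ is connected in $G$ and $S^R$ is connected in $G^R$, the induced subgraph $G[S]$ is connected. Second, because the sparsity of $V^+$ forces $w(N[v])=w(v)$ for every $v \in V^+$, sending a component $D$ of $G \setminus S$ to the family of pieces contained in $D$ is a weight-preserving bijection between $\calC(G \setminus S)$ and $\calC(G^R \setminus S^R)$, so $\balance(G \setminus S) = \balance(G^R \setminus S^R)$.

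Alice's strategy, specified at the start of each of her turns, applies the following options in decreasing order of priority: if $S \subset T$, she plays according to Lemma~\ref{lem:strat-comp} from the current position (\emph{Phase~2}); otherwise, if some untaken $v \in V^+ \setminus S$ has a neighbor in $T$ (a \emph{threat}), she takes such a $v$ (\emph{defensive move}); otherwise she takes a vertex of $S \setminus T$ adjacent to $T$, or (on her very first move) any vertex of $S$ (\emph{grab}). A grab is always legal because $G[S]$ is connected and $T \cap S$ becomes nonempty after move~$1$.

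The crucial claim is that during Phase~1 Bob collects no positive weight outside $S$. Every neighbor of a vertex $v \in V^+ \setminus S$ has weight $0$ (sparsity of $V^+$) and lies outside $S$ (disjointness of pieces), so such a neighbor $u$ can enter $T$ only through a Bob move. Moreover, a single vertex cannot be adjacent to two distinct members of $V^+$ (again sparsity), so Bob's move creates at most one new threat, and Alice's defensive and grabbing moves themselves create no threats. These observations maintain a one-threat invariant throughout Phase~1, so whenever Bob places a neighbor $u$ of some $v \in V^+ \setminus S$ into $T$, Alice's immediately following move takes $v$ under the defensive rule, and Bob never reaches $v$ himself.

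Writing $A_1^S$ and $A_1^{\mathrm{out}}$ for Alice's Phase~1 collections from $S$ and from defensive moves respectively, the claim gives $w(T \setminus S) = A_1^{\mathrm{out}}$ at the start of Phase~2, together with $\size{T}$ even (since Phase~2 begins at the start of an Alice turn) and $S \subset T$. Since every component of $G \setminus T$ sits inside a component of $G \setminus S$, we have $\balance(G \setminus T) \geq \balance(G \setminus S) - w(T \setminus S)$, and combining with Lemma~\ref{lem:strat-comp} for the Phase~2 take yields
\[
\text{Alice's total} \;\geq\; A_1^S + A_1^{\mathrm{out}} + \tfrac{1}{2}\bigl(\balance(G^R \setminus S^R) - A_1^{\mathrm{out}}\bigr) \;\geq\; \tfrac{1}{2}\balance(G^R \setminus S^R).
\]
The main obstacle is the threat-free claim, and it is precisely here that the pairwise-distance-$3$ structure of $V^+$ (built into the definition of $V^R$) is indispensable: without it, either a single Bob move could threaten two positive-weight vertices simultaneously, or Alice's own defensive move could spawn a new threat, either of which would let Bob extract positive weight from $V \setminus S$ without Alice being able to respond in time.
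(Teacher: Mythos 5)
Your proof is correct and follows essentially the same route as the paper's: take $S=\bigcup S^R$, play a two-phase strategy that answers any threat to a positive-weight vertex outside $S$ immediately and otherwise extends the taken part of $S$, then hand over to Lemma~\ref{lem:strat-comp} once $S$ is taken, with the same weight accounting via $\balance(G\setminus T)\geq\balance(G\setminus S)-w(T\setminus S)$ and the identification $\balance(G\setminus S)=\balance(G^R\setminus S^R)$. You merely spell out the one-threat invariant and the component correspondence that the paper leaves implicit.
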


\begin{proof}
Let $S^R$ be a connected set of vertices of $G^R$.
Let $S=\tbigcup S^R\subset V$.
Clearly, $S$ is connected in $G$ and disjoint from $N[V^+\setminus S]$.
Alice starts by taking an arbitrary vertex from $S$.
Whenever Bob takes a vertex from $N(v)$ for some $v\in V^+\setminus S$, Alice answers by taking $v$.
Otherwise, unless the entire $S$ has been taken, Alice picks a next available vertex from $S$.
Now, consider Alice's first turn before which the entire $S$ has been taken.
Let $T$ be the set of vertices taken thus far.
Thus $S\subset T$.
Since all vertices in $T\setminus S$ with positive weight have been taken by Alice, she has already gathered at least $w(T\setminus S)$.
Alice continues the game with her strategy claimed by Lemma \ref{lem:strat-comp}.
This way, she is still going to take at least $\tfrac{1}{2}\balance(G\setminus T)$.
Therefore, her total outcome on $G$ is at least
\begin{gather*}
\begin{alignedat}[b]{1}
w(T\setminus S)+\tfrac{1}{2}\balance(G\setminus T)&=w(T\setminus S)+\tfrac{1}{2}w(G\setminus T)-\tfrac{1}{2}\max_{C\in\calC(G\setminus T)}w(C)\\
&\geq\tfrac{1}{2}w(G\setminus S)-\tfrac{1}{2}\max_{C\in\calC(G\setminus S)}w(C)\\
&=\tfrac{1}{2}w(G^R\setminus S^R)-\tfrac{1}{2}\max_{C^R\in\calC(G^R\setminus S^R)}w(C^R)\\
&=\tfrac{1}{2}\balance(G^R\setminus S^R).
\end{alignedat}\qedhere
\end{gather*}
\end{proof}

\begin{lemma}
\label{lem:strat-cycle-R}
For every set\/ $S^R\subset V^R$ such that\/ $\link{G^R}{S^R}$ is a cycle, Alice has a strategy in the graph sharing game on\/ $G$ to collect vertices of total weight at least\/ $\tfrac{1}{6}w(S^R)$.
\end{lemma}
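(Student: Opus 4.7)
The plan is to simulate a pizza-style cycle strategy on $\link{G^R}{S^R}$, lifting it into moves on $G$ using sparseness of $V^+$. Write $u_1^R, \ldots, u_k^R$ for the cyclic order of $S^R$ along $\link{G^R}{S^R}$ and put $W = w(S^R)$. Each $u_i^R$ is either a singleton of weight zero or the closed neighborhood $N[u_i]$ of a heavy vertex $u_i \in V^+ \cap S$, and in the latter case the whole weight $w(u_i^R)$ sits at the center $u_i$. Between consecutive $u_i^R$'s in the cycle there is a connecting path in $G$ whose internal vertices lie in $V \setminus S$.

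I would first dispose of the easy case: if any $u_i^R$ has weight at least $W/6$, Alice takes its heavy center on her opening move and is done. Assume henceforth that every $u_i^R$ has weight strictly less than $W/6$. An intermediate-value argument on the cyclic prefix sums then yields a splitter $u_{i_0}^R$ whose removal divides $S^R$ into two arcs $A^+$, $A^-$, each of weight in $[W/3, 2W/3]$. Alice opens by taking $u_{i_0}$ (its heavy center if weighted, or its lone vertex otherwise).

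From this opening Alice plays a two-fronted strategy, maintaining a connected taken region in $G$ whose two boundary fronts advance into $A^+$ and $A^-$ respectively, and balancing the fronts via the classical pizza mirror rule against Bob's moves. The crucial local primitive, enabled by sparseness, is that the moment Alice's front enters any vertex of $N[b]$ for a heavy center $b$, she can grab $b$ itself on her very next move. Using this, she runs a pairing on the heavy centers of the arc that Bob attacks more heavily: enumerate them outward from the splitter as $b_1, b_2, \ldots$, pair them as $(b_1, b_2), (b_3, b_4), \ldots$, and ensure that whenever Bob's front enters the neighborhood of $b_{2j-1}$, Alice uses the one-move primitive to grab $b_{2j}$.

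The main obstacle is the timing bookkeeping on the long, unweighted connecting paths between consecutive $u_i^R$'s. Because both players advance at the same rate on any connecting path and because sparseness places heavy centers pairwise at distance at least three, Bob cannot acquire two consecutive heavy centers without giving Alice the chance to intervene. A careful induction on the pairs shows Alice secures at least one vertex of each pair in the targeted arc, plus a head start of one pair from having committed to the heavier direction first; together this secures at least half the weight of the heavier arc. Since $\max(w(A^+), w(A^-)) \geq W/3$, this yields the required $\tfrac{1}{6} W$. Making the head-start argument airtight---especially when Bob oscillates between $A^+$ and $A^-$ or lingers on a connecting path to disrupt pairing tempo---is the most technically delicate part.
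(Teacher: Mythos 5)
There is a genuine gap, and it is structural rather than a matter of ``delicate bookkeeping.'' Your strategy never uses the standing assumption of this subsection that $\size{V}$ is odd, yet the lemma is false without it, so no parity-free argument of this kind can be completed. Concretely, let $b_1,\ldots,b_n$ form a cycle of zero-weight vertices and attach to each $b_i$ a pendant vertex $a_i$ of weight $1$. This graph is sparsely weighted, $V^R=\{N[a_i]\}_i$, $G^R$ is a cycle of total weight $n$, and taking $S^R=V^R$ makes $\link{G^R}{S^R}$ a cycle with $w(S^R)=n$; but Alice cannot collect more than $1$ (Bob answers $a_i$ or $b_i$ by the other of the pair, after which Alice is always forced onto some $b_j$ and Bob snatches $a_j$). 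Here $\size{V}=2n$ is even, which is exactly why the lemma does not apply --- and why your two-front ``pizza mirror'' plan, which would run just as well on this graph, cannot work. A second, local error: your ``crucial primitive'' is backwards. If Alice's front enters $N(b)$ for a heavy center $b$, it is then Bob's move and he simply takes $b$; the valid primitive is the reverse one (respond to Bob entering $N(b)$ by taking $b$), and indeed the correct strategy must \emph{avoid} entering $N(v_i)$ of an untaken heavy $v_i$ unless forced. Relatedly, the pairing step ``whenever Bob's front enters the neighborhood of $b_{2j-1}$, Alice grabs $b_{2j}$'' presupposes that $b_{2j}$ is adjacent to the taken region at that moment, which nothing guarantees.

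For comparison, the paper's proof is a global parity argument, not a mirroring one. After reducing to $S^R=\{N[v]\colon v\in S\}$ and handling $\size{S}\leq 6$ by taking the heaviest vertex, it partitions $V$ into blocks $A_i$ (around $N[v_i]$) and $A_{i,i+1}$ (components hanging between consecutive neighborhoods), classifies the $v_i$ by the parities of $\size{A[0,i)}$ and of $\size{A_i}$ into classes $C$ and $S_0,S_1$, and gives two strategies driven by the rules ``take from $S$ whenever possible'' and ``never enter $N(v_i)$ of an untaken $v_i\in S$ unless forced'': one secures all of $w(S_0\cap C)$ and the other at least $\tfrac{1}{2}w(S_1\cap C)$, because the oddness of $\size{V}$ makes the relevant intervals of untaken (or taken) vertices have the wrong parity at any moment Alice would be forced to expose a protected heavy vertex. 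Choosing the better of the two strategies yields $\tfrac{1}{3}w(C)\geq\tfrac{1}{6}w(S^R)$. Any repair of your proposal would have to import this kind of parity control; the arc-splitting and pairing machinery by itself cannot deliver the bound.
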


\begin{proof}
Let $S^R$ be a subset of $V^R$ such that $\link{G^R}{S^R}$ is a cycle.
We can assume without loss of generality that $S^R$ consists only of vertices of $G^R$ of the form $N[v]$ with $v\in V^+$.
Indeed, all vertices in $S^R$ of the form $\{v\}$ with $v\in V\setminus N[V^+]$ can be removed from $S^R$ without changing the weight of $S^R$ or violating the condition that $\link{G^R}{S^R}$ is a cycle.
Let
\begin{equation*}
S=\bigl\{v\in V^+\colon N[v]\in S^R\bigr\}.
\end{equation*}
It follows that
\begin{equation*}
S^R=\{N[v]\colon v\in S\}.
\end{equation*}
Let $n=\size{S}=\size{S^R}$.
If $n\leq 6$, then Alice can take the heaviest vertex in $S$ and thus gather at least $\tfrac{1}{6}w(S^R)$ with her first move.
Thus assume $n>6$.

Enumerate the vertices in $S$ as $v_0,\ldots,v_{n-1}$ in such a way that the vertices $N[v_0],\ldots,N[v_{n-1}]$ occur in this order on the cycle $\link{G^R}{S^R}$.
Let $v_n=v_0$.
Since $\link{G^R}{S^R}$ is a cycle, the neighborhood in $G^R$ of every component of $G^R\setminus S^R$ consists of either a single vertex $N[v_i]$ or two consecutive vertices $N[v_i]$ and $N[v_{i+1}]$.
Therefore, the neighborhood in $G$ of every component of $G\setminus N[S]$ is adjacent to either one set $N(v_i)$ or two consecutive sets $N(v_i)$ and $N(v_{i+1})$.
For $0\leq i<n$, let $A_i$ denote the union of $N[v_i]$ and all~components of $G\setminus N[S]$ adjacent only to $N(v_i)$, and let $A_{i,i+1}$ denote the union of all components of $G\setminus N[S]$ adjacent to both $N(v_i)$ and $N(v_{i+1})$.
The sets $A_i$ and $A_{i,i+1}$ together form a partition of $V$.
For $0\leq i\leq j<n$, define
\begin{alignat*}{2}
A(i,j)&=&&A_{i,i+1}\cup A_{i+1}\cup A_{i+1,i+2}\cup\cdots\cup A_{j-1}\cup A_{j-1,j},\\
A[i,j)&=A_i\cup{}&&A_{i,i+1}\cup A_{i+1}\cup A_{i+1,i+2}\cup\cdots\cup A_{j-1}\cup A_{j-1,j},\\
A[i,j]&=A_i\cup{}&&A_{i,i+1}\cup A_{i+1}\cup A_{i+1,i+2}\cup\cdots\cup A_{j-1}\cup A_{j-1,j}\cup A_j.
\end{alignat*}
In particular, for $0\leq i<n$, we have $A(i,i)=A[i,i)=\emptyset$ and $A[i,i]=A_i$.
Define
\begin{align*}
C_0&=\bigl\{v_i\in S\colon\size{A[0,i)}\text{ is even}\bigr\},\\
C_1&=\bigl\{v_i\in S\colon\size{A[0,i)}\text{ is odd}\bigr\}.
\end{align*}
Thus $C_0\cup C_1=S$.
Choose $C=C_0$ or $C=C_1$ so that $w(C)\geq\tfrac{1}{2}w(S)$.
It follows from the above definitions that
\begin{equation}
\label{eq:interval-even}
\text{if $v_i,v_j\in C$, then $\size{A[i,j)}$ is even, for $0\leq i<j<n$.}
\end{equation}
Define
\begin{gather}
\label{eq:S_0,S_1}
\begin{aligned}
S_0&=\bigl\{v_i\in S\colon\size{A_i}\text{ is even}\bigr\},\\
S_1&=\bigl\{v_i\in S\colon\size{A_i}\text{ is odd}\bigr\}.
\end{aligned}
\end{gather}
Thus $S_0\cup S_1=S$.
We prove the following two claims:
\begin{enumerate}
\item\label{item:cycle-1} Alice has a strategy to secure at least $w(S_0\cap C)$.
\item\label{item:cycle-2} Alice has a strategy to secure at least $\tfrac{1}{2}w(S_1\cap C)$.
\end{enumerate}
This suffices for the conclusion of the lemma: if $w(S_0\cap C)\geq\smash[b]{\tfrac{1}{3}}w(C)$, then Alice can choose a strategy claimed by \ref{item:cycle-1}, while if $w(S_1\cap C)\geq\tfrac{2}{3}w(C)$, then she can choose a strategy claimed by \ref{item:cycle-2}.

First, we present a strategy for Alice claimed by \ref{item:cycle-1}.
She starts by taking $v_0$.
Then, she sticks to the following two rules at each her turn:
\begin{itemize}
\item Always take a vertex from $S$ if any is available.
\item Never take a vertex from $N(v_i)$ for a non-taken $v_i\in S$ unless forced to.
\end{itemize}
The first rule ensures that the vertices taken from $S$ always form an interval in the cyclic order on $S$.
Suppose that at some point of the game, Alice is forced to take a vertex from $N(v_i)$ for some non-taken vertex $v_i\in S_0\cap C$.
It follows that the set of non-taken vertices is of the form $A[\ell,r]$ for some $\ell$ and $r$ with $1\leq\ell\leq r<n$ and $v_\ell,v_r\in S_0\cap C$.
Since $v_\ell,v_r\in C$, it follows from \eqref{eq:interval-even} that $\size{A[\ell,r)}$ is even.
Since $v_r\in S_0$, it follows from \eqref{eq:S_0,S_1} that $\size{A_r}$ is even.
Hence $\size{A[\ell,r]}$ is even.
On the other hand, since $G$ has an odd number of vertices and Alice is to move, the number of non-taken vertices, which is $\size{A[\ell,r]}$, is odd.
This contradiction shows that Alice is never forced to take a vertex from $N(v_i)$ for any non-taken $v_i\in S_0\cap C$, and thus Bob never gets the opportunity to take a vertex from $S_0\cap C$.
Therefore, Alice gathers the whole $S_0\cap C$.

For Alice's strategy claimed by \ref{item:cycle-2}, fix an index $j\in\{0,\ldots,n-1\}$ so that
\begin{gather}
\label{eq:intervals}
\begin{aligned}
w\bigl(\{v_0,\ldots,v_j\}\cap S_1\cap C\bigr)&\geq\tfrac{1}{2}w(S_1\cap C),\\
w\bigl(\{v_j,\ldots,v_{n-1}\}\cap S_1\cap C\bigr)&\geq\tfrac{1}{2}w(S_1\cap C).
\end{aligned}
\end{gather}
Alice starts by taking $v_j$.
Then, at each her turn, she obeys the same two rules as before.
Again, by the first rule, the vertices taken from $S$ form an interval in the cyclic order on $S$.
If Alice is never forced to take a vertex from $N(v_i)$ for any non-taken vertex $v_i\in S_1\cap C$, then Bob never gets the opportunity to take a vertex from $S_1\cap C$, so Alice takes the whole $S_1\cap C$.
Otherwise, consider the first position in the game at which Alice is forced to take a vertex from $N(v_i)$ for some non-taken vertex $v_i\in S_1\cap C$.
This is Alice's first turn after which Bob has the opportunity to take a vertex from $S_1\cap C$.
Suppose that $v_0$ and $v_{n-1}$ have not been taken yet.
Let $\ell$ and $r$ be such that $0\leq\ell<r<n$ and $v_{\ell+1},\ldots,v_{r-1}$ are the vertices taken from $S$.
Thus $v_\ell,v_r\in S_1\cap C$, and the set of all taken vertices is equal to $A(\ell,r)$.
Since $v_\ell,v_r\in C$, it follows from \eqref{eq:interval-even} that $\size{A[\ell,r)}$ is even.
Since $v_\ell\in S_1$, it follows from \eqref{eq:S_0,S_1} that $\size{A_\ell}$ is odd.
Hence $\size{A(\ell,r)}$ is odd.
On the other hand, since $A(\ell,r)$ is the set of taken vertices and Alice is to move, $\size{A(\ell,r)}$ is even.
This contradiction shows that at least one of $v_0$ and $v_{n-1}$ have been already taken and thus all $v_0,\ldots,v_j$ or all $v_j,\ldots,v_{n-1}$ have been taken.
Since all vertices from $S_1\cap C$ taken thus far have been taken by Alice, it follows from \eqref{eq:intervals} that she has gathered at least $\tfrac{1}{2}w(S_1\cap C)$.
\end{proof}

\begin{corollary}
\label{cor:strat-sparse}
For every\/ $n\in\setN^+$, there is\/ $c_n\in(0,1]$ such that if\/ $G$ is a sparsely weighted graph with an odd number of vertices and with no subdivision of\/ $K_n$, then Alice has a strategy in the graph sharing game on\/ $G$ to collect vertices of total weight at least\/ $c_nw(G)$.
\end{corollary}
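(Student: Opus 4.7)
The plan is to apply the structural dichotomy from Corollary~\ref{cor:struct-subdiv} not to $G$ itself but to the contracted graph $G^R$ defined just before Lemma~\ref{lem:strat-comp-R}, and then to feed the resulting structural object into whichever of Lemmas~\ref{lem:strat-comp-R} and \ref{lem:strat-cycle-R} is applicable. The point of passing to $G^R$ is exactly the one anticipated in the discussion preceding Lemma~\ref{lem:strat-comp-R}: a set $S$ produced by Corollary~\ref{cor:struct-subdiv} applied directly to $G$ might be very close to heavy vertices in $V \setminus S$, allowing Bob to steal weight while Alice takes over $S$, whereas $S^R \subseteq V^R$ automatically keeps $S = \bigcup S^R$ at graph-distance at least $2$ from every non-taken positive-weight vertex.

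Concretely, I first observe that $G^R$ is a $1$-shallow minor of $G$, because each part of $V^R$ is either a closed neighborhood $N[v]$ (a ball of radius $1$) or a singleton. Hence, by Theorem~\ref{thm:shallow-subdiv}, there exists $N = N(n) \in \setN^+$, depending only on $n$, such that whenever $G$ has no subdivision of $K_n$, the graph $G^R$ has no subdivision of $K_N$. Since $G$ is connected, so is $G^R$, and by construction $w(G^R) = w(G)$.

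I then apply Corollary~\ref{cor:struct-subdiv} to the weighted connected graph $G^R$ with parameter $N$, obtaining a constant $c'_N \in (0,1]$ and one of two alternatives. If there is a connected $S^R \subset V^R$ with $\balance(G^R \setminus S^R) \geq c'_N w(G^R) = c'_N w(G)$, then Lemma~\ref{lem:strat-comp-R} yields a strategy securing at least $\tfrac12 c'_N w(G)$. If instead there is $S^R \subset V^R$ with $\link{G^R}{S^R}$ a cycle and $w(S^R) \geq c'_N w(G)$, then Lemma~\ref{lem:strat-cycle-R} yields a strategy securing at least $\tfrac16 c'_N w(G)$. Taking $c_n = \tfrac16 c'_N$ then proves the corollary.

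The only step that is not completely mechanical is the passage to $G^R$, where one has to match up the various notions (weighted components, the reduction $\link{G^R}{S^R}$ being a cycle, the balance quantity $\balance$) between $G$ and $G^R$; but this matching is precisely what Lemmas~\ref{lem:strat-comp-R} and \ref{lem:strat-cycle-R} were set up to provide. The shallow-minor lemma (Theorem~\ref{thm:shallow-subdiv}) is what carries the ``forbidden $K_n$-subdivision'' hypothesis from $G$ over to $G^R$, and without it one would have no control over the structural constant $c'_N$.
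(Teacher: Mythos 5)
Your argument is correct and is essentially the paper's own proof: pass to the $1$-shallow minor $G^R$, transfer the forbidden-subdivision hypothesis via Theorem~\ref{thm:shallow-subdiv}, apply Corollary~\ref{cor:struct-subdiv} to $G^R$, and invoke Lemma~\ref{lem:strat-comp-R} or Lemma~\ref{lem:strat-cycle-R} in the respective cases, yielding the same constant $c_n=\tfrac{1}{6}c'_N$.
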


\begin{proof}
Fix $n\in\setN^+$.
By Theorem \ref{thm:shallow-subdiv}, there is $N\in\setN^+$ such that if $G$ contains no subdivision of $K_n$, then $G^R$ contains no subdivision of $K_N$.
By Corollary \ref{cor:struct-subdiv}, there is $c'_N\in(0,1]$ such that if $G^R$ contains no subdivision of $K_N$, then at least one of the following holds:
\begin{enumerate}
\item\label{item:strat-sparse-1} There is a connected set $S^R\subset V^R$ such that
\begin{equation*}
\balance(G^R\setminus S^R)\geq c'_Nw(G^R).
\end{equation*}
\item\label{item:strat-sparse-2} There is a set $S^R\subset V^R$ such that $\link{G^R}{S^R}$ is a cycle and
\begin{equation*}
w(S^R)\geq c'_Nw(G^R).
\end{equation*}
\end{enumerate}
If \ref{item:strat-sparse-1} holds, then, by Lemma \ref{lem:strat-comp-R}, Alice has a strategy in the game on $G$ to collect vertices of total weight at least
\begin{equation*}
\tfrac{1}{2}\balance(G^R\setminus S^R)\geq\tfrac{1}{2}c'_Nw(G^R).
\end{equation*}
If \ref{item:strat-sparse-2} holds, then, by Lemma \ref{lem:strat-cycle-R}, Alice has a strategy in the game on $G$ to collect vertices of total weight at least
\begin{equation*}
\tfrac{1}{6}w(S^R)\geq\tfrac{1}{6}c'_Nw(G^R).
\end{equation*}
Therefore, the conclusion of the corollary holds with $c_n=\tfrac{1}{6}c'_N$.
\end{proof}

\subsection{Greedy strategies}

A set $I\subset V$ is \emph{independent} if no two vertices in $I$ are adjacent.
We assume in this subsection that $\size{V}\geq 2$, so $I$ is never the whole $V$.
The strategy claimed by Corollary \ref{cor:strat-sparse} is enough for the proof of Theorem \ref{thm:intro-game} provided that $G$ contains a sparse set of vertices whose weight is a substantial proportion of $w(G)$.
Using Theorem \ref{thm:mader} iteratively, we can always find an independent set of weight linear in $w(G)$ in a graph $G$ with no subdivision of $K_n$, but such a heavy sparse set may not exist.
For instance, a star with weights uniformly distributed on the leaves has no subdivision of $K_3$ and no heavy sparse set.
However, this case can be easily dealt with by a greedy strategy---to always take a leaf with maximum weight.
We are going to present a family of greedy strategies and prove that they can deal with all cases of a graph $G$ to which Corollary \ref{cor:strat-sparse} cannot be applied.
These strategies are parametrized by an independent set $I$ (supposed to carry a lot of weight) and a linear ordering $\sigma$ of $V\setminus I$, and work basically as follows: take vertices from $I$ greedily or, when no vertex in $I$ is available, take the vertices from $V\setminus I$ in the order $\sigma$.

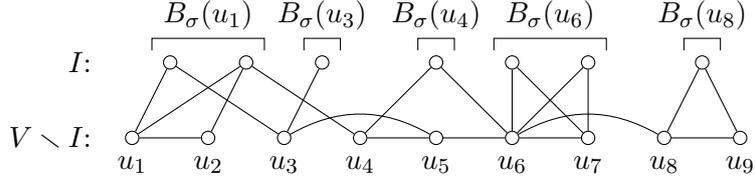
\begin{figure}[t]
\centering
\begin{tikzpicture}
  \node[left] at (0.6,0) {$V\setminus I$:};
  \node[left] at (0.6,1) {$I$:};
  \tikzstyle{every node}=[circle,draw,minimum size=5pt,inner sep=0pt]
  \tikzstyle{every label}=[rectangle,draw=none,label distance=4pt]
  \foreach\i in {1,...,9} \node[label=below:$u_\i$] (a\i) at (\i,0) {};
  \foreach\i in {1,2,3,8} \node (b\i) at (\i.5,1) {};
  \foreach\i in {5,6,7} \node (b\i) at (\i,1) {};
  \path (a1) edge (b1) edge (b2) edge (a2);
  \path (a2) edge (b2);
  \path (a3) edge (b1) edge (b3) edge[bend left=30] (a5);
  \path (a4) edge (b2) edge (b5) edge (a5);
  \path (a5) edge (a6);
  \path (a6) edge (b5) edge (b6) edge (b7) edge (a7) edge[bend left=30] (a8);
  \path (a7) edge (b6) edge (b7);
  \path (a8) edge (b8) edge (a9);
  \path (a9) edge (b8);
  \tikzstyle{every node}=[above,inner sep=3pt]
  \draw (1.26,1.16)--(1.26,1.32)--node {$B_\sigma(u_1)$}(2.74,1.32)--(2.74,1.16);
  \draw (3.26,1.16)--(3.26,1.32)--node {$B_\sigma(u_3)$}(3.74,1.32)--(3.74,1.16);
  \draw (4.76,1.16)--(4.76,1.32)--node {$B_\sigma(u_4)$}(5.24,1.32)--(5.24,1.16);
  \draw (5.76,1.16)--(5.76,1.32)--node {$B_\sigma(u_6)$}(7.24,1.32)--(7.24,1.16);
  \draw (8.26,1.16)--(8.26,1.32)--node {$B_\sigma(u_8)$}(8.74,1.32)--(8.74,1.16);
\end{tikzpicture}
\caption{A legal ordering $\sigma=(u_1,\ldots,u_9)$ of $V\setminus I$}
\label{fig:legal}
\end{figure}

Recall that if $\pi$ is a linear ordering of a set $X$ and $x\in X$, then we define
\begin{equation*}
\pi^-(x)=\bigl\{y\in X\colon y<_\pi x\bigr\}.
\end{equation*}
Fix an independent set $I\subset V$.
A linear ordering $\sigma$ of $V\setminus I$ is \emph{legal} if the following holds for every vertex $v\in V\setminus I$ other than the first one in the order~$\sigma$:
\begin{equation*}
v\in N(\sigma^-(v))\cup N\bigl(N(\sigma^-(v))\cap I\bigr).
\end{equation*}
That is, legal orderings are the orderings in which the vertices in $V\setminus I$ can be taken during the game.
For every legal linear ordering $\sigma$ of $V\setminus I$ and every $v\in V\setminus I$, define
\begin{equation*}
B_\sigma(v)=\bigl(N(v)\cap I\bigr)\setminus\bigl(N(\sigma^-(v))\cap I\bigr).
\end{equation*}
That is, $B_\sigma(v)$ is the set of vertices in $I$ that become available after $v$ has been taken, assuming that the vertices of $V\setminus I$ are being taken in the order $\sigma$.
See Figure \ref{fig:legal}.
For fixed $\sigma$, the non-empty sets $B_\sigma(v)$ form a partition of $I$.
For every legal linear ordering $\sigma$ of $V\setminus I$ and every $v\in V\setminus I$, if $B_\sigma(v)\neq\emptyset$, then choose a vertex $u_\sigma(v)$ in $B_\sigma(v)$ with maximum weight.
Since $B_\sigma(v)$ may contain several vertices of maximum weight, to achieve consistency in the choice of $u_\sigma(v)$, always select $u_\sigma(v)$ as the earliest vertex in some globally prescribed linear ordering on $I$ among the vertices from $B_\sigma(v)$ of maximum weight.
This way, for any two legal linear orderings $\sigma$ and $\sigma'$ of $V\setminus I$, if $B_\sigma(v)\supset B_{\sigma'}(v)\neq\emptyset$ and $u_\sigma(v)\neq u_{\sigma'}(v)$, then $u_\sigma(v)\notin B_{\sigma'}(v)$.
For fixed $\sigma$, the vertices $u_\sigma(v)$ are distinct.
Define
\begin{equation*}
U_\sigma=\bigl\{u_\sigma(v)\colon v\in V\setminus I\text{ and }B_\sigma(v)\neq\emptyset\bigr\}.
\end{equation*}
Note that $U_\sigma\subset I$.

\begin{lemma}
\label{lem:strat-legal}
For every independent set\/ $I\subset V$ and every legal linear ordering\/ $\sigma$ of\/ $V\setminus I$, Alice has a strategy in the graph sharing game on\/ $G$ to collect vertices of total weight at least\/ $\tfrac{1}{2}w(I\setminus U_\sigma)$.
\end{lemma}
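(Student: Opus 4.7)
The plan is to use the greedy strategy for Alice: at each of her turns, take the heaviest unclaimed vertex of $I$ adjacent to the currently taken set $T$ if any such vertex exists, and otherwise take the vertex $v_k$ with the smallest index in $\sigma$ not yet in $T$.

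For connectivity of the strategy, the only nontrivial case is when Alice takes some $v_k$ while $I\cap N(T)\setminus T=\emptyset$. A simple induction on $k$ gives the invariant $\{v_1,\dots,v_{k-1}\}\subseteq T$ at that moment, so by the legality of $\sigma$ we have $v_k\in N(\{v_1,\dots,v_{k-1}\})\cup N(N(\{v_1,\dots,v_{k-1}\})\cap I)$; any $u$ in the second set must already be in $T$, for otherwise it would witness $I\cap N(T)\setminus T\neq\emptyset$. Hence $v_k\in N(T)$, and $T\cup\{v_k\}$ remains connected.

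For the outcome bound, let $A$ denote Alice's final set of vertices. The nonempty sets $B_\sigma(v_{k_j})$ partition $I$ into blocks $B_j$ with leaders $u_j^1=u_\sigma(v_{k_j})\in U_\sigma$; order the vertices of $B_j$ as $u_j^1,u_j^2,\dots,u_j^{n_j}$ in non-increasing weight. Alice's greedy ensures her takes within each $B_j$ occur in non-increasing weight order (at her earlier turn she could have picked any later pick, so greediness forces this). In the clean noninterference case where all of $B_j$ is taken before any other block enters the picture, a routine pairing of consecutive followers $(u_j^2,u_j^3),(u_j^4,u_j^5),\dots$ gives $w(A\cap B_j)\geq\tfrac{1}{2}w(B_j\setminus\{u_j^1\})$: if Alice revealed $v_{k_j}$ and Bob took the leader, she secures the heavier element of each consecutive pair, and if Bob revealed $v_{k_j}$ or Bob passed over the leader, she takes $u_j^1$ and an odd-indexed alternating subset, with the leader's weight compensating any unfavorable pair.

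The main obstacle is the interference case, where Alice may skip block $B_j$ for a heavier vertex in block $B_{j'}$, creating a per-block deficit in $B_j$. I expect the surplus Alice gains in $B_{j'}$ (the excess over $\tfrac{1}{2}w(B_{j'}\setminus\{u_{j'}^1\})$) to cover deficits elsewhere, but making this precise requires a global argument. My plan is an inductive potential-function argument: track the quantity $\Phi=2w(A\cap T)-\sum_{j\,:\,B_j\subseteq T}w(B_j\setminus\{u_j^1\})$ as the game progresses and verify $\Phi\geq0$ throughout by case analysis on each pair of moves. The delicate case is when Alice reveals a new block $B_j$ and Bob immediately grabs a heavy follower; the key observation there is that the leader $u_j^1$ still lies in $I\cap N(T)\setminus T$ right after Bob's move, so Alice's next move takes a vertex of weight at least $w(u_j^1)$, which restores the potential.
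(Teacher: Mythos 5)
Your strategy is the same as the paper's (greedily take the heaviest available vertex of $I$, otherwise the $\sigma$-least untaken vertex of $V\setminus I$), and your connectivity argument via the invariant $\sigma^-(v_k)\subseteq T$ and legality is fine. But the quantitative bound --- the whole content of the lemma --- is not actually proved. You establish the ``clean noninterference'' case and then explicitly defer the interference case to a planned potential-function verification ($\Phi=2w(A\cap T)-\sum_{j\colon B_j\subseteq T}w(B_j\setminus\{u_j^1\})$) that you do not carry out. That verification is not routine: the subtracted terms fire only at block-completion events, so a single move of Bob can cause a large drop that must be covered by surplus accumulated much earlier, which is a global accounting rather than a ``case analysis on each pair of moves''; moreover Bob may take vertices of $V\setminus I$ out of $\sigma$-order, so a single move can reveal parts of several blocks at once and the tidy picture ``Alice reveals block $B_j$, then it is contested'' breaks down. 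As written, the proof has a genuine gap precisely at the step you flag as the main obstacle.

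The paper avoids all of this bookkeeping with a one-line charging argument, which you could adopt to close the gap: first reset the weights on $V\setminus I$ to zero (this changes neither the strategy nor the claimed bound). Then charge every vertex $u\in I$ taken by Bob to Alice's \emph{immediately preceding} move, taking some vertex $v$. If $v\in I$, then $u$ was already available at that point (since $I$ is independent, taking $v$ cannot make $u$ available), so greediness gives $w(u)\leq w(v)$. If $v\in V\setminus I$, then no vertex of $I$ was available before Alice's move, so $u$ became available through $v$; since $v$ was the $\sigma$-least untaken vertex, all of $\sigma^-(v)$ was taken, and $u$ adjacent to $\sigma^-(v)$ would have been available earlier --- hence $u\in B_\sigma(v)$ and $w(u)\leq w(u_\sigma(v))$. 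Each Alice move is charged at most once and each $v\in V\setminus I$ contributes at most $w(u_\sigma(v))$, so Bob's total is at most Alice's total plus $w(U_\sigma)$, giving Alice at least $\tfrac12\bigl(w(I)-w(U_\sigma)\bigr)\geq\tfrac12 w(I\setminus U_\sigma)$. This completely replaces the block decomposition and the potential function.
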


\begin{proof}
Let $\sigma$ be a legal linear ordering of $V\setminus I$.
We can assume without loss of generality that all vertices in $V\setminus I$ have zero weight.
The strategy of Alice goes as follows.
Start by taking from $V\setminus I$ the first vertex in the order $\sigma$.
In every subsequent move, if a vertex in $I$ is available, then take one with maximum weight.
Otherwise, take from $V\setminus I$ the first non-taken vertex in the order $\sigma$.
Such a vertex is always available, as $\sigma$ is legal.

It suffices to show that Bob's final outcome is at most Alice's outcome plus $w(U_\sigma)$.
To this end, for every vertex $u\in I$ collected by Bob, we bound $w(u)$ from above by $w(v)$ or $w(u_\sigma(v))$, where $v$ is the vertex taken by Alice in her directly preceding move.
Consider the position in the game just before Alice takes $v$.
If $v\in I$, then both $u$ and $v$ are available at this position and thus $w(u)\leq w(v)$.
Otherwise, Alice's move taking $v$ makes $u$ available, and hence $u\in B_\sigma(v)$, which implies $w(u)\leq w(u_\sigma(v))$.
\end{proof}

\subsection{Proof of Theorem \ref{thm:intro-game}}

\begin{lemma}
\label{lem:final}
For every\/ $n\in\setN^+$, there is\/ $c_n\in(0,1]$ such that if\/ $G$ contains no subdivision of\/ $K_n$, then at least one of the following conditions holds:
\begin{enumerate}
\item\label{item:final-1} There is a sparse set\/ $S\subset V$ such that\/ $w(S)\geq c_nw(G)$.
\item\label{item:final-2} There are an independent set\/ $I\subset V$ and a legal linear ordering\/ $\sigma$ of\/ $V\setminus I$ such that\/ $w(I\setminus U_\sigma)\geq c_nw(G)$.
\end{enumerate}
\end{lemma}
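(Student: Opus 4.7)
My plan is to extract a heavy independent set, carve out a sparse subset from it via a local-max argument, and in the complementary case construct the legal ordering $\sigma$. First, Mader's theorem (Theorem~\ref{thm:mader}) implies that $G$ is $d_n$-degenerate and hence properly $(d_n+1)$-colorable, so pigeonhole yields an independent set $I\subseteq V$ with $w(I)\geq w(G)/(d_n+1)$.

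Second, I build a sparse candidate inside $I$. Let $F$ be the graph on $I$ whose edges are the pairs sharing a neighbor in $V\setminus I$ (equivalently, pairs at distance exactly $2$ in $G$, as $I$ is independent). Fix a total order $<_w$ on $I$ refining the weight order by a tie-breaker, and let $M\subseteq I$ consist of the vertices strictly $<_w$-maximal in their $F$-closed neighborhood. Then $M$ is $F$-independent, hence sparse in $G$, and every $u\in I\setminus M$ has an $F$-neighbor $h(u)>_w u$ together with a common $V\setminus I$-neighbor $v(u)\in N(u)\cap N(h(u))$. If $w(M)\geq\tfrac{1}{2}w(I)$, conclusion~(1) holds with $c_n=\tfrac{1}{2(d_n+1)}$.

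Third, suppose $w(I\setminus M)>\tfrac{1}{2}w(I)$. The goal is a legal ordering $\sigma$ of $V\setminus I$ such that, for every $u\in I\setminus M$, the block $B_\sigma(v^*)$ containing $u$ (where $v^*$ is the $\sigma$-first vertex of $V\setminus I$ adjacent to $u$) also contains some $I$-vertex beating $u$ in $<_w$-order; then $u_\sigma(v^*)\neq u$, so $u\notin U_\sigma$, and $w(I\setminus U_\sigma)\geq w(I\setminus M)>\tfrac{1}{2}w(I)$. Legality is not itself restrictive: the auxiliary graph $\hat G$ on $V\setminus I$, with edges from $G$-adjacency or from a shared $I$-neighbor, is connected (no $G$-path contains two consecutive $I$-vertices since $I$ is independent), so any BFS ordering of $\hat G$ is legal.

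The chief obstacle is enforcing these block-membership conditions simultaneously across all $u\in I\setminus M$, since different $v(u)$'s may compete to be $\sigma$-first within overlapping subsets of $V\setminus I$. I would process the $u$'s in decreasing $<_w$-order and place $v(u)$ as early as possible subject to legality, guided by a $p_n$-arrangeable ordering $\pi$ of $G$ (Theorem~\ref{thm:arrangeable}). Observation~\ref{obs:arrangeable}\ref{item:arrangeable-2} caps the earlier ``second-order neighborhood'' of any vertex at $p_n^2+4p_n+2$, bounding how many earlier constraints can interfere with the current placement; whenever a desired $v(u)$ is blocked, $u$ is siphoned into an enlarged sparse subset built from the obstructing witnesses (still pairwise at distance $\geq 3$ in $G$), and the arrangeability bound ensures only a constant fraction of $w(I\setminus M)$ is lost. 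The final constant $c_n$ comes out polynomial in $d_n$ and $p_n$.
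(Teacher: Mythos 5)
Your first two steps are fine (a heavy independent set via degeneracy, and the dichotomy on the set $M$ of local maxima of the distance-$2$ graph on $I$), but the heart of the lemma is the case $w(M)<\tfrac{1}{2}w(I)$, and there your argument has a genuine gap -- indeed the stated goal of that case is not achievable in general. You want a legal $\sigma$ in which every $u\in I\setminus M$ lies in a block also containing a heavier vertex, so that $u\notin U_\sigma$. Consider one heavy vertex $h\in I$ and many light vertices $u_1,\ldots,u_m\in I$, where each $u_i$ sees $h$ through its own private middle vertex $x_i$ and the $u_i$ are pairwise at distance $4$. Then $M=\{h\}$ is light, every $u_i$ has $h$ as its only heavier witness, but $h$ lies in exactly one block of any legal $\sigma$, so all but one $u_i$ end up in $U_\sigma$ and $w(I\setminus U_\sigma)$ is tiny for \emph{every} legal ordering. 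The lemma still holds there because the $u_i$ themselves form a sparse set -- but that set is neither $M$ nor anything your construction produces; so the dichotomy ``$M$ heavy, or shield all of $I\setminus M$ by blocks'' is false, and the entire weight of the proof falls on your fallback clause.

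That fallback (``siphon blocked $u$'s into an enlarged sparse subset built from the obstructing witnesses, still pairwise at distance $\geq 3$, losing only a constant fraction'') is precisely the crux, and it is asserted rather than proved: you give no reason why two blocked vertices cannot themselves be at distance $2$, nor any charging argument bounding the lost weight. Observation~\ref{obs:arrangeable}\ref{item:arrangeable-2} does not plug this hole by itself, in part because an independent set obtained from a degeneracy coloring lacks the extra property the paper engineers for $I$ (via a coloring that separates each $v$ from $N_\pi^-(N[v])\cap\pi^-(v)$, giving: any $v$ with two neighbors in $I$ has all of them after $v$ in $\pi$), which is what bounds the number of ``bad'' middle vertices per $I$-vertex by $p_n+1$. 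The paper's actual route through this case is quite different: it takes a legal $\sigma$ \emph{minimizing} $w(U_\sigma)$, uses an exchange argument (moving a middle vertex $v$ with two neighbors in $U_\sigma$ earlier in $\sigma$) plus minimality to discard a set of small weight from $U_\sigma$, shows each remaining vertex of $U_\sigma$ has at most $p_n+1$ others at distance $2$, and then extracts a heavy sparse set by a $(p_n+2)$-coloring of the distance-$2$ graph. Without an argument of comparable substance -- some extremal choice of $\sigma$ together with a charging/exchange step that controls distance-$2$ interactions inside the surviving part of $I$ -- your proof does not go through.
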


\begin{proof}
Fix $n\in\setN^+$.
Let $p_n\in\setN$ be the constant claimed by Theorem \ref{thm:arrangeable}.
We show that it is enough to set
\begin{equation}
\label{eq:final-c}
c_n=\frac{1}{(p_n^2+4p_n+5)(p_n^2+4p_n+3)}.
\end{equation}

The proof proceeds as follows.
We fix a heavy independent set $I$ and a legal linear ordering $\sigma$ of $V\setminus I$ appropriately.
Recall that $U_\sigma\subset I$.
If $w(U_\sigma)$ is small, then we show that \ref{item:final-2} holds, otherwise we find a heavy sparse set $S\subset U_\sigma$ to satisfy \ref{item:final-1}.

Let $G$ be a weighted connected graph with vertex set $V$ and with no subdivision of $K_n$.
By Theorem \ref{thm:arrangeable}, $G$ is $p_n$-arrangeable.
Therefore, in view of Observation \ref{obs:arrangeable}, there is a linear ordering $\pi$ of $V$ with the following properties:
\begin{alignat}{2}
\label{eq:arrangeable1}
\size{N_\pi^-(v)}&\leq p_n+1&\quad&\text{for }v\in V,\\
\label{eq:arrangeable2}
\size{N_\pi^-(N[v])\cap\pi^-(v)}&\leq p_n^2+4p_n+2&\quad&\text{for }v\in V.
\end{alignat}
By \eqref{eq:arrangeable2}, we can color $V$ greedily according to the ordering $\pi$ so as to distinguish any two vertices $u$ and $v$ such that $u\in N_\pi^-(N[v])\cap\pi^-(v)$, and we use at most $p_n^2+4p_n+3$ colors to this end.
Let $I$ be a set of vertices with the same color which maximizes $w(I)$.
It follows that $I$ is independent and the following holds:
\begin{gather}
\label{eq:final-I}
\text{if $\size{N(v)\cap I}\geq 2$, then $N(v)\cap I\subset\pi^+(v)$, for $v\in V$},\\
\label{eq:final-w(I)}
w(I)\geq\frac{w(G)}{p_n^2+4p_n+3}.
\end{gather}

Let $\sigma$ be a legal linear ordering of $V\setminus I$ minimizing $w(U_\sigma)$.
If
\begin{equation*}
w(I\setminus U_\sigma)\geq(p_n^2+4p_n+3)c_nw(I),
\end{equation*}
then, by \eqref{eq:final-w(I)}, we have
\begin{equation*}
w(I\setminus U_\sigma)\geq c_nw(G),
\end{equation*}
so \ref{item:final-2} holds for $I$ and $\sigma$.
Thus assume
\begin{equation}
\label{eq:w(I-U_sigma)}
w(I\setminus U_\sigma)<(p_n^2+4p_n+3)c_nw(I).
\end{equation}
Define
\begin{alignat*}{1}
V^\star&=\bigl\{v\in V\setminus I\colon\size{N(v)\cap I}\geq 2\bigr\},\\
V_\sigma^\star&=\bigl\{v\in V\setminus I\colon\size{N(v)\cap U_\sigma}\geq 2\bigr\}\subset V^\star.
\end{alignat*}
By \eqref{eq:final-I}, we have $N(v)\cap I\subset\pi^+(v)$ for every $v\in V^\star$.
This and \eqref{eq:arrangeable1} yield
\begin{equation}
\label{eq:final-back}
\size{N(u)\cap V^\star}\leq p_n+1\quad\text{for }u\in I.
\end{equation}

The next part of the proof is to find a set $U_\sigma^\star\subset U_\sigma$ such that $\size{N(v)\cap U_\sigma^\star}\leq 2$ for every $v\in V_\sigma^\star$.
This will imply that every vertex in $U_\sigma^\star$ has at most $p_n+1$ other vertices in $U_\sigma^\star$ at distance $2$, and thus $U_\sigma^\star$ can be partitioned into at most $p_n+2$ sparse sets.

Let $v\in V_\sigma^\star$.
Let $q_\sigma^v$ be the first vertex in the order $\sigma$ such that $N(v)\cap B_\sigma(q_\sigma^v)\neq\emptyset$.
By the definition of $V_\sigma^\star$, we have $\size{N(v)\cap U_\sigma}\geq 2$, so $q_\sigma^v$ exists and $q_\sigma^v<_\sigma v$.
Let $\sigma^v$ be the linear ordering of $V(G)\setminus I$ obtained from $\sigma$ by moving $v$ to the first position after $q_\sigma^v$.
It is clear that $\sigma^v$ is legal.
Moreover, we have
\begin{gather}
\label{eq:new-B_sigma}
\begin{alignedat}{2}
B_{\sigma^v}(x)&=B_\sigma(x)&\quad&\text{for }x\in\sigma^-(q_\sigma^v)\cup\{q_\sigma^v\},\\
B_{\sigma^v}(x)&=B_\sigma(x)\setminus B_{\sigma^v}(v)&\quad&\text{for }x\in\sigma^+(q_\sigma^v)\setminus\{v\}.
\end{alignedat}
\end{gather}
Define
\begin{equation*}
Y_\sigma^v=U_\sigma\setminus U_{\sigma^v},\qquad Z_\sigma^v=U_{\sigma^v}\setminus U_\sigma.
\end{equation*}
That is, $Y_\sigma^v$ are the vertices removed from $U_\sigma$, and $Z_\sigma^v$ are the vertices added to $U_\sigma$.
Since $\sigma$ has been chosen so as to minimize $w(U_\sigma)$, we have
\begin{gather}
\notag
w(Y_\sigma^v)\leq w(Z_\sigma^v),\\
\label{eq:charging-bound}
\sum_{v\in V_\sigma^\star}w(Y_\sigma^v)\leq\sum_{v\in V_\sigma^\star}w(Z_\sigma^v)=\sum_{u\in I\setminus U_\sigma}\size{\{v\in V_\sigma^\star\colon u\in Z_\sigma^v\}}\cdot w(u).
\end{gather}
The definition of $\sigma^v$ yields $N(v)\cap B_{\sigma^v}(x)=\emptyset$ for $x\notin\{q_\sigma^v,v\}$.
Therefore,
\begin{equation}
\label{eq:two-neighbors}
N(v)\cap(U_\sigma\setminus Y_\sigma^v)\subset N(v)\cap U_{\sigma^v}\subset\{u_{\sigma^v}(q_\sigma^v),u_{\sigma^v}(v)\},
\end{equation}
That is, $v$ has at most two neighbors in $U_\sigma\setminus Y_\sigma^v$.
Let $u\in Z_\sigma^v=U_{\sigma^v}\setminus U_\sigma$.
By \eqref{eq:new-B_sigma}, one of the following holds:
\begin{itemize}
\item $u=u_{\sigma^v}(v)\in N(v)\cap I$; hence $v\in N(u)\cap V^\star$.
\item $u=u_{\sigma^v}(x)\in N(x)\cap I$ for some $x\in V\setminus I$ such that $u_\sigma(x)\in B_{\sigma^v}(v)\subset N(v)\cap I$; moreover, since $u_\sigma(x)$ and $u_{\sigma^v}(x)$ are two distinct vertices in $N(x)\cap I$, we have $x\in V^\star$; hence $x\in N(u)\cap V^\star$ and $v\in N(u_\sigma(x))\cap V^\star$.
\end{itemize}
Therefore,
\begin{gather}
\notag
\{v\in V_\sigma^\star\colon u\in Z_\sigma^v\}\subset(N(u)\cap V^\star)\cup\bigcup_{x\in N(u)\cap V^\star}(N(u_\sigma(x))\cap V^\star),\\
\label{eq:charging-times}
\begin{split}
\size{\{v\in V_\sigma^\star\colon u\in Z_\sigma^v\}}&\leq\size{N(u)\cap V^\star}+\sum_{x\in N(u)\cap V^\star}\size{N(u_\sigma(x))\cap V^\star}\\
&\leq(p_n+1)(p_n+2)\mskip 117mu\text{by \eqref{eq:final-back}}.
\end{split}\\
\label{eq:charging-total}
\begin{alignedat}{2}
\sum_{v\in V_\sigma^\star}w(Y_\sigma^v)&\leq\sum_{u\in I\setminus U_\sigma}\size{\{v\in V_\sigma^\star\colon u\in Z_\sigma^v\}}\cdot w(u)&\quad&\text{by \eqref{eq:charging-bound}}\\
&\leq(p_n+1)(p_n+2)w(I\setminus U_\sigma)&\quad&\text{by \eqref{eq:charging-times}}\\
&<(p_n^2+3p_n+2)(p_n^2+4p_n+3)c_nw(I)&\quad&\text{by \eqref{eq:w(I-U_sigma)}}.
\end{alignedat}
\end{gather}
Define
\begin{equation*}
U_\sigma^\star=U_\sigma\setminus\bigcup_{v\in V_\sigma^\star}Y_\sigma^v.
\end{equation*}
It follows that
\begin{gather}
\label{eq:w(U^star)}
\mskip-15mu\begin{alignedat}{2}
w(U_\sigma^\star)&\geq w(U_\sigma)-\sum_{v\in V_\sigma^\star}w(Y_\sigma^v)\\
&>\bigl(1-(p_n^2+3p_n+3)(p_n^2+4p_n+3)c_n\bigr)w(I)&\quad&\text{by \eqref{eq:w(I-U_sigma)} and \eqref{eq:charging-total}}\\
&=(p_n+2)(p_n^2+4p_n+3)c_nw(I)&\quad&\text{by \eqref{eq:final-c}}\\
&\geq(p_n+2)c_nw(G)&\quad&\text{by \eqref{eq:final-w(I)}}.
\end{alignedat}
\end{gather}

Fix a vertex $v\in U_\sigma^\star$.
If a vertex $u\in U_\sigma^\star$ is at distance $2$ from $v$ in $G$, then $u$ and $v$ share a neighbor $x_u\in V\setminus I$.
Since $\size{N(x_u)\cap I}\geq 2$, we have $x_u\in N(v)\cap V^\star$.
By \eqref{eq:two-neighbors}, we have $N(x_u)\cap U_\sigma^\star=\{u,v\}$.
Therefore, by \eqref{eq:final-back}, $v$ is at distance $2$ from at most $p_n+1$ other vertices in $U_\sigma^\star$.
It follows that the vertices in $U_\sigma^\star$ can be colored (greedily) with at most $p_n+2$ colors so that no two of them at distance $2$ in $G$ receive the same color.
Let $S$ be a color class with maximum weight.
It follows that $S$ is sparse in $G$ and, by \eqref{eq:w(U^star)},
\begin{equation*}
w(S)\geq\frac{w(U_\sigma^\star)}{p_n+2}>c_nw(G).
\end{equation*}
This shows that \ref{item:final-1} holds for $S$.
\end{proof}

\begin{proof}[Proof of Theorem \ref{thm:intro-game}]
Fix $n\in\setN^+$.
Let $c'_n\in(0,1]$ be the constant claimed by Lemma \ref{lem:final} and $c''_n\in(0,1]$ be the constant claimed by Corollary \ref{cor:strat-sparse}.
We show that it is enough to set
\begin{equation*}
c_n=c'_n\min\{c''_n,\tfrac{1}{2}\}.
\end{equation*}

Let $G$ be a weighted connected graph with an odd number of vertices and with no subdivision of $K_n$.
By Lemma \ref{lem:final}, at least one of the following~holds:
\begin{enumerate}
\item\label{item:intro-game-1} There is a sparse set\/ $S\subset V(G)$ such that\/ $w(S)\geq c'_nw(G)$.
\item\label{item:intro-game-2} There are an independent set\/ $I\subset V(G)$ and a legal linear ordering\/ $\sigma$ of\/ $V(G)\setminus I$ such that\/ $w(I\setminus U_\sigma)\geq c'_nw(G)$.
\end{enumerate}

Suppose that \ref{item:intro-game-1} holds.
Let $G'$ be the graph obtained from $G$ by resetting the weights of all vertices in $V(G)\setminus S$ to zero.
Hence
\begin{equation*}
w(G')=w_G(S)\geq c'_nw(G).
\end{equation*}
By Corollary \ref{cor:strat-sparse}, Alice has a strategy in the game on $G'$ to collect vertices of total weight at least $c'_nw(G')\geq c'_nc''_nw(G)\geq c_nw(G)$.
The same strategy gives Alice at least $c_nw(G)$ in the game on $G$.

If \ref{item:intro-game-2} holds, then, by Lemma \ref{lem:strat-legal}, Alice has a strategy in the game on $G$ to collect vertices of total weight at least $\tfrac{1}{2}w(I\setminus U_\sigma)\geq\tfrac{1}{2}c'_nw(G)\geq c_nw(G)$.
\end{proof}

\section*{Acknowledgments}

The paper is based on results of Adam Gągol's master's thesis \cite{Gag-master} and Bartosz Walczak's PhD thesis \cite{Wal-phd}.
We thank anonymous reviewers for valuable comments on the initial version of this paper.

\end{document}